\documentclass[a4paper,12pt]{amsart}

\usepackage{graphicx}
\usepackage{tikz-cd}
\usepackage{amssymb}
\usepackage{upquote}
\usepackage{mathtools}
\usepackage{amsthm}
\usepackage[normalem]{ulem}
\usetikzlibrary{arrows}
\usepackage{color}
\usepackage{multirow}
\usepackage{enumitem}
\usepackage{textcomp}
\usepackage{xfrac}
\usepackage{hyperref, aliascnt}
\usepackage{extarrows}
\usepackage[capitalise, nameinlink, noabbrev, nosort]{cleveref}
\usepackage{extarrows}

\hypersetup{pdfauthor={Kristin Courtney, Wilhelm Winter}, pdftitle={Nuclearity and c.p.\ inductive systems}, colorlinks=true , linkcolor=darkgray, linkbordercolor=darkgray, citecolor=darkgray, citebordercolor=darkgray, linktocpage=true}





\newtheorem{theorem}{Theorem}[section]

\newaliascnt{lemma}{theorem}

\aliascntresetthe{lemma}

\newaliascnt{proposition}{theorem}
\newtheorem{proposition}[proposition]{Proposition}
\aliascntresetthe{proposition}

\newaliascnt{corollary}{theorem}
\newtheorem{corollary}[corollary]{Corollary}
\aliascntresetthe{corollary}

\newaliascnt{claim}{theorem}

\aliascntresetthe{claim}

\newtheorem*{theorem*}{Theorem}

\theoremstyle{definition}

\newaliascnt{definition}{theorem}
\newtheorem{definition}[definition]{Definition}
\aliascntresetthe{definition}

\newtheorem*{definition*}{Definition}

\newaliascnt{notation}{theorem}

\aliascntresetthe{notation}

\newaliascnt{remark}{theorem}
\newtheorem{remark}[remark]{Remark}
\aliascntresetthe{remark}

\newaliascnt{remarks}{theorem}
\newtheorem{remarks}[remarks]{Remarks}
\aliascntresetthe{remarks}

\newaliascnt{example}{theorem}
\newtheorem{example}[example]{Example}
\aliascntresetthe{example}

\newaliascnt{examples}{theorem}
\newtheorem{examples}[examples]{Examples}
\aliascntresetthe{examples}

\newaliascnt{question}{theorem}

\aliascntresetthe{question}

\newtheorem*{corollary*}{Corollary}
\newtheorem*{proposition*}{Proposition}
\newtheorem*{remark*}{Remark}

\newtheorem*{example*}{Example}
\numberwithin{equation}{section}


\newcommand{\sbt}{\,\begin{picture}(-1,1)(-1,-2.5)\circle*{3}\end{picture}\,\,\, }

\allowdisplaybreaks

\begin{document}
\def\C{\mathbb{C}}
\def\R{\mathbb{R}}
\def\T{\mathbb{T}}
\def\N{\mathbb{N}}
\def\M{\mathrm{M}}
\def\SS{\mathcal{S}}
\def\id{\mathrm{id}}
\def\Cstar{\mathrm{C}^*}
\def\Span{\textrm{span}}
\def\CPCstar{\mathrm{CPC}^*}
\def\Limfr{\overset{\tikz \draw [arrows = {->[harpoon]}] (-1,0) -- (0,0);}{(F_n,\rho_n)}_n}
\def\Limft{\overset{\tikz \draw [arrows = {->[harpoon]}] (-1,0) -- (0,0);}{(F_n,\theta_n)}_n}
\def\Lima{\overset{\tikz \draw [arrows = {->[harpoon]}] (-1,0) -- (0,0);}{(A_n,\rho_n)}_n}
\def\Limbr{\overset{\tikz \draw [arrows = {->[harpoon]}] (-1,0) -- (0,0);}{(B_n,\rho_n)}_n}
\def\Limbt{\overset{\tikz \draw [arrows = {->[harpoon]}] (-1,0) -- (0,0);}{(B_n,\theta_n)}_n}
\def\Limbg{\overset{\tikz \draw [arrows = {->[harpoon]}] (-1,0) -- (0,0);}{(B_n,\gamma_n)}_n}
\def\Re{\mathrm{Re}}
\def\Im{\mathrm{Im}}
\def\e{\tiny{\textup{\textcircled{e}}}}
\def\h{\tiny{\textup{\textcircled{h}}}}
\def\Lim{\overset{\tikz \draw [arrows = {->[harpoon]}] (-1,0) -- (0,0);}{(F_n,\rho_n)}_n}

\title{
Images of Order Zero Maps 
}
\author[K.\ Courtney]{Kristin Courtney}
     \address{Department of Mathematics and Computer Science, University of Southern Denmark, Campusvej 55,
5230 Odense M, Denmark}
     \email{kcourtney@imada.sdu.dk}
      \author[W.\ Winter]{Wilhelm Winter}
     \address{Mathematical Institute, WWU M\"unster, Einsteinstr.\ 62, 48149 M\"unster, Germany}
     \email{wwinter@uni-muenster.de}
     \subjclass[2010]{46L05} 
 \keywords{Completely positive approximation, nuclearity, inductive limits, order zero maps, completely positive maps, NF systems, NF algebras}
 \thanks{
This research was supported by the Deutsche Forschungsgemeinschaft (DFG, German Research Foundation) under Germany's Excellence Strategy -- EXC 2044 -- 390685587, Mathematics M\"unster -- Dynamics -- Geometry -- Structure, the Deutsche Forschungsgemeinschaft (DFG, German Research Foundation) –- Project-ID 427320536 –- SFB 1442, and ERC Advanced Grant 834267 -- AMAREC}
\dedicatory{Dedicated to the memory of Eberhard Kirchberg}

\date{\today}

\begin{abstract}
We give sufficient conditions allowing one to build a $\Cstar$-algebraic structure on a self-adjoint linear subspace of a $\Cstar$-algebra in such a way that the subspace is naturally identified with the resulting $\Cstar$-algebra via a completely positive order zero map. This leads to necessary and sufficient conditions to realize a self-adjoint linear subspace of a $\Cstar$-algebra as the image of an order zero map from a $\Cstar$-algebra and to determine whether it is completely order isomorphic to a $\Cstar$-algebra via an order zero map. 
\end{abstract}

\maketitle

\tableofcontents

\section*{Introduction}
\renewcommand*{\thetheorem}{\Alph{theorem}}
\renewcommand*{\thedefinition}{\Alph{definition}}
\renewcommand*{\theproposition}{\Alph{proposition}}
\renewcommand*{\thecorollary}{\Alph{theorem}}

\noindent Completely positive order zero maps between $\Cstar$-algebras are those which preserve the $^*$-linear and matrix order structures as well as orthogonality. They were introduced in this form in \cite{WZ09},  following ideas of \cite{Win09} and \cite{KW} (a collaboration of the second named author  with Kirchberg) in the case of finite-dimensional domains, and the pioneering work  \cite{Wol94} of Wolff in a both more special and more general context. Order zero maps play a vital role in the structure and classification theory of nuclear $\Cstar$-algebras and in approximation properties related to $\Cstar$-dynamical systems, and they are a highly effective tool when passing between $\Cstar$-algebraic and von Neumann algebraic regularity properties; see  \cite{BGSS, CETWW, CSSWW, Sato2019, SSW, TWW, W12, WZ10} for examples of applications of order zero maps.

In \cite{CW1}, 
we take inspiration from the work of Blackadar and Kirchberg (cf.~ \cite{BK97,BK99,BK01,BK11}) and develop notions of inductive systems with completely positive asymptotically order zero maps which allow for inductive limit characterizations of nuclearity and of finite nuclear dimension for all separable $\Cstar$-algebras. The proofs and constructions in the first article were heavily informed by the systematic study in the present paper, and the second directly depends on it. However, applications of the results here are not limited to inductive limit constructions. One notable upshot is a characterization of when a self-adjoint linear subspace of a $\Cstar$-algebra is completely order isomorphic to a $\Cstar$-algebra \emph{via an order zero map} (\cref{Cor E}). This answers, in a special case, the question of how to detect when a self-adjoint linear subspace of a $\Cstar$-algebra is completely order isomorphic to a $\Cstar$-algebra. The order zero condition is natural in this context because it occurs in abundance (in particular for nuclear $\Cstar$-algebras; c.~f.~ \cite{CW1}), and since it yields an easy characterization of complete order embeddings which is not available for general completely positive maps (c.~f.~ \cref{a: isometric unitization} and \cref{Bestellung} below). 

\bigskip

As a consequence of the structure theorem for order zero maps, when the domain of a completely positive map is a unital $\Cstar$-algebra $A$, the map is order zero exactly when the following identity holds for all elements $a,b\in A$:
\begin{align*}\label{dager}
\theta(1) \theta(ab) = \theta(a)\theta(b).\tag{a}
\end{align*}
From this one directly obtains two necessary conditions for a space to be of the form $\theta(A)$, i.e., an image of a c.p.\ order zero map from a unital $\Cstar$-algebra: First, $\theta(1_A)$ commutes with the image, $\theta(1_A)\in \theta(A)'$, and second $\theta(A)^2=\theta(1_A)\theta(A)$. 

Our main result in this article (\cref{theorem A}) draws inspiration from the characterization \eqref{dager} to provide sufficient conditions allowing one to realize a self-adjoint linear subspace of a $\Cstar$-algebra as a (pre-)$\Cstar$-algebra (albeit likely not a sub-$\Cstar$-algebra of the ambient $\Cstar$-algebra). One of these conditions, an adaptation of the notion of a local order unit from the theory of abstract operator systems (as described in \cite[Chapter 13]{Pau02}), requires a brief introduction. It is necessary since we do not want to restrict ourselves to unital domain $\Cstar$-algebras.

\begin{definition}\label{def 1}
Let $B$ be a $\Cstar$-algebra and $X\subset B$ a self-adjoint linear subspace. We call an element $e\in B_+^1$ a \emph{local order unit} for $X$ if, for any $x\in X_{\mathrm{s.a.}}$, there exists an $R>0$ so that $Re\geq x$. We call $e$ an \emph{        order unit} for $X$ if there exists an $R>0$ so that $R\|x\|e\geq x$ for every $x\in X_{\mathrm{s.a.}}$. For an        order unit, the minimal\footnote{See Remark \ref{rmk: Order scale at least 1}(ii) for a proof that the scaling factor is in fact a minimum.} such $R$ is called the {\em scaling factor} of $e$. 
\end{definition}
The unit of a $\Cstar$-algebra is an        order unit with scaling factor $1$ for any self-adjoint linear subspace. The image of the unit under a completely positive map into another $\Cstar$-algebra is still a local order unit for the image of the domain, and when the map is bounded below it remains an        order unit, with scaling factor dependent on the norm of the inverse of the map. 

Now we are ready to state our first main result, which appears as \cref{theorem57} below.

\begin{theorem}\label{theorem A}
Let $B$ be a $\Cstar$-algebra and $X\subset B$ a self-adjoint linear subspace. Suppose there exists a local order unit $e\in B_+^1\cap X'$ for $X$ so that $X^2\subset eX$. 
Then there is an associative bilinear map ${\sbt}\colon X\times X\longrightarrow X$ satisfying
\begin{align}\label{mult id''}
    xy=e(x{\sbt} y)\tag{b}
\end{align}
for all $x,y\in X$, so that $(X,{\sbt})$ is a $^*$-algebra, with unit $e$ when $e\in X$, and there is a norm $\|\cdot\|_{\sbt}\colon X\longrightarrow [0,\infty)$ 
with respect to which $(X,{\sbt})$ is a pre-$\Cstar$-algebra. 
\end{theorem}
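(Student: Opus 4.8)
The starting point is to make sense of the product. The plan is to define $x \sbt y$ to be the unique element $z \in X$ with $ez = xy$; such a $z$ exists because $xy \in X^2 \subset eX$, and the real content is its \emph{uniqueness}, i.e.\ the injectivity statement: if $w \in X$ satisfies $ew = 0$, then $w = 0$. I would first reduce to self-adjoint $w$: writing $w = \tfrac12(w+w^*) + i\cdot\tfrac1{2i}(w-w^*)$ with both summands in $X_{\mathrm{s.a.}}$ and using that $e\in X'$ commutes with each, the self-adjoint elements $e\cdot\tfrac12(w+w^*)$ and $e\cdot\tfrac1{2i}(w-w^*)$ are the real and imaginary parts of $ew=0$, so both vanish. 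For self-adjoint $w$ the local order unit property (applied to $\pm w$) gives $R>0$ with $-Re\le w\le Re$; passing to the commutative $\Cstar$-algebra generated by the commuting self-adjoint pair $e,w$ and using Gelfand theory, $ew=0$ forces $w=0$ where $e\neq 0$, while $-Re\le w\le Re$ forces $w=0$ where $e=0$. Hence $w=0$, and $\sbt\colon X\times X\to X$ is well defined.

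With the defining relation $e(x\sbt y)=xy$ in hand, every algebraic identity reduces, via this injectivity, to a computation after multiplying by $e$. Bilinearity is immediate from uniqueness. For associativity I would compare $e^2$ times each side: since $e$ is central on $X$ one gets $e^2\big((x\sbt y)\sbt w\big)=e\,(x\sbt y)\,w = xyw = e\,x\,(y\sbt w) = e^2\big(x\sbt(y\sbt w)\big)$, and then invoke $e^2 u = 0 \Rightarrow u=0$ for $u\in X$ (which follows from the injectivity lemma, e.g.\ for self-adjoint $u$ via $\|eu\|^2=\|e^2u^2\|=0$, and in general after splitting into self-adjoint parts). For the involution I take the ambient adjoint and verify $(x\sbt y)^*=y^*\sbt x^*$ by applying $e$: $e(x\sbt y)^* = \big((x\sbt y)\,e\big)^* = (xy)^* = y^*x^* = e(y^*\sbt x^*)$. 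When $e\in X$, uniqueness applied to $e(e\sbt x)=ex$ gives $e\sbt x = x = x\sbt e$, so $e$ is the unit. This establishes the $^*$-algebra structure.

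The analytic core, which I expect to be the \textbf{main obstacle}, is the norm. The plan is to realize $(X,\sbt)$ faithfully inside a genuine $\Cstar$-algebra by ``dividing by $e$''. Let $D=\Cstar(X\cup\{e\})\subset B$; since $e\in X'$, $e$ is central in $D$, so in a faithful nondegenerate representation $D\subset B(H)$ the support projection $p=\chi_{(0,\infty)}(e)$ is central and commutes with every $x\in X$. On $pH$ the operator $e$ is injective with dense range, so $e^{-1}$ is a densely defined positive self-adjoint operator strongly commuting with each $x$, and I would set $L_x := e^{-1}x|_{pH}$ (defined first on $\bigcup_n \chi_{[1/n,\infty)}(e)H$ and extended by continuity, the spectral cut-offs keeping the unbounded $e^{-1}$ honest). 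The identities $L_xL_y=e^{-2}xy|_{pH}=e^{-1}(x\sbt y)|_{pH}=L_{x\sbt y}$ and $L_{x^*}=L_x^*$ make $L$ a $^*$-homomorphism from $(X,\sbt)$ into $B(pH)$, so $\|x\|_{\sbt}:=\|L_x\|$ is automatically submultiplicative and satisfies the $\Cstar$-identity. The crux is \emph{boundedness} of $L_x$: writing $w=x^*\sbt x\in X_{\mathrm{s.a.}}$ (note $ew=x^*x\ge 0$ forces $w\ge 0$), one computes $\|L_x\xi\|^2=\langle e^{-1}w\,\xi,\xi\rangle$, and here the local order unit hypothesis is indispensable — choosing $R>0$ with $w\le Re$ yields $0\le e^{-1}w\le R\,p$ on $pH$ and hence $\|L_x\|^2\le R<\infty$. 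Finally, $\|\cdot\|_{\sbt}$ is a genuine norm because $L$ is injective: $L_x=0$ gives $e^{-1}x=0$ on $pH$, so $xp=0$ by injectivity of $e^{-1}$, whence $x=x(1-p)$ and $x^*x=(1-p)(ew)(1-p)=0$ since $e(1-p)=0$, forcing $x=0$. Thus $L$ embeds $(X,\sbt)$ as a (generally non-closed) $^*$-subalgebra of $B(pH)$ with $\|\cdot\|_{\sbt}$ the restricted operator norm, making $(X,\sbt,\|\cdot\|_{\sbt})$ a pre-$\Cstar$-algebra.
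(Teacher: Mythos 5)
Your proposal is correct, and the algebraic half (defining $x\sbt y$ by nondegeneracy of $e$, then pushing every identity through multiplication by $e$ or $e^2$) is essentially identical to the paper's argument. Where you genuinely diverge is the construction of the pre-$\Cstar$-norm. The paper stays entirely inside $B$: it defines $\|x\|_{\sbt}=\lim_j\|h_j(e)x\|$ using the approximate inverses $h_j$ of $\id_{[0,1]}$, shows the limit exists and is a norm by a monotonicity-plus-local-order-unit argument, and then verifies submultiplicativity and the $\Cstar$-identity by hand, the key technical step being the alternative formula $\|x\|_{\sbt}=\lim_j\|h_j(e)^2ex\|$ (\cref{norm}), which requires a short $\varepsilon$-argument on the joint spectrum of $e$ and $|x|$. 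You instead pass to a faithful representation, cut to the support projection $p$ of $e$, and realize $(X,\sbt)$ as $L_x=e^{-1}x|_{pH}$; the local order unit hypothesis applied to $x^*\sbt x$ is exactly what makes the a priori unbounded $e^{-1}x$ bounded, and the $\Cstar$-axioms for $\|L_x\|$ then come for free from $B(pH)$, with faithfulness of $L$ handled by the support-projection argument. Your route buys a cleaner conceptual explanation of why a $\Cstar$-norm exists at all (the product $\sbt$ \emph{is} composition of the operators $e^{-1}x$), at the price of spectral-theoretic bookkeeping for the unbounded $e^{-1}$; the paper's route is more elementary and, importantly for the rest of the article, produces the explicit formula $\lim_j\|h_j(e)x\|$ that is reused repeatedly later (e.g.\ in \cref{equivalent norms} and \cref{completely isometric} via the operators $H_j$). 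The two norms in fact coincide, since $\|L_x\|^2=\|e^{-2}x^*x\|_{pH}=\sup_j\|h_j(e)^2x^*x\|$ on the joint spectrum, though you neither need nor claim this.
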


It follows that the completion of the $^*$-algebra $(X,\sbt)$ with respect to ${\|\cdot\|_{\sbt}}$ is a $\Cstar$-algebra, which we call the \emph{$\Cstar$-algebra associated to $(X,e)$} and denote with $\Cstar_{\sbt}(X)$. (We will make sure that suppressing the $e$ from the notation results in no ambiguity).  From the structure theorem for order zero maps (\cite[Theorem 3.3]{WZ09}) we know that an example of a self-adjoint linear subspace of a $\Cstar$-algebra $B$ satisfying the assumptions of \cref{theorem A} is the image of a c.p.c.\ order zero map $\theta\colon A\longrightarrow  B$ from a unital $\Cstar$-algebra (with $\theta(1_A)=e$). Another example comes from \cite[Section 2]{CW1}. In \cite[Proposition 2.6]{CW1}, the role of $B$ was played by $F_\infty$, $X$ was $\Lim$, and the element $[(\rho_{n+1,n}(1_{F_n}))_n]\in (F_\infty)_+^1$ was suggestively denoted by $e$. In this setting, \cite[Lemma 2.5(i) and (ii)]{CW1} established that $e\in X'$ is an        order unit for $X$ with scaling factor $1$. (In \cite[Section 2]{CW1}, the norm $\|\cdot\|_{\sbt}$ was simply the norm on $F_\infty$. We will shortly see why a scaling factor $1$ guarantees that the two norms coincide.)

If $X$ is already complete with respect to $\|\cdot\|_{\sbt}$, then $\Cstar_{\sbt}(X)$ and $X$ are equal as $^*$-vector spaces. This happens for instance when $X$ is the image of an order zero map $\theta\colon A\longrightarrow B$ from a unital $\Cstar$-algebra with $e=\theta(1_A)$ (see \cref{corollary D}) and also when $X$ is closed in $B$. (Indeed, we shall see that $X$ is closed in $B$ if and only if it is closed with respect to $\|\cdot\|_{\sbt}$ and $e$ is an        order unit for $X$, cf.\ \cref{closed uniform order unit}.) Though many of our results hold outside the setting where $X$ is complete with respect to $\|\cdot\|_{\sbt}$, our main applications and examples satisfy this assumption, and so we mostly adopt it for the current exposition.

As mentioned above, the image of a c.p.c.\ order zero map from a unital $\Cstar$-algebra satisfies the assumptions of \cref{theorem A}. 
A key feature of our study is the converse. 

\begin{theorem}\label{theorem B}
Let $B$ be a $\Cstar$-algebra and $X\subset B$ a self-adjoint linear subspace. Suppose there exists an $e\in B$ as in \emph{\cref{theorem A}}, and assume $X$ is complete with respect to the norm $\|\cdot\|_{\sbt}$. 
Then the map $\Phi\coloneqq\id_X\colon \Cstar_{\sbt}(X)\longrightarrow  B$ is c.p.c.\ order zero where $\Cstar_{\sbt}(X)$ denotes the associated $\Cstar$-algebra as above. 

When $X$ is closed in $B$, then $e$ is an        order unit and  $\Phi\coloneqq \id_X\colon \Cstar_{\sbt}(X)\\\longrightarrow  B$ is a bounded below c.p.c.\ map with c.p.\ inverse $\Phi^{-1}\colon X\longrightarrow  \Cstar_{\sbt}(X)$ such that $\|\Phi^{-1}\|$ is equal to the scaling factor of $e$. 
When $e$ has scaling factor $1$, $\Phi$ is a complete isometry.
\end{theorem}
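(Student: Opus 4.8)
The plan is to realise the identity map $\Phi$ in the factored form dictated by the structure theorem for order zero maps, namely $\Phi(a)=e\,\pi(a)$ for a suitable $^*$-homomorphism $\pi$ and the positive element $e$, and then to read off every assertion from this factorisation. First I would pass to the bidual $B^{**}$ and let $p$ be the support projection of $e$. Using that $e$ is a local order unit with $e\in X'$, every self-adjoint $x\in X$ satisfies $-Re\le x\le Re$ for some $R$; with $q=1-p$ and $c=Re-x\ge0$, compressing gives $qcq=0$, hence $qc=0$ (as $qcq=(c^{1/2}q)^*(c^{1/2}q)$), and since $qe=0$ this yields $qx=0$. Thus $X\subseteq pB^{**}p$. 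On this corner I would set $\pi(x)=\lim_{\varepsilon\downarrow0}(e+\varepsilon)^{-1/2}x(e+\varepsilon)^{-1/2}=e^{-1/2}xe^{-1/2}$, a bounded element of $pB^{**}p$ with $\|\pi(x)\|\le R$; since $e\in X'$ and $xy=e(x\sbt y)$, a short computation shows that $\pi$ is a $^*$-homomorphism $(X,\sbt)\to B^{**}$ with $e\,\pi(a)=a=\Phi(a)$ and $\pi(a\sbt b)=\pi(a)\pi(b)$. Identifying $\|\cdot\|_{\sbt}$ with $\|\pi(\cdot)\|$ (this is how the pre-$\Cstar$ norm of \cref{theorem A} is produced, and it shows at once that $\pi$ is isometric and injective), $\pi$ extends to an isometric $^*$-homomorphism, hence a complete order embedding, $\Cstar_{\sbt}(X)\hookrightarrow B^{**}$.

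Part one is then immediate from $\Phi(a)=e^{1/2}\pi(a)e^{1/2}$ with $e\ge0$ commuting with $\pi(X)$. Complete positivity follows since for $[a_{ij}]\ge0$ in $M_n(\Cstar_{\sbt}(X))$ one has $[\Phi(a_{ij})]=\mathrm{diag}(e^{1/2})\,[\pi(a_{ij})]\,\mathrm{diag}(e^{1/2})\ge0$; contractivity from $\|\Phi(a)\|\le\|e\|\,\|\pi(a)\|\le\|a\|_{\sbt}$; and orthogonality is preserved because $a\sbt b=0$ gives $\pi(a)\pi(b)=\pi(a\sbt b)=0$ and hence $\Phi(a)\Phi(b)=e^{2}\pi(a)\pi(b)=0$. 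Equivalently, $\Phi=e\pi$ is exactly the form appearing in the converse direction of \cite[Theorem 3.3]{WZ09}. Either way $\Phi$ is c.p.c.\ order zero.

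For part two I would assume $X$ closed in $B$; by \cref{closed uniform order unit} this forces $e$ to be an order unit with finite scaling factor $R_0$. The inverse factors as $\Phi^{-1}=\pi^{-1}\circ\Psi$ with $\Psi(y)=e^{-1/2}ye^{-1/2}$, and since $\pi$ is a complete order isomorphism onto its image it suffices to see $\Psi$ is completely positive: if $[y_{ij}]\ge0$ in $M_n(B)$ then $[\Psi(y_{ij})]=\mathrm{diag}(e^{-1/2})[y_{ij}]\mathrm{diag}(e^{-1/2})\ge0$. Hence $\Phi^{-1}$ is c.p.\ and $\Phi$ is bounded below (with constant $1/R_0$). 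For the norm I would first record the single inequality $R_0e\ge p_X$, where $p_X$ is the support projection of $X$ in $B^{**}$ (which commutes with $e$ because $e\in X'$): the scaling bound $\|e^{-1/2}xe^{-1/2}\|\le R_0\|x\|$ for self-adjoint $x\in X$ says precisely that the bottom of the spectrum of $e$ on $p_X$ is at least $1/R_0$. Granting this, for self-adjoint $Y\in M_n(X)$ one has $Y\le\|Y\|(p_X\otimes1)\le R_0\|Y\|(e\otimes1)$, so $\|Y\|_{\sbt}=\|(e^{-1/2}\otimes1)Y(e^{-1/2}\otimes1)\|\le R_0\|Y\|$; combined with the self-adjoint lower bound, which realises $R_0$ by minimality of the scaling factor (\cref{rmk: Order scale at least 1}), this gives $\|\Phi^{-1}\|=\|\Phi^{-1}\|_{\mathrm{cb}}=R_0$. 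Finally, when $R_0=1$ the cb-bound yields $\|y\|_{\sbt}\le\|y\|_B$ while contractivity of $\Phi$ yields $\|a\|_B\le\|a\|_{\sbt}$, so $\Phi$ is a complete isometry.

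I expect the main obstacle to be the norm computation in part two, and specifically the passage from the one-element scaling factor to the matrix levels: the single-element order-unit condition does \emph{not} by itself amplify to $M_n(X)$. The point that rescues it is the centrality of $e$ (it lies in $X'$, hence is central in $\Cstar(X,e)$), which lets me repackage the scaling factor as the one spectral inequality $R_0e\ge p_X$ and then amplify it for free by tensoring with $1_n$. Making $R_0e\ge p_X$ rigorous (a spectral, or central-decomposition, argument in $B^{**}$) and, relatedly, pinning down that the abstract norm $\|\cdot\|_{\sbt}$ of \cref{theorem A} really is $\|e^{-1/2}(\cdot)e^{-1/2}\|$ so that $\pi$ is isometric, are the two technical hearts of the proof; once the factorisation $\Phi(a)=e^{1/2}\pi(a)e^{1/2}$ is in place everything else is routine.
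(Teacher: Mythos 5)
Your first part is a legitimate alternative route: you reconstruct the converse direction of the structure theorem by exhibiting $\Phi(a)=e^{1/2}\pi(a)e^{1/2}$ with $\pi$ a $^*$-homomorphism into $B^{**}$, whereas the paper stays inside $B$, getting complete positivity from the density of $\{x^*{\sbt}x\}$ together with the implication $ex\geq 0\Rightarrow x\geq 0$ (\cref{order unit}), and the order zero property from $\|x_ny_n\|=\|e(x_n{\sbt}y_n)\|\to 0$. Your identification $\|x\|_{\sbt}=\|\pi(x)\|$ does hold (the increasing sequence $x^*h_j(e)^2x$ converges strongly to $x^*e^{-2}px$ and the norm of a strong limit of an increasing sequence is the supremum of the norms), so modulo the routine verifications you flag, part one goes through and buys a cleaner conceptual picture at the cost of working in the bidual.

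Part two, however, contains a genuine error: the inequality $R_0e\geq p_X$ is false in general, and the matrix-level norm computation built on it collapses. Take $B=C_0((0,1])$, $e=\id_{(0,1]}$, $X=\C e$. Then $e\in B^1_+\cap X\cap X'$ is an order unit with scaling factor $R_0=1$, $X^2=eX$, and $X$ is closed; but $p_X$ is the support projection of $e$ in $B^{**}$ and $e\not\geq p_X$, since $e$ has spectral values arbitrarily close to $0$ on its support. The underlying confusion is that the order unit condition $K\|x\|e\geq x$ is scale-invariant in $x$ (because of the factor $\|x\|$), so it cannot encode a spectral lower bound for $e$; indeed $X=\C e$ has scaling factor $1$ for \emph{every} positive contraction $e$. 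What the condition does give, after conjugating $\pm x\leq R_0\|x\|e$ by $e^{-1/2}$ (legitimate since $e\in X'$), is $\|x\|_{\sbt}\leq R_0\|x\|$ for \emph{self-adjoint} $x$ at matrix level one only --- your own worry that the single-element condition does not amplify is exactly right, and the proposed repackaging does not rescue it. Note also that the theorem claims only $\|\Phi^{-1}\|=R_0$, not $\|\Phi^{-1}\|_{\mathrm{cb}}=R_0$; the paper explicitly cautions in \cref{rmk: Order scale at least 1}(iii) that the scaling factor need not control matrix amplifications except when $R_0=1$. The paper's actual route is different on both counts: $\|\Phi^{-1}\|=R_0$ is obtained by reducing to positive elements via the order zero identity $\|\Phi(a)\|=\|\Phi(|a|)\|$ and the fact that injective order zero maps reflect positivity (\cref{invertible c.p. order zero and order scale}, via \cref{cor: order zero for unital}(iii)), where the order unit condition applies exactly; and the complete isometry for $R_0=1$ comes from the separate fact that an isometric c.p.\ order zero map is automatically a complete order embedding (\cref{a: isometric unitization}), not from any amplified order-unit inequality. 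You would need to substitute arguments of this kind for the step resting on $R_0e\geq p_X$.
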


This in particular tells us that not only are $X$ and $\Cstar_{\sbt}(X)$ equal as $^*$-vector spaces, they (and their matrix amplifications) also have the same positive cones, i.e., $x\in X$ is positive in $\Cstar_{\sbt}(X)$ if and only if it is positive in $B$, and the same holds for matrix amplifications.

Now combining \cref{theorem B} with the characterization \eqref{dager}  of order zero maps out of unital $\Cstar$-algebras, 
we get a characterization of images of order zero maps as well as a characterization of when a self-adjoint linear subspace of a $\Cstar$-algebra is completely order isomorphic to a unital $\Cstar$-algebra via an order zero map; cf.\ \cref{image of c.p.c. order zero} and \cref{corollary E.5} below.

\begin{corollary}\label{corollary D}
Let $X$ be a self-adjoint linear subspace of a $\Cstar$-algebra $B$ and $e\in B_+^1$. Then the following are equivalent. 
\begin{enumerate}[label=\textnormal{(\roman*)}]
    \item There exists a unital $\Cstar$-algebra $A$ and a c.p.c.\ order zero map $\theta\colon A\longrightarrow B$ with $\theta(A)=X$ and $\theta(1_A)=e$. 
    \item The element $e$ lies in $X\cap X'$ and is a local order unit for $X$ such that $X^2= eX$, and $X$ is complete with respect to $\|\cdot\|_{\sbt}$.
\end{enumerate}
\end{corollary}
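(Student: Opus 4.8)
The plan is to prove Corollary D by assembling the pieces already in place, treating $(i)\Rightarrow(ii)$ and $(ii)\Rightarrow(i)$ separately. The forward implication $(i)\Rightarrow(ii)$ should follow essentially from the structure theory of order zero maps. Given a c.p.c.\ order zero $\theta\colon A\to B$ with $\theta(1_A)=e$ and $\theta(A)=X$, I would first record that $e=\theta(1_A)\in B_+^1$ and that $e\in X$ automatically. Using the identity \eqref{dager}, namely $\theta(1)\theta(ab)=\theta(a)\theta(b)$ for all $a,b\in A$, I would derive the two necessary conditions mentioned in the introduction: that $e\in X'$ (by applying \eqref{dager} with the roles of $a,b$ swapped and using commutativity in the unital corner supplied by the structure theorem), and that $X^2=eX$ (the inclusion $X^2\subseteq eX$ is \eqref{dager} applied to $ab$, and $eX\subseteq X^2$ comes from $eX=\theta(1_A)\theta(A)=\theta(A)\subseteq X^2$ once one notes $\theta(a)=\theta(1_A)\theta(a)$ for a unital domain via functional calculus on the supporting homomorphism). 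That $e$ is a local order unit for $X$ is exactly the statement recorded in the discussion after \cref{def 1}, since the image of the unit under a c.p.\ map is a local order unit. Finally, completeness of $X$ with respect to $\|\cdot\|_{\sbt}$ is asserted in the excerpt to hold precisely in this situation (the sentence preceding \cref{theorem B} states that $X=\theta(A)$ is complete with respect to $\|\cdot\|_{\sbt}$), so I would cite the relevant computation identifying $\|\cdot\|_{\sbt}$ with the quotient/operator norm transported from $A$.

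For the converse $(ii)\Rightarrow(i)$, I would feed the hypotheses straight into \cref{theorem A} and \cref{theorem B}. Condition $(ii)$ gives a local order unit $e\in X\cap X'$ with $X^2=eX$ (in particular $X^2\subseteq eX$), so \cref{theorem A} produces the $^*$-algebra structure $(X,{\sbt})$ and the norm $\|\cdot\|_{\sbt}$ making it a pre-$\Cstar$-algebra, with $e\in X$ serving as the unit. Since we assume $X$ is complete with respect to $\|\cdot\|_{\sbt}$, the associated $\Cstar$-algebra $\Cstar_{\sbt}(X)$ coincides with $X$ as a $^*$-vector space and is unital with unit $e$. Then \cref{theorem B} tells us that $\Phi=\id_X\colon \Cstar_{\sbt}(X)\to B$ is a c.p.c.\ order zero map. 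Setting $A\coloneqq \Cstar_{\sbt}(X)$ and $\theta\coloneqq\Phi$, we obtain a unital $\Cstar$-algebra together with a c.p.c.\ order zero map whose image is $X$ and which sends $1_A=e$ to $e$, establishing $(i)$.

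The main obstacle I anticipate is the careful bookkeeping in the forward direction, specifically verifying $e\in X'$ and the equality $X^2=eX$ directly from \eqref{dager} without circular appeal to \cref{theorem A}, and confirming that the norm $\|\cdot\|_{\sbt}$ defined abstractly in \cref{theorem A} agrees with the norm inherited from the order zero map so that completeness genuinely transfers. The commutation $e\in X'$ is the subtlest point: from \eqref{dager} one has $\theta(1)\theta(ab)=\theta(a)\theta(b)$ and $\theta(1)\theta(ba)=\theta(b)\theta(a)$, and one must leverage the supporting $^*$-homomorphism $\pi$ from the structure theorem, under which $\theta(a)=\theta(1)\pi(a)=\pi(a)\theta(1)$ with $\theta(1)$ central in the relevant corner, to conclude $e\theta(a)=\theta(a)e$ for all $a$. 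I would isolate this as the key lemma and handle it via the structure theorem for order zero maps, after which both remaining necessary conditions and the norm identification follow by routine manipulation.
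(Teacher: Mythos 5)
Your route coincides with the paper's own: for (ii)$\Rightarrow$(i) you feed the hypotheses into \cref{theorem A} and \cref{theorem B} (i.e.\ \cref{theorem57} and \cref{prop 61}) exactly as the paper does, obtaining the unital $\Cstar$-algebra $A=\Cstar_{\sbt}(X)$ with unit $e$ and the c.p.c.\ order zero map $\theta=\Phi=\id_X$; and for (i)$\Rightarrow$(ii) you extract $e\in X\cap X'$, $X^2=eX$, and the local order unit property from the structure theorem, then identify $\|\cdot\|_{\sbt}$ with the norm transported from $A$ to get completeness --- which is precisely the content of Example \ref{prop52.5}(ii) and \cref{X=Xbullet} that the paper cites. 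Two blemishes in the forward direction should be repaired. First, the identity $\theta(a)=\theta(1_A)\theta(a)$ you invoke for $eX\subseteq X^2$ is false: it would force $h=\theta(1_A)$ to satisfy $h=h^2$, which already fails for $\theta=\frac{1}{2}\id_{\C}$. Fortunately the containment is trivial without it: since $e=\theta(1_A)\in X$ and $X^2=\{xy\mid x,y\in X\}$, one has $eX\subseteq X^2$ on the nose, while $X^2\subseteq eX$ is \eqref{dager}, as you say. (Similarly, $e\in X'$ needs no appeal to the supporting homomorphism: put $b=1_A$ in \eqref{dager} to get $\theta(1_A)\theta(a)=\theta(a)\theta(1_A)$ directly.) Second, completeness of $X$ in $\|\cdot\|_{\sbt}$ is the one genuinely non-trivial step, and you should not source it from the sentence preceding \cref{theorem B} in the introduction --- that sentence itself refers back to \cref{corollary D}, so the citation is circular. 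The honest argument, which your ``quotient norm'' remark correctly anticipates, is that $\eta\coloneqq\id_{\theta(A)}\circ\theta\colon A\longrightarrow\Cstar_{\sbt}(\theta(A))$ is a surjective $^*$-homomorphism (because $e\theta(ab)=\theta(a)\theta(b)=e(\theta(a){\sbt}\theta(b))$ and nondegeneracy of $e$ give $\theta(ab)=\theta(a){\sbt}\theta(b)$), and images of $^*$-homomorphisms between $\Cstar$-algebras are closed; this is exactly \cref{X=Xbullet}. With those two points fixed, the proof is correct and is the paper's proof.
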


\begin{corollary}\label{Cor E}
Let $B$ be a $\Cstar$-algebra and $X\subset B$ a self-adjoint linear subspace. Then $X$ is completely order isomorphic to a unital $\Cstar$-algebra via a c.p.\ order zero map if and only if $X$ is closed in $B$ and there exists an        order unit $e\in B_+^1\cap X\cap X'$ for $X$ with scaling factor $1$ so that $X^2=eX$. 
\end{corollary}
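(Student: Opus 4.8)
The plan is to prove the two implications separately, leaning on \cref{theorem A}, \cref{theorem B}, \cref{corollary D}, the characterization of closedness via $\|\cdot\|_{\sbt}$ (\cref{closed uniform order unit}), and the lower bound on scaling factors (\cref{rmk: Order scale at least 1}).

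For the backward implication I would start from the hypothesised order unit $e \in B_+^1 \cap X \cap X'$ of scaling factor $1$ with $X^2 = eX$ and $X$ closed. Since $e$ is in particular a local order unit in $B_+^1 \cap X'$ with $X^2 \subseteq eX$, the hypotheses of \cref{theorem A} are met, so $(X,\sbt)$ is a pre-$\Cstar$-algebra, unital with unit $e$ because $e \in X$. As $X$ is closed in $B$ and $e$ is an        order unit, \cref{closed uniform order unit} shows $X$ is complete for $\|\cdot\|_{\sbt}$, whence $\Cstar_{\sbt}(X) = X$ as $^*$-vector spaces and is a unital $\Cstar$-algebra. Now the scaling-factor-$1$ case of \cref{theorem B} yields that $\Phi = \id_X\colon \Cstar_{\sbt}(X) \to B$ is a completely isometric c.p.c.\ order zero map with completely positive inverse. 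Both $\Phi$ and $\Phi^{-1}$ being c.p., the map $\Phi$ is a complete order isomorphism of the unital $\Cstar$-algebra $\Cstar_{\sbt}(X)$ onto $X$ implemented by a c.p.\ order zero map, which is exactly the left-hand condition.

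For the forward implication, suppose $\psi\colon A \to B$ is a c.p.\ order zero map onto $X = \psi(A)$ that is a complete order isomorphism, $A$ unital. First I would note that $X$ is closed: both $\psi$ and $\psi^{-1}$ are completely positive, hence bounded, so $\psi$ is a Banach-space isomorphism of the complete space $A$ onto $(X,\|\cdot\|_B)$, making $X$ complete in the $B$-norm and therefore closed in $B$. Rescaling, I may assume $\psi$ is c.p.c.\ and set $e_0 := \psi(1_A) \in B_+^1$. By \cref{corollary D}, the implication from (i) to (ii), I obtain $e_0 \in X \cap X'$, that $e_0$ is a local order unit with $X^2 = e_0 X$, and that $X$ is $\|\cdot\|_{\sbt}$-complete; combined with closedness, \cref{theorem B} upgrades $e_0$ to an        order unit whose scaling factor equals $\|\psi^{-1}\|$.

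The remaining and genuinely delicate step is to exhibit an order unit of scaling factor exactly $1$, since $\|\psi^{-1}\|$ may exceed $1$ (already for $X = \C \subset \C$ with $\psi = \tfrac12\,\id$). Here the full strength of \emph{isomorphism}, rather than $X$ merely being an order-zero image, enters. I would take $e := \sup\{x \in X_{\mathrm{s.a.}} : \|x\|_B \le 1\}$, the supremum computed in $B^{**}$; writing $\psi(a) = e_0\,\sigma(a)$ by the structure theorem for order zero maps, with $\sigma$ an injective $^*$-homomorphism and $e_0$ a positive element central on $\sigma(A)$ and bounded below by the bounded-belowness of $\psi$, this is $e = \psi(d_0)$ with $d_0 := \sup\{a \in A_{\mathrm{s.a.}} : \|\psi(a)\|_B \le 1\}$. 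Since $\|\psi(1_A)\|_B = \|e_0\| \le 1$ one has $d_0 \ge 1_A$, so $d_0$ is invertible. Granting $e \in X$, its centrality follows because the defining constraint is invariant under the inner automorphisms $\mathrm{Ad}\,u$ of $A$ (as $e_0$ commutes with $\sigma(A)$), forcing $d_0$, hence $e = e_0\sigma(d_0)$, to commute with $X = e_0\sigma(A)$; moreover $\|e\|_B \le 1$ and $\|x\|_B\,e \ge x$ hold by construction, so the scaling factor is $1$ (it is $\ge 1$ by \cref{rmk: Order scale at least 1} and $R=1$ is realized), while $eX = e_0^2\,\sigma(d_0 A) = e_0^2\,\sigma(A) = e_0 X = X^2$ using invertibility of $d_0$. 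Thus the right-hand conditions hold. I expect the main obstacle to be precisely the verification that this supremum is attained inside $X$, equivalently that the $B$-norm unit ball of $X_{\mathrm{s.a.}}$ is an order interval $[-e,e]$ with $e \in X$; this is where one must use the completeness of $X$, the lower bound on $e_0$, and the identification of $A$ with $\sigma(A)$, rather than just the order-zero structure.
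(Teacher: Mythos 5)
Your backward implication is correct and is essentially the paper's own argument: \cref{theorem57} gives the $^*$-algebra structure with unit $e\in X$, closedness together with the order unit gives $\|\cdot\|_{\sbt}$-completeness (\cref{closed uniform order unit}), and the scaling-factor-$1$ case (\cref{completely isometric}) makes $\Phi=\id_X\colon \Cstar_{\sbt}(X)\to B$ a completely isometric c.p.\ order zero map with c.p.\ inverse onto $X$.

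The forward implication, however, rests on a misreading of the paper's definitions and then opens a genuine gap. In this paper a \emph{complete order isomorphism} is a surjective complete order embedding, i.e.\ a c.p.\ map which is \emph{completely isometric} with c.p.\ inverse. Your worry that $\|\psi^{-1}\|$ might exceed $1$ (and the example $\psi=\tfrac12\,\id_{\C}$) therefore does not arise: $\tfrac12\,\id_{\C}$ is not a complete order isomorphism here, no rescaling is needed (an isometry is already contractive), and $\|\psi^{-1}\|=1$ automatically. Consequently \cref{invertible c.p. order zero and order scale}(ii) already shows that $e:=\psi(1_A)$ is an order unit with scaling factor $\|\psi^{-1}\|=1$, while the structure theorem (see \cref{prop52.5}) gives $e\in B_+^1\cap X\cap X'$ and $X^2=eX$, and $X=\psi(A)$ is closed because $\psi$ is isometric on the complete space $A$. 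That is the paper's entire forward argument. Your substitute construction, $e=\sup\{x\in X_{\mathrm{s.a.}}:\|x\|_B\le 1\}$ computed in $B^{**}$, is both unnecessary and incomplete: order-theoretic suprema of bounded sets of self-adjoint elements generally fail to exist as least upper bounds in a noncommutative $\Cstar$-algebra, you never establish that $d_0$ (equivalently $e$) lies in $A$ (you explicitly write ``granting $e\in X$''), and the $\mathrm{Ad}\,u$-invariance argument for centrality presupposes exactly the existence and uniqueness you have not shown. As written, the forward direction does not close; replacing it with the short argument above (using the paper's definition of complete order isomorphism) fixes it.
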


Since the construction of $\Cstar_{\sbt}(X)$ heavily utilizes the local order unit $e$, it is natural to ask how much it depends on the choice of $e$. Below we summarize some of our results in this regard (cf.\ \cref{mult is order unit}, \cref{order unit invariant}, and \cref{envelope}). 

\begin{proposition}
\label{proposition F}
Let $B$ be a $\Cstar$-algebra and $X\subset B$ a closed self-adjoint linear subspace. Suppose there exists a local order unit $e\in B_+^1\cap X'$ for $X$ so that $X^2\subset eX$. 
\begin{enumerate}[label=\textnormal{(\roman*)}]
\item If $e\in X$ and there exists $h\in B_+^1$ which satisfies $hx=ex$ for all $x\in X$, then $h\in X'$ is an        order unit  
 for $X$ with the same scaling factor as $e$, such that $X^2\subset hX$.  Moreover, the associative bilinear maps and norms induced by $e$ and $h$ on $X$ agree, and the induced $\Cstar$-algebras are equal.
\item If there exists another local order unit $h\in B_+^1\cap X\cap X'$ so that $X^2\subset hX$, then the induced $\Cstar$-algebras are unitally $^*$-isomorphic. 
\item If $e\in X$ and has scaling factor $1$, then the abstract operator system $(X,\{\M_r(X)\cap \M_r(B)_+\}_r,e)$ is completely order isomorphic to $\Cstar_{\sbt}(X)$, and so $\Cstar_{\sbt}(X)$ is canonically isomorphic to the enveloping $\Cstar$-algebra for this system. 
\end{enumerate}
\end{proposition}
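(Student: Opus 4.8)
The plan is to handle the three parts in turn, using throughout that $X$ is closed in $B$, so that $e$ is an        order unit and $X$ is complete for $\|\cdot\|_{\sbt}$ (cf.\ \cref{closed uniform order unit}); in particular $\Cstar_{\sbt}(X)=X$ as $^*$-vector spaces, and by the remark following \cref{theorem B} the matricial positive cones of $X$ inside $B$ and inside $\Cstar_{\sbt}(X)$ coincide. I also record once that, because $e$ is a local order unit, left multiplication by $e$ is injective on $X$ (the faithfulness used to define $\cdot_e$, the product $\sbt$ of \cref{theorem A}).

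For (i), I would first note that $h\in X'$: taking adjoints in $hx^{*}=ex^{*}$ and using $e\in X'$ gives $xh=xe=ex=hx$ for all $x\in X$. Since $hx=ex$ for every $x$, one has $hX=eX$, hence $X^2\subset eX=hX$. To see that $h$ is an        order unit, evaluate $hx=ex$ at $x=e\in X$ to get $he=e^2$, whence $e(h-e)=(h-e)e=0$; as $e$ and $h-e$ are commuting self-adjoint elements with $e(h-e)=0$ and $h=e+(h-e)\geq 0$, functional calculus in the commutative $\Cstar$-algebra they generate forces $h-e\geq 0$. Thus $h\geq e$, and $R\|x\|h-x=R\|x\|(h-e)+(R\|x\|e-x)\geq 0$ shows $h$ is an        order unit with scaling factor at most that of $e$, so \cref{theorem A} applies to $h$. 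The two products agree: for $x,y\in X$ we have $xy=h(x\cdot_h y)=e(x\cdot_h y)$ (as $x\cdot_h y\in X$) and $xy=e(x\cdot_e y)$, so injectivity of left multiplication by $e$ yields $x\cdot_h y=x\cdot_e y$; the two $\Cstar$-norms then coincide by uniqueness of the $\Cstar$-norm, and the induced $\Cstar$-algebras are equal. Finally, equality of the scaling factors follows from \cref{theorem B}, since both equal $\|\Phi^{-1}\|$ for the one map $\Phi=\id_X$ on this common $\Cstar$-algebra.

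For (ii), write $A_e$ and $A_h$ for the $\Cstar$-algebras induced by $e$ and $h$, with units $e$ and $h$; both have underlying $^*$-vector space $X$. The element $h$ lies in $X\cap X'$, so it is central in $A_e$ (again by injectivity of left multiplication by $e$), and it is positive in $A_e$ by the cone agreement above. The crucial observation --- and the main obstacle to make precise --- is that, because $h$ is a local order unit for $X$ and the cones agree, $h$ dominates a positive multiple of the unit $e$ of $A_e$, so $h$ is a positive \emph{invertible} central element of the unital $\Cstar$-algebra $A_e$. Writing $h^{-1}$ for its inverse in $A_e$, the identity $h(x\cdot_h y)=xy=e(x\cdot_e y)$ together with faithfulness of multiplication by $e$ gives $h\cdot_e(x\cdot_h y)=x\cdot_e y$, that is, $x\cdot_h y=h^{-1}\cdot_e(x\cdot_e y)$: the product $\cdot_h$ is the isotope of $\cdot_e$ by the central invertible $h$. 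I would then check directly that $\alpha(x)\coloneqq h\cdot_e x$ is a unital $^*$-isomorphism $A_e\to A_h$ --- bijective since $h$ is invertible, multiplicative and $^*$-preserving by centrality and self-adjointness of $h$, and $\alpha(e)=h$ --- isometry being automatic for a $^*$-isomorphism of $\Cstar$-algebras. Hence the induced $\Cstar$-algebras are unitally $^*$-isomorphic.

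For (iii), with scaling factor $1$, \cref{theorem B} says $\Phi=\id_X\colon \Cstar_{\sbt}(X)\to B$ is a complete isometry, and by the cone agreement it is a unital complete order embedding with image $X$. Thus $\id_X$ is a unital complete order isomorphism from $\Cstar_{\sbt}(X)$ onto $(X,\{\M_r(X)\cap \M_r(B)_+\}_r,e)$; transporting the Choi--Effros axioms across this isomorphism shows the latter is an abstract operator system with Archimedean matrix        order unit $e$, completely order isomorphic to $\Cstar_{\sbt}(X)$. It then remains to recall that the $\Cstar$-envelope of an operator system which is unitally completely order isomorphic to a unital $\Cstar$-algebra $C$ is $C$ itself (the Shilov boundary ideal being trivial); applied to $C=\Cstar_{\sbt}(X)$ this identifies the enveloping $\Cstar$-algebra of the system canonically with $\Cstar_{\sbt}(X)$. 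The only genuinely technical point in the whole argument is the invertibility of $h$ in $A_e$ in part (ii); the remainder is bookkeeping on top of \cref{theorem A} and \cref{theorem B}.
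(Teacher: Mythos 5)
Your proof is correct and follows essentially the same route as the paper's own arguments (\cref{mult is order unit}, \cref{order unit invariant}, and \cref{envelope}): nondegeneracy of $e$ to match the two products, invertibility of the central positive element $h$ in $\Cstar_{\sbt}(X)$ via the agreement of positive cones to produce the unital $^*$-isomorphism, and the complete isometry of $\Phi$ at scaling factor $1$ to identify the enveloping $\Cstar$-algebra. The only deviations are cosmetic shortcuts enabled by the extra hypotheses here ($e\in X$ and $X$ closed) --- e.g.\ deducing $h\geq e$ from $he=e^2$, and appealing to uniqueness of the complete $\Cstar$-norm rather than to the identity $h_j(e)x=h_j(h)x$ --- so no substantive gap remains.
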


In several of the above results, the local order unit $e$ for $X$ need not actually lie in $X$. In this situation, one can replace $X$ with $\Span\{X,e\}$ and obtain a unitization of $\Cstar_{\sbt}(X)$ and of the map $\Phi$ from \cref{theorem B}.

\begin{theorem}\label{theorem C}
    Let $B$ be a $\Cstar$-algebra and $X\subset B$ a self-adjoint linear subspace. Suppose there exists an $e\in B$ as in \emph{\cref{theorem A}}, and assume $X$ is complete with respect to the norm $\|\cdot\|_{\sbt}$. Then we have the following:

    \begin{enumerate}[label=\textnormal{(\roman*)}]
\item  The associative bilinear map from \emph{\cref{theorem A}} extends to an associative bilinear map on $Z\coloneqq \Span\{X,e\}$ which also satisfies \eqref{mult id''}, and the norm $\|\cdot\|_{\sbt}$ from \emph{\cref{theorem A}} extends to a $\Cstar$-norm on $Z$. 
    
\item The inclusion $X\subset Z$ induces an inclusion of $\Cstar_{\sbt}(X)$ in $\Cstar_{\sbt}(Z)$ as a two-sided closed ideal. 
When $e\in X$ or $\Cstar_{\sbt}(X)$ is essential in $\Cstar_{\sbt}(Z)$, we may identify the smallest unitization $\Cstar_{\sbt}(X)^\sim$ with $\Cstar_{\sbt}(Z)$; otherwise $\Cstar_{\sbt}(X)$ is already unital, and $\Cstar_{\sbt}(Z)$ is a forced unitization, $\Cstar_{\sbt}(X)^\dagger\cong \Cstar_{\sbt}(Z)$.   
\item The map $\Phi$ from \emph{\cref{theorem B}} extends to a c.p.c.\ order zero map $\Phi^\dagger\coloneqq\id_Z\colon \Cstar_{\sbt}(Z)\longrightarrow B$. In case we identify $\Cstar_{\sbt}(X)^\sim$ with $\Cstar_{\sbt}(Z)$ as above, $\Phi^{\dagger}$ is bounded below  
(resp. a complete order isomorphism) if and only if $\Phi$ is.
\end{enumerate}
\end{theorem}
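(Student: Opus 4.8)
The plan is to dispense immediately with the case $e\in X$, where $Z=X$ and every assertion is vacuous, and to work throughout with $e\notin X$, so that $Z=X\oplus\C e$ as vector spaces. For (i) I would first adjoin $e$ as a unit at the purely algebraic level: for $z_i=x_i+\lambda_i e\in Z$ set
\[
z_1\sbt z_2\coloneqq (x_1\sbt x_2)+\lambda_2 x_1+\lambda_1 x_2+\lambda_1\lambda_2\,e,
\]
extend the involution by $(x+\lambda e)^*=x^*+\bar\lambda e$, and observe that $e$ is then the unit for $\sbt$. Associativity and the $^*$-algebra axioms reduce to those of $(X,\sbt)$ furnished by \cref{theorem A}. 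The identity \eqref{mult id''} on $Z$ is a one-line check: expanding $z_1z_2$ in $B$, commuting $e$ past each $x_i$ via $e\in X'$ and invoking the instance of \eqref{mult id''} already available on $X$, yields $z_1z_2=e(z_1\sbt z_2)$. Since $X$ is complete, $(X,\sbt,\|\cdot\|_{\sbt})$ is a $\Cstar$-algebra, and $(Z,\sbt)$ is its algebraic unitization; hence it carries a unique $\Cstar$-norm extending $\|\cdot\|_{\sbt}$, which I take as the asserted extension.

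For (ii), the computation $x\sbt(y+\lambda e)=x\sbt y+\lambda x\in X$ shows $X$ is a two-sided $\sbt$-ideal of $Z$, so its closure $\Cstar_{\sbt}(X)$ is a closed two-sided ideal of $\Cstar_{\sbt}(Z)$. The remaining statements are the standard unitization dichotomy: if $\Cstar_{\sbt}(X)$ is non-unital it is essential in its minimal unitization and $\Cstar_{\sbt}(Z)\cong\Cstar_{\sbt}(X)^\sim$; if $\Cstar_{\sbt}(X)$ is unital with internal unit $1\neq e$, then $p\coloneqq e-1$ is a nonzero central projection satisfying $p\sbt a=0$ for all $a\in X$, whence $Z=X\oplus\C p$ splits as $\Cstar_{\sbt}(Z)\cong\Cstar_{\sbt}(X)\oplus\C p=\Cstar_{\sbt}(X)^\dagger$ and $\Cstar_{\sbt}(X)$ fails to be essential.

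For (iii), I would propagate the structure theorem. By \cref{theorem B}, $\Phi=\id_X$ is c.p.c.\ order zero, so by \cite[Theorem 3.3]{WZ09} it decomposes as $\Phi(\cdot)=e\,\pi(\cdot)$ for a $^*$-homomorphism $\pi\colon\Cstar_{\sbt}(X)\to\mathcal{M}(\Cstar(X))$ with $e\in\mathcal{M}(\Cstar(X))_+^1\cap\pi(\Cstar_{\sbt}(X))'$; that the support is exactly $e$ follows by comparing $\Phi(a)\Phi(b)=e\,\Phi(a\sbt b)$ with the structure relation, or by identifying $e$ with the strict limit of $\Phi$ on an approximate unit of $\Cstar_{\sbt}(X)$. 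As $\Cstar_{\sbt}(Z)$ is a unitization of $\Cstar_{\sbt}(X)$, I extend $\pi$ to a unital $^*$-homomorphism $\pi^\dagger$ with $\pi^\dagger(e)=1$, which still commutes with $e$; then $e\,\pi^\dagger$ is c.p.c.\ order zero (a positive contraction times a commuting $^*$-homomorphism always is), and $e\,\pi^\dagger(x+\lambda e)=e\pi(x)+\lambda e=x+\lambda e$ identifies it with $\Phi^\dagger=\id_Z$. The restriction $\Phi^\dagger|_X=\Phi$ gives the forward implications of the equivalence; for the converse I would note that $\pi^\dagger$ is injective precisely when $\pi$ is, and that the direction $\C e$ contributes the fixed norm $\|e\|$, so bounded-belowness—and likewise agreement of all matrix cones, i.e.\ complete order isomorphism—transfers from $\Phi$ to $\Phi^\dagger$.

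The main obstacle is this last equivalence, and within it the unital case of $\Cstar_{\sbt}(X)$: there the extra summand $\C p$ is orthogonal to $X$ rather than detected by it, so one must verify separately that the injectivity and fullness data governing bounded-belowness of $e\,\pi$ survive the passage to $e\,\pi^\dagger$, and that the forced-unitization identification of (ii) is compatible with the decomposition. The secondary technical point requiring care is confirming that $e$ genuinely multiplies $\Cstar(X)$ and commutes with $\pi^\dagger(Z)$, so that the order zero criterion applies on all of $\Cstar_{\sbt}(Z)$; this I would settle through the approximate-unit identification of $e$ indicated above.
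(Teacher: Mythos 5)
Parts (i) and (ii) of your proposal are essentially correct, though you reach them by a different route than the paper: Proposition \ref{theorem57 span} simply checks that the pair $(Z,e)$ itself satisfies the hypotheses of \cref{theorem57} (i.e.\ $e\in Z'\cap B_+^1$ is a local order unit for $Z$ with $Z^2=eZ$) and lets the general machinery produce the product and the norm $\lim_j\|h_j(e)z\|$ on $Z$, whereas you build the algebraic unitization by hand and import its abstract $\Cstar$-norm. Your direct verification of \eqref{mult id''} on $Z$ is correct, uniqueness of $\Cstar$-norms on a $^*$-algebra admitting a complete one reconciles the two norms, and your dichotomy in (ii) matches Remark \ref{rem: unitization}(i).

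The genuine gaps are in (iii). First, your construction $\Phi^\dagger=e\,\pi^\dagger$ does not yield $\id_Z$. The structure theorem produces $\pi\colon\Cstar_{\sbt}(X)\to\mathcal{M}(C)$ with $C=\Cstar(X)$, and the unit of $\mathcal{M}(C)$ is the support projection $1_{C^{**}}$ of $C$ in $B^{**}$; hence $e\,\pi^\dagger(e)=e\cdot 1_{C^{**}}$, the compression of $e$ to the support of $C$, which in general is neither equal to $e$ nor even an element of $B$. Concretely, take $B=C([0,2])$, $e(t)=t/2$, and $X=eC_0((0,1))$: all hypotheses of the theorem hold, $\Cstar(X)=C_0((0,1))$, and $e\cdot 1_{C^{**}}=e\chi_{(0,1)}\neq e$. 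Your approximate-unit argument only identifies $h$ with $e$ \emph{as multipliers of $C$}, which is exactly where this discrepancy is invisible; so the step ``$e\pi^\dagger(x+\lambda e)=e\pi(x)+\lambda e$'' fails. The paper sidesteps all of this by applying \cref{prop 61} directly to $(Z,e)$: complete positivity of $\id_Z$ comes from \cref{prop52'}(iii) for $Z$ and its amplifications, and order-zero-ness falls out of $wz=e(w\sbt z)$ with no recourse to the structure theorem.

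Second, the converse direction ``$\Phi$ a complete order isomorphism $\Rightarrow$ $\Phi^\dagger$ is'' is asserted rather than proved. This is precisely the ``Moreover'' clause of \cref{a: isometric unitization}, whose proof requires the $\varepsilon$-cutdown approximation argument (equivalently, the scaling-factor computation in \cref{invertible c.p. order zero and order scale}(ii) showing the scaling factor of $e$ for $\theta^\sim(A^\sim)$ is still $1$); ``the direction $\C e$ contributes the fixed norm $\|e\|$'' does not address isometry of $\Phi^\dagger$ on elements $x+\lambda e$ with both components nonzero, which is the whole content. By contrast, the part you single out as the main obstacle is not one: the equivalence is only claimed when $\Cstar_{\sbt}(Z)$ is identified with $\Cstar_{\sbt}(X)^\sim$, so the forced-unitization case is excluded, and bounded-belowness transfers immediately because $\Phi^\dagger=\id_Z$ is automatically injective and $Z$ is closed in $B$ if and only if $X$ is (cf.\ Remark \ref{rem: unitization}(ii)).
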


\medskip

The article is organized as follows. In \cref{sect: preliminaries}, we recall relevant definitions and results regarding completely positive maps, order zero maps, and complete order embeddings. We also give our characterization of order zero complete order embeddings and show by examples that it fails without the order zero hypothesis. In \cref{sec: order unit}, we investigate properties of (commuting) order units which will feature heavily throughout later sections. \cref{Section:Def bullet} is dedicated to establishing \cref{theorem A}, and \cref{sect: a c.p.c. order zero map} to the c.p.c.\ order zero map $\Phi$ from \cref{theorem B}. In \cref{Z}, we consider unitizations of our construction, establishing most of \cref{theorem C}. We also address here the dependence of our constructions on the choice of order unit. Finally, in \cref{closure} we incorporate the further assumption that our subspace is closed and strengthen results from the previous sections accordingly, completing our proof of \cref{theorem B}.

\renewcommand*{\thetheorem}{\roman{theorem}}
\numberwithin{theorem}{section}
\renewcommand*{\thedefinition}{\roman{definition}}
\numberwithin{definition}{section}
\renewcommand*{\theproposition}{\roman{theorem}}
\numberwithin{proposition}{section}
\renewcommand*{\thecorollary}{\roman{corollary}}
\numberwithin{corollary}{section}
\section{Order zero order embeddings}\label{sect: preliminaries}
\noindent For a $\Cstar$-algebra $A$, we 
write $A_+$ for the cone of positive elements, $A_{\mathrm{s.a.}}$ for its set of self-adjoint elements, and $A^1$, $A^1_+$, and $A^1_{\mathrm{s.a.}}$ for the respective closed unit balls. 
We denote the minimal unitization of $A$ by $A^{\sim}$ with the convention that $A=A^{\sim}$ when $A$ is unital, and we write $A^\dagger$ for the smallest forced unitization of $A$, which agrees with $A^{\sim}$ when $A$ is non-unital and contains $1_A$ as a central co-rank 1 projection when $A$ is unital. The multiplier algebra of $A$ (regarded as a subalgebra of the double dual $A^{**}$) is denoted by $\mathcal{M}(A)$. 
 For a self-adjoint linear subspace $X$ of a larger $\Cstar$-algebra $M$ containing $A$ and an element $e\in A$, we write $X' \cap A\subset M$ for the relative commutant of $X$ in $M$, and we define
 \begin{align*}
   X^2&\coloneqq\{xy\mid x,y\in X\}\ \text{ and}\\
   eX&\coloneqq\{ex\mid x\in X\}.
 \end{align*}
In the literature $X^2$ sometimes denotes $\Span\{xy\mid x,y\in X\}$; throughout this article, the two will coincide because of assumptions placed on $X^2$. For $r\geq 1$, we write $\M_r(X)$ for $r \times r$ matrices over $X$. 
We will fix throughout an approximate inverse $\{h_j\}_j\in C_0((0,1])_+\subset C([0,1])_+$ for $\id_{[0,1]}$ by defining for each $j\geq 1$
 \begin{align}\label{hk}
   h_j(t) = \left\{
     \begin{array}{lr}
       j^2t & : t \in [0,\frac{1}{j}]\\
       \frac{1}{t} & : t \in [\frac{1}{j},1]
     \end{array}
   \right..
\end{align}


Next we highlight a few relevant properties and observations for completely positive maps and completely positive order zero maps.  For a full introduction to completely positive and completely bounded maps and see \cite{Pau02}.
\begin{definition}
Let $A$ and $B$ be $\Cstar$-algebras with self-adjoint linear subspaces $X\subset A$, $Y\subset B$, and $\theta\colon X\longrightarrow Y$ a linear map. We say $\theta$ is \emph{positive} if $\theta(x)\in Y\cap B_+$ for all $x\in X\cap A_+$. We say it is \emph{completely positive} (c.p.) if this holds for all matrix amplifications $\theta^{(r)}\colon \M_r(X)\longrightarrow \M_r(Y)$.  
We say $\theta$ is \emph{completely contractive} if $\sup_{r\geq 1} \|\theta^{(r)}\|\leq 1$. We say a map is c.p.c.\ when it is completely positive and (completely) contractive. 

We say $\theta$ is a \emph{complete order embedding} if $\theta$ is c.p.\ and completely isometric  
with c.p.\ inverse $\theta^{-1}\colon \theta(A)\longrightarrow A$. A surjective complete order embedding is a \emph{complete order isomorphism}.  
We say $\theta$ is \emph{bounded below} if there exists $R>0$ so that $\|\theta(x)\|\geq R\|x\|$ for all $x\in X$. 
\end{definition}

\begin{remarks}\label{poster}

(i) Under certain assumptions, some of the properties of a complete order embedding are implied by the others, which is why this term tends to have varying definitions in the literature. Many of these require that the map is unital, but even in the nonunital setting, 
if the domain of an isometric c.p.\ map is a (not-necessarily unital) $\Cstar$-algebra, then the map is automatically a complete order embedding if it is completely isometric or order zero 
(see \cite[Remark 1.7(ii)]{CW1} and
\cite[Proposition 1.8]{CW1}, 
respectively).

(ii) Complete order isomorphisms between $\Cstar$-algebras are automatically $^*$-homomorphisms (see e.g., \cite[Theorem II.6.9.17]{Bla06}). It follows that a self-adjoint linear subspace $X$ of a $\Cstar$-algebra $B$ which is completely order isomorphic to a $\Cstar$-algebra $A$ captures the isomorphism class of $A$ in that another $\Cstar$-algebra $C$ is $^*$-isomorphic to $A$ if and only if it is completely order isomorphic to $X$. 

(iii) 
Bounded below maps are exactly the invertible bounded linear maps with bounded linear inverses (defined on the image). 
A linear map between Banach spaces is bounded below if and only if it is injective with closed range.
It follows that all matrix amplifications of a bounded below map $\theta\colon X\longrightarrow Y$ between closed self-adjoint linear subspaces of $\Cstar$-algebras are bounded below.
However, the lower bound need not be uniform, i.e., $\theta^{(r)}$ has a bounded linear inverse for each $r\geq 1$, but we are not guaranteed that $\theta^{-1}$ is completely bounded. Even 
an isometric c.p.\ map between closed self-adjoint linear subspaces of $\Cstar$-algebras with c.p.\ inverse 
can fail to be completely isometric as the following example shows. 
\end{remarks}

\begin{example}\label{Bestellung}
We construct a map $\varphi\colon \M_3\longrightarrow A$, where $A$ is a homogeneous $\Cstar$-algebra, such that $\varphi$ is c.p.\ and isometric with c.p.\ inverse (defined on its image) but is \emph{not} completely isometric. 

We denote by $\mathrm{Gr}(2,\C^3)$ the Grassmannian manifold of all $2$- dimensional subspaces of $\C^3$, which is a compact Hausdorff space. 
Set \[B\coloneqq C(\mathrm{Gr}(2,\C^3),\M_3),\] and define $\theta\colon \M_3\longrightarrow B$ by $\theta(a)(V)\coloneqq p_Vap_V$ where $p_V\in \M_3$ is the projection onto $V$. 
Define 
\begin{align*}
    \varphi\coloneqq\theta\oplus \textstyle{\frac{1}{2}}\id_{\M_3}\colon \M_3 &\longrightarrow B\oplus \M_3=:A.
\end{align*} 

First, we claim that $\varphi$ is c.p.\ and isometric, which comes down to checking that $\theta$ is c.p.\ and isometric. Since \[\theta^{(r)}\colon \M_r(\M_3)\longrightarrow \M_r\big(C(\mathrm{Gr}(2,\C^3)),\M_3\big)\cong C\big(\mathrm{Gr}(2,\C^3),\M_r(\M_3)\big),\] to show that $\theta$ is c.p.,  it suffices to show that $\text{ev}_V\circ \theta$ is c.p.\ for every $V\in \mathrm{Gr}(2,\C^3)$. But $\text{ev}_V\circ \theta$ is simply the compression by a projection, which gives a c.p.\ map. 
To see that 
$\theta$ is isometric, set $a\in \M_3\backslash\{0\}$ and $0\neq \xi\in \C^3$ such that $\|a\xi\|=\|a\|\|\xi\|$, and choose $V\in \mathrm{Gr}(2,\C^3)$ such that $\xi$ and $a\xi\in V$. Then $\|\theta(a)(V)\|=\|p_Vap_V\|=\|a\|$, and hence $\|\theta(a)\|\geq \|a\|\geq \|\theta(a)\|$. 

We claim that $\varphi$ is not 2-isometric. Since the second summand, $\frac{1}{2}\id_{\M_3}$ is strictly contractive, we only need to demonstrate that $\theta^{(2)}$ is not isometric. Consider 
\begin{align*}
    s =
    \textstyle{\frac{1}{\sqrt{2}}} (e_{12}\otimes e_{13} + e_{22}\otimes e_{23})\ 
    \in \M_2\otimes \M_3.
\end{align*}
Then $\|s\|=1$, $s$ is rank 1, and 
\begin{align}
    s^*s&=e_{22}\otimes e_{33}\ \text{ and } \label{Bes1}\\
    ss^*&=\textstyle{\frac{1}{2}} (e_{11}\otimes e_{11} + e_{12} \otimes e_{12} + e_{21}\otimes e_{21} + e_{22}\otimes e_{22}).\label{Bes2}
\end{align}
Suppose there exists a $V\in \mathrm{Gr}(2,\C^3)$ so that
\[\|\theta^{(2)}(s)(V)\|= 
\|(1_{\M_2}\otimes p_V) s (1_{\M_2}\otimes p_V)\|=\|s\|=1.\]
Then there exists $\xi\in V\oplus V$ with $\|\xi\|=1$ such that $\|(1_{\M_2}\otimes p_V) s\xi\|=\|s\xi\|=\|s\|\|\xi\|=1$, and hence $s\xi\in V\oplus V$. Since $s$ is rank 1, that means $V\oplus V$ is an invariant subspace for $s$, and hence $(1_{\M_2}\otimes p_V) s =s (1_{\M_2}\otimes p_V)$. It follows that $ss^*\geq (1_{\M_2}\otimes p_V)ss^*=ss^*(1_{\M_2}\otimes p_V)\neq 0$ and likewise with $s^*s$. 
Moreover, from $[ss^s,1_{\M_2}\otimes p_V]=[s^*s,1_{\M_2,p_V}]=0$, \eqref{Bes1}, and \eqref{Bes2} it follows that $[p_V,e_{11}]=[p_V,e_{22}]=[p_V,e_{33}]=0$. Since $(1_{\M_2}\otimes p_V)s^*s=e_{22}\otimes p_Ve_{33}$ is positive and nonzero, we see immediately that $p_Ve_{33}\neq 0$. Combined with the fact that $[p_V,e_{33}]=0$, we have $p_V\geq e_{33}$. Since $(1_{\M_2}\otimes p_V)ss^*=\textstyle{\frac{1}{2}} (e_{11}\otimes p_Ve_{11} + e_{12} \otimes p_Ve_{12} + e_{21}\otimes p_Ve_{21} + e_{22}\otimes p_Ve_{22})$ is positive and nonzero, we have  
$p_Ve_{11}\neq 0$ or $p_Ve_{22}\neq 0$. If $p_Ve_{22}=0$, then $(1_{M_2}\otimes p_V)ss^*=\frac{1}{2}(e_{11}\otimes e_{11})$, contradicting $ss^*\geq (1_{M_2}\otimes p_V)ss^*$. 
Likewise we must also have $p_Ve_{11}\neq 0$, and so $p_V\geq e_{11},e_{22}$ as well. 
But this means $\dim(V)\geq 3$, a contradiction. 

Hence we must have $\|\theta^{(2)}(s)(V)\|<1$ for all $V\in \mathrm{Gr}(2,\C^3)$. Since $\mathrm{Gr}(2,\C^3)$ is compact, that means $\|\theta^{(2)}(s)\|<1=\|s\|$, and therefore $\theta$ is not 2-isometric, let alone completely isometric.
\end{example}

This demonstrates that, in the non-unital setting, even if the domain is a $\Cstar$-algebra, an isometric and c.p.\ map with c.p.\ inverse may fail to be a complete order embedding, whence assumptions are necessary.

\begin{definition}
A c.p.\ map $\theta\colon A\longrightarrow B$ between $\Cstar$-algebras is \emph{order zero} 
if for any $a,b\in A_+$, 
\[ab=0\ \Longrightarrow\ \theta(a)\theta(b)=0.\]
\end{definition}
While by Stinespring's Theorem, every c.p.\ map can be written as a compression of a $^*$-homomorphism, order zero maps are characterized in \cite[Theorem 3.3]{WZ09} as compressions of $^*$-homomorphisms by commuting positive elements. 
\begin{theorem}\label{thm: structure thm} \emph{(Structure theorem for order zero maps)}
Let $A$ and $B$ be $\Cstar$-algebras, $\theta\colon A\longrightarrow B$ a c.p.\ order zero map, $C=\Cstar(\theta(A))$, $\{u_\lambda\}_\lambda$ an increasing approximate identity for $A$, and $h=\emph{s.o.-}\lim_\lambda \theta(u_\lambda)\in C^{**}$. Then  $0\leq h\in \mathcal{M}(C)\cap C'\subset C^{**}$ 
with $\|h\|=\|\theta\|$, and there exists a $^*$-homomorphism $\pi_\theta\colon A\longrightarrow \mathcal{M}(C)$ so that for all $a\in A$, 
\[\theta(a)=h\pi_\theta(a)=h^{1/2}\pi_\theta(a)h^{1/2}.\]
\end{theorem}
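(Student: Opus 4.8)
The plan is to exhibit $\theta$ as a homomorphism weighted by a central positive element, via a Stinespring dilation in which the order zero condition is used to force a commutation relation. After rescaling so that $\|\theta\|\le 1$, I would first dispense with the elementary properties of $h$: since $\{u_\lambda\}_\lambda$ is increasing and $\theta$ is positive, the net $\{\theta(u_\lambda)\}_\lambda$ is increasing and norm-bounded, hence converges strongly in $C^{**}$ to a positive element $h$ with $\|h\|=\sup_\lambda\|\theta(u_\lambda)\|=\|\theta\|$, the last equality being the standard fact that the norm of a c.p.\ map is attained along an approximate unit. It then remains to produce the homomorphism $\pi_\theta$, the factorization, and the assertions $h\in\mathcal{M}(C)\cap C'$.

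The heart of the matter, and the step where order zero is indispensable, is the identity
\[
\theta(a)\theta(b)=h\,\theta(ab)\qquad(a,b\in A),
\]
together with $h\in C'$. I would pass to a faithful nondegenerate representation $C\subseteq \mathcal{B}(H)$ and take a minimal Stinespring dilation $\theta(a)=V^{*}\pi(a)V$ with $\pi\colon A\to\mathcal{B}(K)$ and $\overline{\pi(A)VH}=K$; then $h=V^{*}V$ and the displayed identity is equivalent to the commutation $[\,VV^{*},\pi(a)\,]=0$ for all $a$, i.e.\ to the Stinespring weight being central for the dilating representation. This commutation is the main obstacle: the product $ab$ is \emph{not} assembled from disjointly supported pieces, so the order zero hypothesis cannot be applied to $a,b$ directly. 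The route I would take is to feed disjointly supported continuous functional calculus into order zero—for $a\in A_+$ and $f,g\in C_0(\sigma(a)\setminus\{0\})$ with $fg=0$ one gets $\theta(f(a))\theta(g(a))=0$—and then combine these orthogonality relations with minimality of the dilation and a positivity/limiting argument (running a partition of unity on $\sigma(a)$) to force $VV^{*}$ into $\pi(A)'$. As a by-product this already yields $h\in C'$ (take $f=\iota$ to see that $h$ commutes with each $\theta(a)$, hence with the generating set $\theta(A)$, hence with $C$), while the genuinely noncommutative content—the commutator part of the identity, which the purely commutative reduction does not see—is exactly what the centrality of $VV^{*}$ supplies.

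Granting the identity and $h\in C'$, the remaining construction is formal. Since $a\le\|a\|u_\lambda$ eventually gives $\theta(a)\le\|a\|h$, the support projection $p$ of $h$ dominates that of each $\theta(a)$, so $p\theta(a)=\theta(a)$. I would invert $h$ on its support using the approximate inverses $h_j$ from \eqref{hk}, defining $\pi_\theta(a):=\text{s.o.-}\lim_j h_j(h)\theta(a)\in C^{**}$; this limit exists because $h$ commutes with $\theta(a)$ and $h_j(h)h\to p$ strongly. One checks $h\,\pi_\theta(a)=\theta(a)$ directly, while $\theta(a)\theta(b)=h\theta(ab)$ forces $\pi_\theta(a)\pi_\theta(b)=\pi_\theta(ab)$ and $\pi_\theta(a)^{*}=\pi_\theta(a^{*})$, so $\pi_\theta\colon A\to\mathcal{M}(C)$ is a $^*$-homomorphism; centrality of $h$ rewrites the factorization symmetrically as $\theta(a)=h^{1/2}\pi_\theta(a)h^{1/2}$. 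The identity also gives $hC\subseteq C\supseteq Ch$, so $h\in\mathcal{M}(C)$. Finally, for nonunital $A$ one verifies that all of the strong limits above are independent of the choice of approximate unit, so the construction is well defined exactly as stated.
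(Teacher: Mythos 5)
The paper does not prove this statement: it is quoted verbatim from \cite[Theorem 3.3]{WZ09}, so there is no internal proof to compare against. Judged on its own terms, your proposal has the right architecture (define $h$ as the strong limit of $\theta(u_\lambda)$, reduce everything to the identity $\theta(a)\theta(b)=h\,\theta(ab)$ together with $h\in C'$, then build $\pi_\theta$ by inverting $h$ on its support), and the formal last third is fine. But the step you yourself call the heart of the matter is not actually proved, and it is the entire content of the theorem. Showing $VV^*\in\pi(A)'$ for the minimal Stinespring dilation is, by minimality, equivalent to the trilinear identity $\theta(c)\theta(ab)=\theta(ca)\theta(b)$ for all $a,b,c$, with $a$ and $b$ in general position. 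The only tool you put on the table is orthogonality of $\theta(f(a))$ and $\theta(g(a))$ for disjointly supported $f,g$ and a single positive $a$; running partitions of unity on $\sigma(a)$ only controls $\theta$ restricted to the singly generated commutative subalgebra $\Cstar(a)$, yielding a local factorization $\theta|_{\Cstar(a)}=h_a\pi_a(\cdot)$ with some weight $h_a$ and homomorphism $\pi_a$ depending on $a$. Nothing in the sketch explains why the weights $h_a$ coincide with $h$ for every $a$, nor why the local homomorphisms cohere across non-commuting elements --- and the closing sentence (``the centrality of $VV^*$ supplies the noncommutative content'') is circular, since that centrality is precisely what must be established. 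The proof in \cite{WZ09} supplies the missing mechanism by passing to the normal extension $\theta^{**}$ on the bidual, where order zero applies to spectral projections: for a projection $p\in A^{**}$ one has the orthogonal decomposition $h=\theta^{**}(p)+\theta^{**}(1-p)$, which identifies $\theta^{**}(p)$ as $h$ cut down by a support projection, and $\pi_\theta$ is assembled from these support projections. Some such genuinely noncommutative input is unavoidable.

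Two smaller points. First, ``$a\le\|a\|u_\lambda$ eventually'' is false for a general increasing approximate unit (take $A=\mathcal{K}(H)$, $u_n$ finite-rank projections, $a$ a rank-one projection whose range lies in none of them); the correct route to $\theta(a)\le\|a\|h$ is again through $a\le\|a\|1_{A^{**}}$ and the normal extension. Second, your equivalence between the bilinear identity plus $h\in C'$ and $[VV^*,\pi(a)]=0$ is fine once one notes that the bilinear identity for all pairs implies the trilinear one, but minimality of the dilation is needed for the direction you actually use, so it should be invoked explicitly there rather than only in the vague ``combine with minimality'' step.
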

\begin{remarks}\label{cor: order zero for unital}
(i) (\cite[Proposition 3.2]{WZ09}) When $A$ is unital, we have $h=\theta(1_A)$, and in general the map $\theta^{\sim}\colon A^{\sim}\longrightarrow \mathcal{M}(C)$ given by $\theta^{\sim}(a+\lambda 1_{A^\sim})=\theta(a)+\lambda h$ is the unique c.p.\ order zero extension of $\theta$. 
In any case, $h\geq \theta(a)$ for all $a\in A_+^1$. Note that for $\theta^{\sim}$ to be order zero, one can replace $h$ with $1_{\mathcal{M}(C)}$ only if $\theta$ is in fact multiplicative, since u.c.p.\ order zero maps are automatically $^*$-homomorphisms. 

(ii) As pointed out in the introduction, the structure theorem gives a useful characterization of when a c.p.\ map with unital domain is order zero: 
    Assume $A$ and $B$ are $\Cstar$-algebras with $A$ unital. Then a c.p.\ map $\theta\colon A\longrightarrow B$ is order zero if and only if \eqref{dager} holds, i.e., $\theta(a)\theta(b)=\theta(1_A)\theta(ab)$, for all $a,b\in A$. 

(iii) Using the orthogonal decomposition of self-adjoint elements, one can show that for $\theta \colon A\longrightarrow B$ c.p.\ order zero, for any $a\in A$ with $\theta(a)\geq 0$ there exists some $b\in A_+$ with $\theta(a)=\theta(b)$. It follows that for all $r\geq 1$
 \[\theta^{(r)}(\M_r(A)_+)=\M_r(\theta(A))\cap \M_r(B)_+.\]
When $\theta$ is moreover assumed to be injective, for any $r\geq 1$ and $a\in M_r(A)$ we have $\theta^{(r)}(a)\geq 0$ if and only if $a\geq 0$. 
\end{remarks}

\begin{proposition}\label{prop: isom perp is coe}\label{a: isometric unitization}
Suppose $\theta\colon A\longrightarrow B$ is a c.p.\ order zero map. Then the following are equivalent. 

\begin{enumerate}[label=\textnormal{(\roman*)}]
    \item $\theta$ is isometric on positive elements of $A$.

\item $\theta$ is isometric.

\item $\theta$ is a complete order embedding.

\end{enumerate}
Moreover, if $A$ is nonzero and if $\theta$ is a complete order embedding which extends to a c.p.c.\ order zero map $\theta^\dagger\colon A^\dagger\longrightarrow B$ on the (forced) unitization $A^\dagger$ of $A$, 
then $\theta^\dagger$ is also a complete order embedding. 
\end{proposition}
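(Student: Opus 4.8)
The plan is to prove the three conditions equivalent by observing that (iii)$\Rightarrow$(ii)$\Rightarrow$(i) are immediate from the definitions, so that everything rests on (i)$\Rightarrow$(iii), i.e.\ on upgrading ``isometric on positive elements'' to ``complete order embedding''. I would run the whole argument through the structure theorem (\cref{thm: structure thm}), writing $\theta(a)=h^{1/2}\pi_\theta(a)h^{1/2}=h\pi_\theta(a)$ with $h\in\mathcal M(C)\cap C'$ commuting with $\pi_\theta(A)$ and $\|h\|=\|\theta\|$, and representing $C$ (hence $\mathcal M(C)$, $N:=\pi_\theta(A)$, and $h$) faithfully and nondegenerately on a Hilbert space $\mathcal H$. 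First I would extract the structural consequences of (i): since the norm of a c.p.\ map is attained on positive elements, (i) forces $\|h\|=\|\theta\|=1$, so $\theta$ is c.p.c.; and since $\theta(a)=0$ with $a\ge 0$ would give $\|a\|=\|\theta(a)\|=0$, the homomorphism $\pi_\theta$ is faithful and, as $h$ commutes with $\pi_\theta(A)$, $\theta$ itself is injective. Injectivity together with \cref{cor: order zero for unital}(iii) then yields $\theta^{(r)}(a)\ge 0\iff a\ge 0$ for every $r$, i.e.\ $\theta^{-1}$ is c.p.; this disposes of the c.p.-inverse requirement and reduces (iii) to proving that $\theta$ is \emph{completely} isometric.

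Complete isometry is the heart of the matter and the step I expect to be the main obstacle, since \cref{Bestellung} shows it can fail for isometric c.p.\ maps with c.p.\ inverse once the order zero hypothesis is dropped. The input from (i) is exactly $\|hb\|=\|b\|$ for all $b\in N_+$, with $0\le h\le 1$ in $N'$. I would first bootstrap this to all powers, $\|h^nb\|=\|b\|$ (a short spectral computation in the commutative C*-algebra generated by $h$ and $b$), and then to the spectral projections $p_\lambda:=\chi_{(1-\lambda,1]}(h)\in N'$: from $\|b\|=\|h^nb\|\le\|p_\lambda b\|+(1-\lambda)^n\|b\|$ and $n\to\infty$ one gets $\|p_\lambda b\|=\|b\|$ for every $b\in N_+$ and every $\lambda\in(0,1)$. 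The key observation is that $p_\lambda\mathcal H$ reduces $N$, so compression to it is a $^*$-representation of $N$ which, being isometric on $N_+$, is faithful, hence \emph{completely} isometric. Consequently $\|(p_\lambda\otimes 1_r)w\|=\|w\|$ for every $w\in\M_r(N)$; applying this to $w=\pi_\theta^{(r)}(x^*x)$ and combining with $h^2p_\lambda\ge(1-\lambda)^2 p_\lambda$ gives
\[\|\theta^{(r)}(x)\|^2=\|(h^2\otimes 1_r)\,\pi_\theta^{(r)}(x^*x)\|\ \ge\ (1-\lambda)^2\|w\|=(1-\lambda)^2\|x\|^2,\]
and letting $\lambda\to 0$ (the reverse inequality being complete contractivity) yields $\|\theta^{(r)}(x)\|=\|x\|$. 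Thus $\theta$ is completely isometric, and with the c.p.\ inverse already in hand it is a complete order embedding.

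For the final assertion I would apply the equivalence just proved to $\theta^\dagger$ itself: since $\theta^\dagger$ is c.p.\ order zero, it suffices to show it is isometric on positive elements, as this already forces it to be a complete order embedding. Given $x\in(A^\dagger)_+$ and an approximate unit $(u_\mu)$ of $A$, the elements $x^{1/2}u_\mu x^{1/2}$ lie in $A$ (because $A$ is an ideal of $A^\dagger$), are dominated by $x$, and increase to $x$, so their norms increase to $\|x\|$. Since $\theta^\dagger$ extends $\theta$ and $\theta$ is isometric on $A_+$, we have $\|\theta^\dagger(x^{1/2}u_\mu x^{1/2})\|=\|x^{1/2}u_\mu x^{1/2}\|\to\|x\|$, while monotonicity of the c.p.\ map $\theta^\dagger$ gives $\theta^\dagger(x^{1/2}u_\mu x^{1/2})\le\theta^\dagger(x)$ and hence $\|\theta^\dagger(x)\|\ge\|x\|$; the reverse inequality is complete contractivity. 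The one point requiring care is that $x^{1/2}u_\mu x^{1/2}\nearrow x$ recovers the full norm of $x$, which uses that $A$ is nondegenerate in $A^\dagger$; this is automatic when $A$ is nonunital, in which case $A^\dagger=A^\sim$ and $\theta^\dagger$ is the canonical extension of \cref{cor: order zero for unital}(i), and the hypothesis $A\ne 0$ guarantees the approximate unit is nontrivial.
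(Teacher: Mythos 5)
Your proof of the equivalence of (i)--(iii) is correct but follows a genuinely different route from the paper's. The paper proves (i)$\Rightarrow$(ii) by the $\Cstar$-identity computation $\|\theta(a)\|^2=\|h^2\pi_\theta(a^*a)\|=\|\theta(|a|)\|^2$ and then outsources the hard implication (ii)$\Rightarrow$(iii) to the cited \cite[Proposition 1.8]{CW1}; you instead give a self-contained proof of (i)$\Rightarrow$(iii). Your steps all check out: (i) forces $\|\theta\|=\|h\|=1$ and injectivity of $\theta$ and $\pi_\theta$, so complete positivity of the inverse comes from \cref{cor: order zero for unital}(iii); the bootstrap from $\|hb\|=\|b\|$ ($b\in\pi_\theta(A)_+$) to $\|p_\lambda b\|=\|b\|$ for the spectral projections $p_\lambda=\chi_{(1-\lambda,1]}(h)\in\pi_\theta(A)'$; and the key point that compression by $p_\lambda$ is a $^*$-homomorphism of $\pi_\theta(A)$ which, being isometric on positives, is faithful and hence \emph{completely} isometric, yielding $\|\theta^{(r)}(x)\|^2\geq(1-\lambda)^2\|x\|^2$. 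This is an attractive argument because it makes visible exactly where the order zero hypothesis rescues the situation of \cref{Bestellung}.

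For the ``moreover'' clause you again diverge: the paper fixes $a+\lambda 1_{A^\dagger}\in A^\dagger_+$ and compares $b_\varepsilon+\lambda 1_{A^\dagger}$ with $b_\varepsilon+\lambda u_\varepsilon$ for the element-dependent local unit $u_\varepsilon=g_\varepsilon(|a|)$, whereas you use a global approximate unit of $A$ and $x^{1/2}u_\mu x^{1/2}\nearrow x$. Your argument is correct when $A$ is nonunital, so that $A^\dagger=A^\sim$ and $A$ is essential, which is what guarantees $\sup_\mu\|x^{1/2}u_\mu x^{1/2}\|=\|x\|$; you are right to isolate nondegeneracy as the crux. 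However, you do not cover the case where $A$ is unital and $A^\dagger$ is the forced unitization, which the statement formally includes: there your argument collapses, since for $x=1_{A^\dagger}-1_A$ one has $x^{1/2}u_\mu x^{1/2}=0$ for every $\mu$. Be aware that in this case the assertion itself is problematic: composing $\theta$ with the quotient $^*$-homomorphism $q\colon A^\dagger\longrightarrow A^\dagger/\C(1_{A^\dagger}-1_A)\cong A$ produces a c.p.c.\ order zero extension $\theta\circ q$ of $\theta$ that is not injective, and the paper's own proof tacitly uses $0\in\sigma(b_\varepsilon)$, which can fail for unital $A$. So your restriction matches the actual scope of the result, but the mismatch with the stated hypothesis should be flagged explicitly rather than left implicit.
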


\begin{proof}
Clearly (iii) implies (i), and that (i) implies (ii) is a direct consequence of \cref{thm: structure thm}, since for any $a\in A$   
\begin{align}
\|\theta(a)\|^2&=\|\theta(a)^*\theta(a)\|\label{eq: nom on pos}\\
&=\|h^2\pi_\theta(a^*a)\|\nonumber\\
&=\|h^2\pi_\theta(|a|^2)\|\nonumber \\
&=\|h^2\pi_\theta(|a|)^2\|\nonumber\\
&=\|\theta(|a|)^2\|\nonumber\\
&=\|\theta(|a|)\|^2.\nonumber 
  \end{align}
That (ii) implies (iii) was shown in \cite[Proposition 1.8]{CW1}. 

 Now suppose $\theta$ is a complete order embedding that extends to a c.p.c.\ order zero map on the (forced) unitization $A^\dagger$ of $A$. 
Note that $\|\theta^\dagger(1_{A^\dagger})\|=1$ since $\theta^\dagger$ was assumed to be c.p.c.\ and $\theta^\dagger(1_{A^\dagger})\geq \theta(a)$ for any $a\in A_+$ with $\|a\|=\|\theta(a)\|=1$.  (We want $A\neq \{0\}$ here.)
By the previous implications, we only need to show that $\theta^\dagger$ is isometric on any fixed $a + \lambda 1_{A^\dagger}\in A^\dagger_+$. Note that in this case $a=a^*$, and we may assume $a\neq 0$ and $\lambda\geq 0$. 

For any $0<\varepsilon<\|a\|$ 
we define $g_\varepsilon\in C_0((0,\|a\|])$ to be linear on $(0,\varepsilon]$ and $1$ on $(\varepsilon, \|a\|]$. Set $u_\varepsilon\coloneqq g_\varepsilon(|a|)$ and define $b_\varepsilon= (a_+-\varepsilon)_+-(a_--\varepsilon)_+$ to be the difference of the $\varepsilon$-cutdowns of $a_+$ and $a_-$, and note that $b_\varepsilon +\lambda 1_{A^\dagger}\geq 0$ by functional calculus. 
Then we have $u_\varepsilon\in A_+$ with $\|u_\varepsilon\|=1$ and $b_\varepsilon\in A$ with $b_\varepsilon=b_\varepsilon^*$, $\|a-b_\varepsilon\|<\varepsilon$, $0\leq b_\varepsilon+\lambda u_\varepsilon$, and $u_\varepsilon b_\varepsilon=b_\varepsilon u_\varepsilon=b_\varepsilon$. Since $0\in \sigma(b_\varepsilon)$,  for $\chi_{\{1\}}(u_\varepsilon)\in A^{**}$, we can identify $\Cstar(b_\varepsilon)^\sim\cong \Cstar(b_\varepsilon, \chi_{\{1\}}(u_\varepsilon))\cong \Cstar(b_\varepsilon, 1_{A^\dagger})$, 
which gives
\begin{align*}
    \|b_\varepsilon + \lambda 1_{A^{\dagger}}\|
    &=\|(b_\varepsilon + \lambda \chi_{\{1\}}(u_\varepsilon))\|\\
    &=\|(b_\varepsilon + \lambda u_\varepsilon) \chi_{\{1\}}(u_\varepsilon)\|\\
    &\leq \|b_\varepsilon+\lambda u_\varepsilon\|.
\end{align*} 
Since $\theta$ is isometric on $A$, we have 
\begin{align*}
    \|a + \lambda 1_{A^{\dagger}}\| &\leq \varepsilon + \|b_\varepsilon + \lambda 1_{A^{\dagger}}\| \\
    &\leq \varepsilon + \|b_\varepsilon+\lambda u_\varepsilon\|\\ 
    &=  \varepsilon + \|\theta(b_\varepsilon+\lambda u_\varepsilon)\|\\
    &\leq \varepsilon +  \|\theta^{\dagger}(b_\varepsilon + \lambda 1_{A^{\dagger}})\|\\
    &\leq 2 \varepsilon + \|\theta^{\dagger}(a + \lambda 1_{A^{\dagger}})\|.
\end{align*}
It follows that $\theta^{\dagger}$ is isometric on $A^{\dagger}_+$ and hence also on $A^{\dagger}$.
\end{proof}


\section{Order Units}\label{sec: order unit}

\noindent Drawing inspiration from order units for abstract operator systems, 
we study local order units for a
(not-necessarily-unital) 
self-adjoint linear subspace $X$ of a $\Cstar$-algebra $B$. Using the ambient $\Cstar$-algebraic structure, we can place more sophisticated assumptions on our local order unit, in particular that $e\in X'$, and then take full advantage of the functional calculus in $B$ to draw conclusions about $X$ and the pair $(X,e)$. 

\begin{definition}\label{span(X,e)}
Let $B$ be a $\Cstar$-algebra and $X\subset B$ a self-adjoint linear subspace. We say $e\in B^1_+$ is a \emph{local order unit} for $X$ if for any self-adjoint element $x\in X$, there exists an $R>0$ so that $Re \geq x$. We call $e$ a \emph{commuting local order unit} for $X$ if $e$ is a local order unit for $X$ and $e\in X'$. 
\end{definition}

\begin{remarks}\label{automatically mou}
(i) We can view a self-adjoint linear subspace $X\subset B$ of a $\Cstar$-algebra as a matrix ordered $^*$-vector space as in \cite[Chapter 13]{Pau02} 
by endowing it with the matrix ordering $\{\M_r(X)\cap \M_r(B)_+\}_r$  it inherits from $B$. In this setting a local order unit for a self-adjoint linear subspace of a $\Cstar$-algebra $B$ 
is a local order unit in the sense of Choi and Effros (see \cite[Chapter 13]{Pau02}) 
for the matrix ordered $^*$-vector space $(X,\{\M_r(X)\cap \M_r(B)_+\}_r)$. 
(Note that if $e$ is a local order unit for $X\subset B$, then it is automatically a matrix local order unit, i.e., $e^{(r)}$ is a local order unit for $\M_r(X)$ for all $r\geq 1$. It is also Archimedean in the sense of \cite[Chapter 13]{Pau02} since $\M_r(B)_+$ is closed.) 

(ii) A novelty of our definition is that we do not require that $e\in X$, in which case it is possible that $X\cap B_+= \{0\}$ with $X\neq \{0\}$, and we are only guaranteed that $X$ contains nontrivial self-adjoint elements. (This problem will disappear when we incorporate the further assumption that $X^2\subseteq eX$, see \cref{X has positive elements}.) 
\end{remarks}

\begin{examples}\label{Order units in a unital A}

(i) 
Let $X$ be a self-adjoint linear subspace of a unital $\Cstar$-algebra $B$ with $1_B\in X$. Then any local order unit of $X$ must be invertible in $B$ by virtue of dominating $1_B$ up to some scalar factor, and so the collection of local order units for $X$ is exactly $\text{GL}(B)\cap B^1_+$. 

(ii) The subspace $C_c((0,1])$ of the non-unital $\Cstar$-algebra $C_0((0,1])$ has local order unit
    $\id_{(0,1]}$. 

(iii) 
    If $\theta\colon A\longrightarrow B$ is a c.p.c.\ map from a unital $\Cstar$-algebra, then $\theta(1_A)$ is a local order unit for $\theta(A)$. 

(iv) It follows from \cref{thm: structure thm} that if $\theta\colon A\longrightarrow B$ is a c.p.c.\ order zero map from a unital $\Cstar$-algebra, then $\theta(1_A)$ is a  commuting local order unit for the self-adjoint linear subspace $\theta(A)\subset B$. 
Even if $A$ is non-unital, we can, in some cases (see for example \cref{unitize order zero'}), leverage the structure theorem for c.p.\ order zero maps (as in Remark \ref{cor: order zero for unital}(i)) to find a c.p.c.\ order zero extension $\theta^{\sim}\colon A^{\sim}\longrightarrow B$ with $\theta^{\sim}(1_{A^{\sim}})=h$ (with $h$ as in Remark \ref{cor: order zero for unital}(i)). In this case $\theta^{\sim}(1_{A^{\sim}})\in B^1_+\cap \theta(A)'$ is a commuting local order unit that is typically \emph{not} in $\theta(A)$. 
\end{examples}

The following two propositions give  technical facts for commuting local order units which will be used throughout the paper.

\begin{proposition}\label{commuting order unit}
    Let $B$ be a $\Cstar$-algebra. 
     \begin{enumerate}[label=\textnormal{(\roman*)}]
     \item For $y,z\in B$ such that $z\geq 0$ and $yz=zy$,\label{eq: hashtag}
    \begin{align}
    z\geq |y|\ \Longleftrightarrow\ z|y|\geq y^*y.\label{comm order unit i} 
    \end{align}
    \item   If $X\subset B$ is a self-adjoint linear subspace, then the following are equivalent for 
      $e\in B^1_+\cap X'$: \label{commuting order unit (b)}
          \begin{enumerate}[label=\textnormal{(\alph*)}]
          \item $e$ is a local order unit for $X$.
          \item For each $x\in X$, there exists an $R>0$ so that $Re\geq |x|$. 
      \end{enumerate} 
     \item  When either of the equivalent conditions in \ref{commuting order unit (b)} hold, the $\Cstar$-algebra $\Cstar(X)$ generated by $X$ in $B$ is contained in the hereditary $\Cstar$-algebra $\overline{eBe}$ generated by $e$. In particular, for any $a\in \Cstar(X)$, $\lim_j \|h_j(e)ea - a\| =0$, where $\{h_j\}_j$ are as in \eqref{hk}. \label{approx id}
       \end{enumerate}
\end{proposition}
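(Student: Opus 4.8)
The overall strategy is that in each part a positive element commuting with the others generates, together with them, a commutative $\Cstar$-subalgebra, so the claim reduces to an elementary pointwise statement via the Gelfand transform. For (i), I would first observe that $yz=zy$ forces $zy^*=y^*z$ by taking adjoints, so $z$ commutes with $y^*y$ and hence with $|y|=(y^*y)^{1/2}$; thus $z$ and $|y|$ are commuting positive elements generating a commutative $C\cong C_0(\Omega)$, say $z\mapsto f\ge 0$ and $|y|\mapsto g\ge 0$. Both sides of \eqref{comm order unit i} are differences of elements of $C$, so their positivity may be tested pointwise. The forward implication is then immediate ($f\ge g$ gives $fg\ge g^2$ since $g\ge 0$), and conversely $fg\ge g^2$ forces $f\ge g$ at points where $g>0$ (divide), while where $g=0$ one has $f\ge 0=g$ because $z\ge 0$; hence $f\ge g$ everywhere, i.e.\ $z\ge|y|$.

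For (ii), the implication (b)$\Rightarrow$(a) is trivial, since for self-adjoint $x$ one has $|x|\ge x$, so $Re\ge|x|\ge x$. For (a)$\Rightarrow$(b) the key device is to dilate $x$ to a self-adjoint matrix: fix $x\in X$ and set $s\coloneqq\begin{pmatrix} 0 & x \\ x^* & 0\end{pmatrix}\in\M_2(X)_{\mathrm{s.a.}}$, whose modulus is $|s|=\mathrm{diag}(|x^*|,|x|)$. Since $e\in X'$, the element $e^{(2)}=\mathrm{diag}(e,e)$ lies in $\M_2(B)^1_+\cap\M_2(X)'$ and is a local order unit for $\M_2(X)$ by \cref{automatically mou}(i). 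Applying this to $\pm s$ and taking the larger constant yields $R>0$ with $Re^{(2)}\ge s$ and $Re^{(2)}\ge -s$; since $e^{(2)}$ commutes with $s$, passing to the commutative $\Cstar$-algebra they generate and noting that $F\ge\pm G$ forces $F\ge|G|$ pointwise upgrades this to $Re^{(2)}\ge|s|$. Comparing lower-right corners of the positive matrix $Re^{(2)}-|s|$ gives $Re\ge|x|$, which is (b).

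For (iii), I would first show $X\subseteq\overline{eBe}$. Fix $x\in X$; since $e$, and hence each $h_j(e)$, commutes with $x$ and $x^*$, and $0\le h_j(e)e\le 1$, one computes $\|(1-h_j(e)e)x\|^2\le\|x^*(1-h_j(e)e)x\|=\|(1-h_j(e)e)|x|^2\|$, using $(1-h_j(e)e)^2\le 1-h_j(e)e$ and commuting $h_j(e)e$ past $x^*$. Now (b) gives $|x|\le Re$, so passing to the commutative $\Cstar$-algebra generated by the commuting positive pair $e,|x|$ and checking pointwise yields $\|(1-h_j(e)e)|x|^2\|\le R^2/j^2$: where $e\ge 1/j$ the factor $1-h_j(e)e$ vanishes, and where $e<1/j$ one has $|x|^2\le R^2e^2<R^2/j^2$. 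Hence $h_j(e)ex\to x$; running the same estimate on $x^*$ and sandwiching gives $h_j(e)\,e\,x\,h_j(e)e\to x$ with $h_j(e)e\,x\,h_j(e)e=e\big(h_j(e)xh_j(e)\big)e\in eBe$, so $x\in\overline{eBe}$. As $\overline{eBe}$ is a $\Cstar$-subalgebra containing $X$, we obtain $\Cstar(X)\subseteq\overline{eBe}$. Finally $\{h_j(e)e\}_j$ increases to the support projection of $e$ and is an approximate unit for $\overline{eBe}$ (equivalently, $\|h_j(e)e^2-e\|\le 1/j$ gives the approximate-unit property on the dense subset $eBe$), so $h_j(e)ea\to a$ for every $a\in\Cstar(X)$, which is the ``in particular'' claim.

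I expect the conceptual crux to be (a)$\Rightarrow$(b): the hypothesis controls only self-adjoint elements, and it is the self-adjoint dilation $s$ together with the commutative reduction that converts this into control of the modulus $|x|$. The functional calculus estimate in (iii) is then routine once (b) is available.
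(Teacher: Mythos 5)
Your proposal is correct, but parts of it diverge from the paper's argument. Part (i) and the implication (b)$\Rightarrow$(a) of part (ii) are exactly the paper's proofs (reduce to $C_0(\Omega)$ and check pointwise). For (a)$\Rightarrow$(b), the paper avoids any matrix dilation: it picks $R$ with $Re\geq \pm(x+x^*)$ and $Re\geq \pm i(x-x^*)$, squares these order relations using commutativity to get $R^2e^2\geq (x+x^*)^2$ and $R^2e^2\geq -(x-x^*)^2$, and then uses the identity $\tfrac12\big((x+x^*)^2-(x-x^*)^2\big)=x^*x+xx^*\geq x^*x$ to conclude $R^2e^2\geq |x|^2$, hence $Re\geq|x|$. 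Your route via the self-adjoint dilation $s$ and the corner compression of $Re^{(2)}-|s|$ is also valid, but it leans on the assertion in \cref{automatically mou}(i) that a local order unit is automatically a matrix local order unit, which the paper states without proof; the paper's computation is self-contained and needs no amplification, whereas your argument would ideally include (or at least acknowledge) the short spectral-projection argument behind that remark. For (iii), the paper argues by heredity: from $0\leq Re+x\leq Re+|x|\leq 2Re\in\overline{eBe}$ it deduces $Re+x\in\overline{eBe}$ and hence $x\in\overline{eBe}$, and then simply invokes that $\{h_j(e)e\}_j$ is an approximate identity for $\overline{eBe}$; your explicit estimate $\|(1-h_j(e)e)x\|\leq R/j$ reaches the same conclusion more quantitatively and is a perfectly good, if slightly longer, alternative.
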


\begin{proof}
(i) For $s,t\in [0,\infty)$, we have $s\geq t \longleftrightarrow st\geq t^2$. Hence \eqref{comm order unit i} follows from the fact that there is an isomorphism $\Cstar(z,y^*y)\cong C_0(\Omega)$ for some locally compact $\Omega\subset \sigma(z)\times \sigma(y^*y)$  which sends $z$ and $|y|$ to the functions $((s,t)\mapsto s)$ and $((s,t)\mapsto t)$, respectively.

(ii)
(b) $\Longrightarrow$ (a) since $|x|\geq x$ for all self-adjoint $x\in X$. To see that (a) $\Longrightarrow$ (b), fix $x\in X$. Then there exists $R>0$ so that $Re\geq x^*+x$ and $Re\geq i(x-x^*)$. Since $e$ and $x+x^*$ commute, we also obtain $R^2e^2\geq (x+x^*)^2$ and likewise $R^2e^2\geq (i(x-x^*))^2$. Then 
\begin{align*}
    R^2e^2&\geq \textstyle{\frac{1}{2}}((x+x^*)^2-(x-x^*)^2)=x^*x+xx^*\geq x^*x,
\end{align*} 
and hence $Re\geq |x|$.

(iii) To see that $\Cstar(X)\subset \overline{eBe}$, it suffices to show that $x \in \overline{eBe}$ for any self-adjoint $x\in X$. For $x=x^*\in X$ there exists $R>0$ so that $Re\pm x\geq 0$, and so 
$Re\geq |x|\geq 0$. 
Hence $\Re+|x|\in \overline{eBe}$ and so also  $Re+x\in \overline{eBe}$ since $Re+|x|\geq Re+x\geq 0$. It follows that 
$x=Re + x -Re\in \overline{eBe}$, and so $\Cstar(X)\subset \overline{eBe}$.
Since $\{h_j(e)e\}_j$ forms an (increasing) approximate identity of positive contractions for $\overline{eBe}$, the final claim follows.
\end{proof}

\begin{proposition}\label{order unit}
Let $B$ be a $\Cstar$-algebra and $X\subset B$ a self-adjoint linear subspace with local order unit $e\in B_+^1\cap X'$. 
Then the following hold for all $x \in \Span\{X,e\}$.
\begin{enumerate}[label=\textnormal{(\roman*)}]
    \item If $e^nx=0$ for some $n\geq 1$, then $x=0$. (Nondegeneracy)
    \item If $ex=(ex)^*$, then $x=x^*$.
    \item If $ex\geq 0$, then $x\geq 0$. 
\end{enumerate}
Moreover, these claims hold for all matrix amplifications $\M_r(X)$ with order units $e^{(r)}$.
\end{proposition}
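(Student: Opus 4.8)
The plan is to isolate a single nondegeneracy lemma and deduce all three statements (and their matrix versions) from it. The lemma: if $z\in\overline{eBe}$ satisfies $ez=0$, then $z=0$. This is where the hereditary structure is indispensable. By \cref{commuting order unit}\,(iii) we have $\Cstar(X)\subset\overline{eBe}$, and its proof records that $\{h_j(e)e\}_j$ (with $h_j$ as in \eqref{hk}) is an approximate unit for $\overline{eBe}$; hence $z=\lim_j h_j(e)e\,z=\lim_j h_j(e)(ez)=0$. I would also note at the outset that every $x\in\Span\{X,e\}$ lies in $\overline{eBe}$ (since $e\in\overline{eBe}$ and $X\subset\Cstar(X)\subset\overline{eBe}$) and commutes with $e$ (as $e\in X'$). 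To prove \emph{(i)} I would then run a downward induction: from $e^nx=0$, if $e^kx=0$ for some $k\ge 1$ then $e^{k-1}x\in\overline{eBe}$ and $e\,(e^{k-1}x)=0$, so the lemma gives $e^{k-1}x=0$; at $k=0$ this yields $x=0$.

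For \emph{(ii)}, since $e$ is self-adjoint and commutes with $x^*\in\Span\{X,e\}$, the hypothesis $ex=(ex)^*$ rewrites as $ex=x^*e=ex^*$, i.e.\ $e(x-x^*)=0$; as $x-x^*\in\Span\{X,e\}$, part (i) forces $x=x^*$. For \emph{(iii)}, the hypothesis $ex\ge0$ makes $ex$ self-adjoint, so (ii) first gives $x=x^*$, after which I would decompose $x=x_+-x_-$ via continuous functional calculus. Because $e$ commutes with $x$, it commutes with $x_\pm\in\Cstar(x)\subset\overline{eBe}$, so $ex_\pm=e^{1/2}x_\pm e^{1/2}\ge0$ and $(ex_+)(ex_-)=e^2x_+x_-=0$. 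Thus $ex=ex_+-ex_-$ is the orthogonal decomposition of $ex$ into positive and negative parts; positivity of $ex$ forces $ex_-=0$, and the nondegeneracy lemma then gives $x_-=0$, i.e.\ $x=x_+\ge0$.

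For the matrix amplifications, the point is simply that $e^{(r)}=\mathrm{diag}(e,\dots,e)\in\M_r(B)_+^1$ remains a commuting local order unit for $\M_r(X)\subset\M_r(B)$: it commutes with $\M_r(X)$ because $e\in X'$, and it is a local order unit by \cref{automatically mou}\,(i). Hence the triple $(\M_r(B),\M_r(X),e^{(r)})$ satisfies all the standing hypotheses, and the three statements follow by applying the arguments above verbatim inside $\M_r(B)$. I expect the only genuinely nontrivial step to be the nondegeneracy lemma, i.e.\ that $e$ acts injectively on $\overline{eBe}$; once the approximate unit $\{h_j(e)e\}$ supplies this, statements (ii) and (iii) reduce to (i) purely through commutativity and the continuous functional calculus.
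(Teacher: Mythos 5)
Your proof is correct, but it takes a genuinely different route from the paper's for parts (i) and (iii). The paper proves (i) by passing to the commutative $\Cstar$-algebra $\Cstar(e,x)\cong C_0(\Omega)$ for self-adjoint $x$, observing that $e^nx=0$ forces $st=0$ on $\Omega$ while the local order unit inequality $Rs\geq |t|$ then forces $\sigma(x)=\{0\}$, and finally reduces general $x$ to the self-adjoint case via $i(x^*-x)$; it proves (iii) by a similar joint-spectrum contradiction argument. You instead funnel everything through a single nondegeneracy lemma on the hereditary subalgebra --- $z\in\overline{eBe}$ and $ez=0$ imply $z=0$, via the approximate unit $\{h_j(e)e\}_j$ --- which handles arbitrary (not just self-adjoint) elements at once, and you derive (iii) from the uniqueness of the orthogonal decomposition $x=x_+-x_-$ rather than from the spectrum of $C_0(\Omega)$. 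Both arguments ultimately lean on the local order unit hypothesis: the paper uses it directly through $Rs\geq|t|$, while you use it only through the containment $\Cstar(X)\subset\overline{eBe}$ from \cref{commuting order unit}(iii). Your version is arguably more economical and uniform (one lemma, no case split on self-adjointness in (i)); the paper's is more explicit about where the order unit inequality bites. All the individual steps in your argument check out: $\Span\{X,e\}\subset\overline{eBe}$, the downward induction on $k$, the identity $ex_\pm=e^{1/2}x_\pm e^{1/2}\geq 0$ with $(ex_+)(ex_-)=0$, and the application of the lemma to $x_-\in\Cstar(x)\subset\overline{eBe}$ (correctly stated for all of $\overline{eBe}$, not merely $\Span\{X,e\}$). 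The matrix amplification step matches the paper's.
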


\begin{proof}
Note that if $e$ is a  commuting local order unit for $X$, then it will also be a  commuting local order unit for $\Span\{X,e\}$. 
Hence, for notational simplicity, 
we reduce to the assumption $X=\Span\{X,e\}$. 
Fix $x\in X$. 

For (i), we first handle the case where $x=x^*$. Since $ex=xe$, we may identify $\Cstar(e,x)$ with $C_0(\Omega)$ for some locally compact Hausdorff space $\Omega\subset \sigma(e)\times \sigma(x)$. If $e^nx=0$ for some $n\geq 1$, then 
for each pair $(s,t)\in \Omega$ either $s=0$ or $t=0$. Since $e$ is a local order unit, by \cref{commuting order unit}(ii) there exists an $R>0$ such that $Rs\geq |t|$ for all $(s,t)\in \Omega$. Together these imply that $\sigma(x)=\{0\}$, and so $x=0$. 

Now, let $x\in X$ be any element. If $e^nx=0$ for some $n\geq 1$, then 
$e^nx^*=(e^nx)^*=0$, and so 
$e^n (i(x^*-x))=0$ where 
$i(x^*-x)$ is self-adjoint. It follows from the part of the proof above that $i(x^*-x)=0$, which means $x$ is self-adjoint. But in that case, our assumption that $e^nx=0$ implies $x=0$. 

From (i) we immediately get (ii). Indeed, if $x\in X$ and $ex=(ex)^*=ex^*$, then $e(x-x^*)=0$, and so $x=x^*$.

For (iii), assume $ex\geq 0$ for some $x\in X$. Then by (ii), we know $x=x^*$, and so we can again identify $\Cstar(e,x)$ with $C_0(\Omega)$ as in the proof of (i). Suppose $t_0\in \sigma(x)\cap(-\infty,0)$, and let $s_0\in \sigma(e)$ such that $(s_0,t_0)\in \Omega$. 
Since $e$ is a local order unit, there exists $R>0$ such that $Rs_0\geq |t_0|>0$. But this implies $s_0>0$ and hence $s_0t_0<0$, a contradiction. 

Finally, since $e$ is automatically a matrix local order unit for $X$ (as mentioned in \cref{automatically mou}), it follows that for any $r\geq 1$, $e^{(r)}\in \M_r(B)_+^1\cap\M_r(X)'$ is a local order unit for $\M_r(X)$, and so claims (i)-(iii) hold for all matrix amplifications. 
\end{proof}

\begin{remark}\label{bdd below perp gives isoclass} 

Suppose $A,B$, and $C$ are $\Cstar$-algebras with $A$ and $B$ unital. If there exist bounded below c.p.\ order zero maps $\psi\colon A\longrightarrow B$ and $\varphi \colon C\longrightarrow B$ with $\psi(A)=\varphi(C)$ and $\psi(1_A)=\varphi(1_C)$, then $\theta\coloneqq \varphi^{-1}\circ \psi\colon A\longrightarrow C$ is a unital $^*$-isomorphism -- regardless of the norms of the maps. Indeed, first note that $\theta$ is a well-defined, bijective, bounded $^*$-linear map, so it suffices to show that $\theta(ab)=\theta(a)\theta(b)$ for fixed $a,b\in A$.  The structure theorem for order zero maps tells us that 
\begin{align}\label{eq: hom}
\psi(1_A)\psi(ab)=\psi(a)\psi(b)=\varphi(\theta(a))\varphi(\theta(b))=\varphi(1_C)\varphi(\theta(a)\theta(b)).
\end{align}
In Example \ref{Order units in a unital A}(iv), we saw that $X\coloneqq \psi(A)=\varphi(C)$ and $e\coloneqq \psi(1_A)=\varphi(1_C)$ satisfy the assumptions of \cref{order unit}, and so applying nondegeneracy (\cref{order unit}(i)) to \eqref{eq: hom} tells us that $\psi(ab)\\ =\varphi(\theta(a)\theta(b))$. So we have 
\begin{align*}
    \theta(ab)=\varphi^{-1}(\psi(ab))=\varphi^{-1}(\varphi(\theta(a)\theta(b)))=\theta(a)\theta(b).
\end{align*}
This fact can also be deduced readily from other results in the literature (e.g. Corollary D with Theorem 2.2 from \cite{GT1} (via \cite{GT2})).
\end{remark}

Notice that for the unit of a $\Cstar$-algebra, the $R>0$ by which we scale $1_A$ in order to dominate some $a\in A_{\mathrm{s.a.}}$ can always be taken to be $\|a\|$. This uniformity motivates the following.

\begin{definition}\label{def 68}
Let $B$ be a $\Cstar$-algebra and $X\subset B$ a self-adjoint linear subspace. We say an element $e\in B_+^1$ is an \emph{order unit}  for $X$ if there exists a $K>0$ so that $K\|x\|e\geq x$ for every $x\in X_{\mathrm{s.a.}}$. 
We call the minimal such $K$ the {\em scaling factor} of $e$. 
\end{definition}
\begin{remarks}\label{rmk: Order scale at least 1} (i) If $X\neq \{0\}$, 
the scaling factor for a commuting         order unit must be at least 1. Indeed, suppose $x\in X_{\text{s.a.}}$ with $\|x\|=1$ and $K\|x\|e=Ke\geq x$ for some $K>0$. Let $\Omega\subset \sigma(e)\times \sigma(x)$ be a locally compact Hausdorff space so that $\Cstar(e,x)\cong C_0(\Omega)$.  Then $K\|x\|e=Ke\geq x$ implies that $Ks\geq |t|$ for all $(s,t)\in \Omega$. Since the spectral radius of $x$ is $1$, we must have some $s\in \sigma(e)\subset [0,1]$ with $Ks\geq 1$. Hence $K\geq 1$. 

(ii) Note also that the 
scaling factor is indeed a minimum since the set $\{K\geq 1 \mid K\|x\|e\geq x,\ \forall\  x\in X_{\mathrm{s.a.}}\}$ is an interval and $B_+$ is closed.

(iii) Just as a bounded map is not automatically completely bounded, if $e$ is an        order unit for $X$, the same scaling factor may not suffice for all matrix amplifications (unless the scaling factor is $1$ and $X$ is closed; cf.\ \cref{completely isometric}). 
\end{remarks}

\begin{examples}\label{ex: bdd below UOU}
(i) We saw in Example \ref{Order units in a unital A}(i) that the local order units for a \emph{unital} self-adjoint linear subspace $X$ of a unital $\Cstar$-algebra $B$ are just $\mathrm{GL}(B)\cap B^1_+$. These are also the        order units, and the scaling factor for $e\in\mathrm{GL}(B)\cap B_+^1$ is the minimum value $R>0$ so that $Re\geq 1_B$, i.e., $\|e^{-1}\|$. 
In particular, that means that if $1_B\in X$, then $1_B$ is the unique 
matrix (local) order unit in $X$ with scaling factor $R=1$.  

(ii) If $\theta\colon A\longrightarrow B$ is a bounded below c.p.\ map 
from a unital $\Cstar$-algebra $A$, then the image of the unit becomes an        order unit for $\theta(A)$ with scaling factor at most $\|\theta^{-1}\|$ since  
\begin{align}\label{eq: order scale upper bd}
    \|\theta^{-1}\|\|\theta(a)\|\theta(1_A)\geq \|a\|\theta(1_A)\geq \theta(a)
\end{align}
for all $\theta(a)\in \theta(A)$. 
\end{examples}

When $\theta$ is order zero, $\|\theta^{-1}\|$ is the scaling factor of $\theta(1_A)$ as the following proposition shows. 

\begin{proposition}\label{invertible c.p. order zero and order scale}
Let $\theta\colon A\longrightarrow B$ be an injective c.p.c.\ order zero map. 
\begin{enumerate}[label=\textnormal{(\roman*)}]
    \item When $A$ is unital, we define $R_0\coloneqq 0$ and \[R_b\coloneqq\min\{R\geq 1 \mid R\|\theta(b)\|\theta(1_A)\geq \theta(b)\}\] for $b\in A_{\mathrm{s.a.}}\backslash\{0\}$. Then $R_{|a|}\|\theta(a)\|=\|a\|$ for each $a\in A\backslash\{0\}$. 
If $\|\theta(a)\|\theta(1_A)\geq \theta(a)$ for all $a\in A_+$, then $\theta$ is a complete order embedding.
\item If $\theta$ is bounded below and extends to a c.p.c.\ order zero map $\theta^{\sim}\colon A^{\sim}\longrightarrow B$, 
then $\theta^{\sim}$ is also bounded below, and  $\theta^{\sim}(1_{A^{\sim}})$ is an        order unit for $\theta(A)$ with scaling factor $\|\theta^{-1}\|$. When $\theta$ is isometric, this is also its scaling factor for $\theta^{\sim}(A^{\sim})$. In particular, the scaling factor of $\theta^{\sim}(1_{A^{\sim}})$ is $1$ if and only if $\theta^\sim$ is completely isometric.
\end{enumerate}
\end{proposition}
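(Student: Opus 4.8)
The plan is to run everything through the structure theorem for order zero maps (\cref{thm: structure thm}): with $C=\Cstar(\theta(A))$ we write $\theta(a)=h\pi_\theta(a)=h^{1/2}\pi_\theta(a)h^{1/2}$, where $0\le h\in\mathcal{M}(C)\cap C'$ is a contraction commuting with $\pi_\theta(A)$ and $h=\theta(1_A)$ when $A$ is unital. Two consequences of injectivity of $\theta$ are the workhorses. First, $\pi_\theta$ is injective (if $\pi_\theta(a)=0$ then $\theta(a)=h\pi_\theta(a)=0$, so $a=0$), hence isometric, so $\|\pi_\theta(a)\|=\|a\|$. Second, in a faithful nondegenerate representation $C\subseteq\mathcal{B}(H)$ (extended to $\mathcal{M}(C)$) the element $h$ is injective: if $h\xi=0$ then $\theta(a)\xi=\pi_\theta(a)h\xi=0$ for every $a$, so $\xi$ is annihilated by all of $\Cstar(\theta(A))=C$ and thus $\xi=0$. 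Injectivity of $h$ yields $h_j(h)h\to 1$ strongly (the functions $h_j$ of \eqref{hk} satisfy $h_j(s)s\to\chi_{(0,1]}(s)$, and $\chi_{\{0\}}(h)=0$); this is the analytic device that lets me ``divide by $h$''.

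For \cref{invertible c.p. order zero and order scale}(i) I would first reduce to a single norm computation. Since $\|\theta(a)\|=\|\theta(|a|)\|$ by \eqref{eq: nom on pos} and $\|a\|=\||a|\|$, the identity $R_{|a|}\|\theta(a)\|=\|a\|$ is equivalent to the claim that for $b\in A_+\setminus\{0\}$ the minimal $S\ge0$ with $Sh\ge\theta(b)$, call it $S_{\min}$, equals $\|b\|$. Indeed, in $\Cstar(h,\pi_\theta(b))$ the commuting positive elements $h,\pi_\theta(b)$ show that $Sh\ge\theta(b)=h\pi_\theta(b)$ holds exactly when $S$ dominates $\pi_\theta(b)$ on the spectral region where $h>0$, whence $R_b\|\theta(b)\|=\max(\|\theta(b)\|,S_{\min})$. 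The upper bound $\|b\|h\ge\theta(b)$ is clear from $\|b\|=\|\pi_\theta(b)\|\ge\pi_\theta(b)$ and commutativity, giving $h(\|b\|-\pi_\theta(b))\ge0$. For the matching lower bound I take any $S$ with $Sh\ge\theta(b)$, conjugate by $h_j(h)^{1/2}$ to obtain $S\,h_j(h)h\ge h_j(h)h\,\pi_\theta(b)$, and pass to strong limits in the faithful representation: since $h_j(h)h\to 1$ strongly, evaluating the resulting quadratic-form inequality gives $S\ge\pi_\theta(b)$, so $S\ge\|b\|$. Hence $S_{\min}=\|b\|\ge\|\theta(b)\|$ and $R_b\|\theta(b)\|=\|b\|$. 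The second assertion of (i) then follows at once: its hypothesis forces $R_a=1$ for every $a\in A_+$, so $\|\theta(a)\|=\|a\|$ on positives, and \cref{a: isometric unitization} upgrades this to a complete order embedding.

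For part (ii) (with $A$ non-unital, the unital case being $\theta^{\sim}=\theta$) I would argue $\theta^{\sim}$ is bounded below as follows. It is injective: if $h=\theta^{\sim}(1_{A^{\sim}})$ were equal to some $\theta(a_0)$, then $h(1-\pi_\theta(a_0))=0$ together with injectivity of $h$ gives $\pi_\theta(a_0)=1$, after which injectivity of $\pi_\theta$ forces $a_0$ to be a unit for $A$, a contradiction; so $h\notin\theta(A)$ and $\theta^{\sim}$ is injective. Since $\theta$ is bounded below, $\theta(A)$ is closed (\cref{poster}(iii)), and $\theta^{\sim}(A^{\sim})=\theta(A)+\C h$ is closed as a closed subspace plus a one-dimensional one; an injective map with closed range is bounded below (\cref{poster}(iii) again), giving the first claim. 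That $h$ is an order unit for $\theta(A)$ with scaling factor $\|\theta^{-1}\|$ follows from part (i): for self-adjoint $a$ one has $\|\theta^{-1}\|\,\|\theta(a)\|h\ge\|a\|h\ge\theta(|a|)\ge\theta(a)$, so the scaling factor is at most $\|\theta^{-1}\|$, while the key lemma ($S_{\min}=\|a\|$ for $a\ge0$) shows no smaller constant can work. When $\theta$ is isometric it is a complete order embedding (\cref{a: isometric unitization}), hence so is its extension $\theta^{\sim}$ to $A^{\sim}=A^{\dagger}$ by the last part of \cref{a: isometric unitization}; applying the scaling-factor computation to the injective c.p.c.\ order zero map $\theta^{\sim}$ then shows $h$ has scaling factor $\|(\theta^{\sim})^{-1}\|=1=\|\theta^{-1}\|$ for the larger space $\theta^{\sim}(A^{\sim})$. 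The final ``in particular'' is the chain: the scaling factor of $h$ equals $\|\theta^{-1}\|$, which is $1$ iff $\theta$ is isometric iff (\cref{a: isometric unitization}) $\theta$ is a complete order embedding iff $\theta^{\sim}$ is a complete order embedding iff $\theta^{\sim}$ is completely isometric.

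The main obstacle is the lower bound in the key lemma. Purely norm-level manipulations are circular: multiplying $Sh\ge\theta(b)$ by powers of $h_j(h)$ and letting $j\to\infty$ in norm only recovers $Sh\ge\theta(b)$, because each multiplication by $h$ damps exactly the spectral region near $0$ where the estimate is tight. The crux is therefore the passage to a faithful nondegenerate representation, where injectivity of $h$ makes $h_j(h)h\to 1$ in the strong operator topology; this is the one genuinely non-algebraic step and the reason $\|\pi_\theta(b)\|=\|b\|$ can be read off. Everything else is bookkeeping with the structure theorem and \cref{a: isometric unitization}.
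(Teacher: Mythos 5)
Your proposal is correct, and its overall skeleton (reduce to positive elements via $\|\theta(a)\|=\|\theta(|a|)\|$, compute the scaling factor as a supremum of the $R_a$, and finish with \cref{a: isometric unitization}) matches the paper's. The one genuinely different ingredient is how you obtain the lower bound $R_c\|\theta(c)\|\geq\|c\|$ for $c\in A_+$: the paper simply applies Remark \ref{cor: order zero for unital}(iii) -- an injective order zero map reflects positivity -- to the element $R_c\|\theta(c)\|\theta(1_A)-\theta(c)\geq 0$, pulling it back to $R_c\|\theta(c)\|1_A-c\geq 0$ in $A$ and reading off the norm inequality. You instead re-derive the needed inequality from scratch: passing to a faithful nondegenerate representation, noting that $h$ is injective there (which, as you observe, needs only nondegeneracy, not injectivity of $\theta$), and using the SOT convergence $h_j(h)h\to 1$ to ``divide by $h$'' in the inequality $Sh\geq h\pi_\theta(b)$ and conclude $S\geq\pi_\theta(b)$. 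This is a correct and self-contained substitute for the paper's citation; it costs you a representation-theoretic detour but buys transparency about exactly where injectivity enters. Similarly, in (ii) you prove injectivity of $\theta^\sim$ by showing $h\notin\theta(A)$ via injectivity of $h$ and $\pi_\theta$, where the paper reaches the same contradiction (a preimage of $h$ would be a unit for $A$) through the nondegeneracy statement \cref{order unit}(i); these are the same argument in different clothing. The remaining steps -- closedness of $\theta^\sim(A^\sim)$ as a closed subspace plus a finite-dimensional one, the two-sided estimate pinning the scaling factor at $\|\theta^{-1}\|$, and the chain of equivalences for the isometric case -- coincide with the paper's proof.
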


\begin{remark}
When $A$ is unital, then $\theta^\sim=\theta$, and \cref{invertible c.p. order zero and order scale}(ii) tells us $\theta(1_A)$ is an order unit for $\theta(A)$ with order scale $\|\theta^{-1}\|$. When $A$ is not unital, applying this same argument to $A^\sim$ tells us $\theta^{\sim}(1_{A^{\sim}})$ is an order unit for $\theta^\sim(A^\sim)$ with scaling factor $\|(\theta^{\sim})^{-1}\|$. 
Also note that the first claim in (ii) is about the scaling factor of $\theta^{\sim}(1_{A^{\sim}})$ for $\theta(A)$, not the scaling factor of $\theta^{\sim}(1_{A^{\sim}})$ for $\theta^\sim(A^\sim)$, which, as we have just explained, is $\|(\theta^{\sim})^{-1}\|$. 
\end{remark}

\begin{proof}
(i) Assume $A$ is unital, and let $b\in A_{\text{s.a.}}\backslash\{0\}$. 
Since $\theta$ is positive, $\textstyle{\frac{\|b\|}{\|\theta(b)\|}}\|\theta(b)\|\theta(1_A)\geq \theta(b)$, and so the interval $\{R\geq 1 \mid R\|\theta(b)\|\theta(1_A)\geq \theta(b)\}$ is nonempty. Since $B_+$ is closed and $\theta$ is continuous, this interval has a minimum, and so $R_b$ is well-defined and moreover $\textstyle{\frac{\|b\|}{\|\theta(b)\|}}\geq R_b$.

Let $c\in A_+$. Since $\theta$ is injective c.p.c.\ order zero and $R_c\|\theta(c)\|\theta(1_A) - \theta(c)\geq 0$, Remark \ref{cor: order zero for unital}(iii) tells us that $\theta^{-1}\big(R_c\|\theta(c)\|\theta(1_A)-\theta(c)\big) = R_c\|\theta(c)\|1_A- c\geq 0$. Hence we must also have $R_c\|\theta(c)\|\geq \|c\|$, and so $R_c\|\theta(c)\|= \|c\|$. Then it follows from \eqref{eq: nom on pos} that $R_{|a|}\|\theta(a)\|= \|a\|$ for any $a\in A$.

Now suppose $R_b=1$ for all $b\in A_+\backslash\{0\}$. Then it follows 
that $\theta$ is isometric on $A_+$, and hence \cref{prop: isom perp is coe} tells us $\theta$ is a complete order embedding. 

(ii) Now assume $\theta$ is bounded below and $A$ is non-unital. We first observe that $\theta^{\sim}$ is also bounded below, or equivalently injective with closed image. Indeed, as Banach spaces,\footnote{We use here the well-known Banach space fact that a linear subspace of a Banach space $\mathcal{X}$ is closed if and only if its linear span with any finite-dimensional subspace of $\mathcal{X}$ is closed.\label{B-space closures1}} 
$\theta(A)$ is closed if and only if $\theta^{\sim}(A^\sim)=\Span\{\theta(A),\theta^{\sim}(1_{A^{\sim}})\}$ is, and so we only need to verify that $\theta^\sim$ is injective. 
To that end, let $a\in A$ and $\lambda \in \C$ with $\theta^{\sim}(a+\lambda 1_{A^{\sim}})=\theta(a)+\lambda \theta^{\sim}(1_{A^{\sim}})=0$. Then either $\theta(a)=0$ and $\lambda=0$, whence $a+\lambda 1_{A^{\sim}}=0$ since $\theta$ is injective, or $u\coloneqq -a/\lambda\in A$ maps to $\theta^{\sim}(1_{A^{\sim}})$. 
But then $\theta^{\sim}(1_{A^{\sim}})\theta^{\sim}(b)=\theta^{\sim}(u)\theta^{\sim}(b)= \theta^{\sim}(1_{A^{\sim}})\theta^{\sim}(ub)$ for any $b\in A$. 
By nondegeneracy (\cref{order unit}(i)), it follows that $\theta(b)=\theta(ub)$ and similarly $\theta(b)=\theta(bu)$ for all $b\in A$. 
It follows from injectivity that $u$ is the unit of $A$, contradicting our assumption. Hence $\theta^{\sim}$ is also bounded below. 

We then know from Example \ref{ex: bdd below UOU}(ii) that $\theta^{\sim}(1_{A^{\sim}})$ is an        order unit for $\theta^{\sim}(A^{\sim})$ and hence also for $\theta(A)$. It remains to check that its scaling factor for $\theta(A)$ is $\|\theta^{-1}\|$.

Let $R$ denote the scaling factor of $\theta^{\sim}(1_{A^{\sim}})$ for $\theta(A)$. Then $R\leq \|\theta^{-1}\|$ by \eqref{eq: order scale upper bd}. Note that if $b\in A_{\mathrm{s.a.}}$ then $R_{b}\leq R_{|b|}$ 
since
\[R_{|b|}\|\theta(b)\|\theta^{\sim}(1_{A^{\sim}})\overset{\eqref{eq: nom on pos}}{=}R_{|b|}\|\theta(|b|)\|\theta^{\sim}(1_{A^{\sim}})\geq \theta(|b|)\geq \theta(b).\]
Putting all this together, we have using (i)
\begin{align*}
\|\theta^{-1}\|&\geq R\\
&= \textstyle{\sup_{a\in A_{\mathrm{s.a.}}} R_a}\\
&= \textstyle{\sup_{a\in A_+} R_{a}}\\
&=\textstyle{\sup_{a\in A}R_{|a|}}\\\
&= \textstyle{\sup_{a\in A\backslash\{0\}}\frac{\|a\|}{\|\theta(a)\|}}\\
&=\|\theta^{-1}\|.
\end{align*}

It follows that $R=1$ if and only if $\theta$ is isometric if and only if $\theta$ is completely isometric if and only if $\theta^\sim$ is completely isometric by \cref{a: isometric unitization}.  
Applying the result now to $\theta^{\sim}$ tells us the scaling factor of  $\theta^{\sim}(1_{A^{\sim}})$ for $\theta^{\sim}(A^{\sim})$ is also $1$. 
\end{proof}

\begin{example}\label{Psi1}
 Using the same arguments as in \cite[Lemma 3.4]{CW1}, one shows that a system of c.p.\ approximations $A\xlongrightarrow {\psi_n}F_n\xlongrightarrow {\varphi_n}A$ of a separable nuclear $\Cstar$-algebra $A$ with $\psi_n$ and $\varphi_n\circ \psi_n$ all c.p.c.\ and with $\limsup_n\|\varphi_n\|<\infty$ induces a bounded below c.p.c.\ map $\Psi\colon A\longrightarrow \prod F_n/\bigoplus F_n$, which is order zero when the downwards maps $(\psi_n)_n$ are approximately order zero. 
When $A$ is unital, $\Psi(1_A)$ is an         order unit  
 with scaling factor $R\leq \limsup_n \|\varphi_n\|$. If we assume $A\xlongrightarrow {\psi_n}F_n\xlongrightarrow {\varphi_n}A$ is a c.p.c.\ approximation, then we automatically get $R=1$. 
\end{example}

It is perhaps enlightening to see when a local order unit fails to be an        order unit. 

\begin{example}\label{not uou}
 (i) Fix $h\coloneqq((n+1)^{-2})_n\in C_0(\N)\subset C_b(\N)$. Define $\theta\colon C_b(\N)\longrightarrow C_b(\N)$ by $x\longmapsto hx$. Then this map is c.p.c.\ order zero and injective, and  hence $\theta(1)=h\in C_b(\N)_+^1$ is a local order unit for $\theta(C_b(\N))$. 
For each $n\in \N$, 
set $x_n\coloneqq(n+1)\delta_n\in C_b(\N)$. Then $\|\theta(x_n)\|=(n+1)^{-1}$ for each $n\in \N$, and for any $R>0$,
\begin{align*}
    \big(R\|\theta(x_n)\|h-\theta(x_n)\big)(n)
    =\textstyle{\frac{R-(n+1)^2}{(n+1)^3}}.
\end{align*} 
Hence there is no $R>0$ so that $R\|\theta(x_n)\|h-\theta(x_n)\geq 0$ for all $n\in \N$, and $h$ is not an        order unit for $\theta(C_b(\N))$. 

Note that $\theta$ does not have closed image (and hence is not bounded below). 
\medskip 

\noindent (ii) Similarly, consider the $\Cstar$-algebra $C([0,1])$, the element $h\coloneqq\id_{[0,1]}\in C([0,1])$, and the c.p.\ 
map $\theta \colon C([0,1])\longrightarrow C_0((0,1])$ given by $\theta(f)=(hf)|_{(0,1]}$ (which we write as just $hf$). Here again, $h=\theta(1)$ is a  
local order unit but not an        order unit. 
For each $j\geq 1$, define $h_j\in C([0,1])$ as in \eqref{hk}.
   Then for each $j\geq 1$, we have $\|\theta(h_j)\|=1$, and so there is no single value $R>0$ so that for every $j\geq 1$ and $t\in (0,1]$,
   $$R\|\theta(h_j)\|h(t)-\theta(h_j)(t)=t(R-h_j(t))\geq 0.$$
Again, notice that $\theta\big(C([0,1])\big)$ is not closed in $C_0((0,1])$.
\end{example}


\section{A $\Cstar$-structure}\label{Section:Def bullet}

\noindent In this section, we prove our main theorem: 

\begin{theorem}\label{theorem57}
Let $B$ be a $\Cstar$-algebra and $X\subset B$ a self-adjoint linear subspace. Suppose there exists a local order unit $e\in B_+^1\cap X'$ for $X$ so that $X^2\subset eX$. 
Then there is an associative bilinear map ${\sbt}\colon X\times X\longrightarrow X$ satisfying 
\begin{align}\label{mult id}
    xy=e(x{\sbt} y)
\end{align}
for all $x,y\in X$, 
so that $(X,{\sbt})$ is a $^*$-algebra. When $e\in X$, then $e$ is the unit for $(X,{\sbt})$. 

Moreover, the map $\|\cdot\|_{\sbt}\colon X\longrightarrow [0,\infty)$ defined for each $x\in X$ by
\begin{align}\label{bullet norm}
    \|x\|_{\sbt}=\lim_j\|h_j(e)x\| 
\end{align}
with $\{h_j\}_j\subset C_0((0,1])_+$ as given in \eqref{hk}, is a pre-$\Cstar$-norm on $(X,{\sbt})$.
\end{theorem}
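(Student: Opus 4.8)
The plan is to first build the $^*$-algebra structure by ``dividing by $e$'', relying throughout on the nondegeneracy property of commuting order units, and then to realize the norm $\|\cdot\|_{\sbt}$ as the pullback of an operator norm under a $^*$-homomorphism into a corner of $B^{**}$.

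\emph{The multiplication and its bilinearity.} Since $X^2\subset eX$, for each pair $x,y\in X$ there is some $z\in X$ with $xy=ez$, and this $z$ is \emph{unique}: if $ez=ez'$ then $e(z-z')=0$, so $z=z'$ by nondegeneracy (\cref{order unit}, applied with $n=1$). I would therefore \emph{define} $x\sbt y$ to be this $z$, which is exactly the content of \eqref{mult id}. Bilinearity is then immediate by cancellation: both $e\big((\lambda x+x')\sbt y\big)$ and $e\big(\lambda(x\sbt y)+(x'\sbt y)\big)$ equal $(\lambda x+x')y$, so the two coincide after cancelling $e$ via nondegeneracy, and symmetrically in the second variable. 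Note $\sbt$ genuinely lands in $X$.

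\emph{Associativity, involution, unit.} Each of these reduces to cancelling a power of $e$, using $e\in X'$. For associativity, multiply \eqref{mult id} by $e$ to get $e^2\big((x\sbt y)\sbt z\big)=e(x\sbt y)z=xyz=x\,e(y\sbt z)=e^2\big(x\sbt(y\sbt z)\big)$, and cancel $e^2$ by nondegeneracy (now with $n=2$). For the involution inherited from $B$: since $e=e^*$ commutes with $X$, applying $*$ to \eqref{mult id} gives $e(x\sbt y)^*=y^*x^*=e(y^*\sbt x^*)$, whence $(x\sbt y)^*=y^*\sbt x^*$. When $e\in X$, the identities $e(e\sbt x)=ex=e(x\sbt e)$ give $e\sbt x=x\sbt e=x$, so $e$ is the unit.

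\emph{The norm.} This is the crux. Let $D\coloneqq\overline{eBe}$, which contains $\Cstar(X)$ by \cref{commuting order unit}, and let $p\in B^{**}$ be the support projection of $e$, so that $D^{**}$ identifies with the corner $pB^{**}p$, a $\Cstar$-algebra in which $e$ has a (possibly unbounded) inverse $e^{-1}$. The key point is that $\Lambda(x)\coloneqq e^{-1}x$ is a \emph{bounded} element of $D^{**}$: \cref{commuting order unit} supplies $R>0$ with $Re\geq|x|$, so squaring yields $R^2e^2\geq x^*x+xx^*\geq xx^*$, and since $e^{-1}$ commutes with $xx^*$ we get $\|\Lambda(x)\|^2=\|e^{-2}xx^*\|\leq R^2<\infty$. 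I would then identify $\|x\|_{\sbt}$ with $\|\Lambda(x)\|$ by working in the commutative $\Cstar$-algebra $\Cstar(e,xx^*)\cong C_0(\Omega)$, in which $e\mapsto s$ and $xx^*\mapsto u$: the functions $h_j(s)^2u$ increase pointwise to $u/s^2$ while staying dominated by it, so
\begin{align*}
\|h_j(e)x\|^2=\|h_j(e)^2xx^*\|=\sup_\Omega h_j(s)^2u\ \longrightarrow\ \sup_\Omega \frac{u}{s^2}=\|e^{-2}xx^*\|=\|\Lambda(x)\|^2.
\end{align*}
The order-unit bound forces $u=0$ wherever $s=0$, so $u/s^2$ is bounded and the supremum is finite; this shows at once that the limit in \eqref{bullet norm} exists and equals $\|\Lambda(x)\|$.

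\emph{Conclusion and main obstacle.} Finally $\Lambda\colon(X,\sbt)\to D^{**}$ is an injective $^*$-homomorphism: it is $^*$-preserving since $\Lambda(x)^*=e^{-1}x^*=\Lambda(x^*)$, multiplicative since $\Lambda(x)\Lambda(y)=e^{-1}xe^{-1}y=e^{-2}xy=e^{-2}e(x\sbt y)=\Lambda(x\sbt y)$ (using $e\in X'$ and \eqref{mult id}), and injective since $\Lambda(x)=0$ gives $x=e\Lambda(x)=0$. Hence $\|\cdot\|_{\sbt}=\|\Lambda(\cdot)\|$ is the pullback of the $\Cstar$-norm on $D^{**}$ under an injective $^*$-homomorphism, so it is a genuine norm obeying submultiplicativity and the $\Cstar$-identity; that is, $(X,\sbt)$ is a pre-$\Cstar$-algebra. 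The delicate point throughout is the rigorous handling of the unbounded inverse $e^{-1}$ and the verification that $\Lambda(x)\in D^{**}$ is bounded with $\|\Lambda(x)\|=\lim_j\|h_j(e)x\|$; everything hinges on the local order unit hypothesis, which is precisely what delivers the estimate $R^2e^2\geq xx^*$ and hence finiteness of the limit, while the approximate inverses $h_j(e)$ from \eqref{hk} serve to approach $e^{-1}$ from within honest $\Cstar$-algebras.
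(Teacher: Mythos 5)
Your algebraic construction of $\sbt$ --- uniqueness of $z$ with $xy=ez$ via nondegeneracy, and the cancellation arguments for bilinearity, associativity, the involution, and the unit --- is exactly the paper's argument. For the norm you take a genuinely different route. The paper (via \cref{norm}) stays inside honest $\Cstar$-algebras: it shows by direct functional-calculus estimates in $\Cstar(e,|x|)\cong C_0(\Omega)$ that $\lim_j\|h_j(e)x\|=\sup_j\|h_j(e)x\|=\lim_j\|h_j(e)^2ex\|$, and then reads off submultiplicativity and the $\Cstar$-identity from the last formula by computing $\|h_j(e)^2e(x\sbt y)\|=\|h_j(e)^2xy\|\leq\|h_j(e)x\|\,\|h_j(e)y\|$ and $\|h_j(e)^2e(x^*\sbt x)\|=\|h_j(e)x\|^2$. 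You instead exhibit $\|\cdot\|_{\sbt}$ as the pullback of the $\Cstar$-norm on $D^{**}=pB^{**}p$ under an injective $^*$-homomorphism $\Lambda(x)=e^{-1}x$, which makes the pre-$\Cstar$-norm axioms automatic and explains conceptually why the paper's formula works (it is $\|e^{-2}\cdot ex\|$ in disguise, with the $h_j(e)$ approaching $e^{-1}$ from within $\Cstar(e)$). Your approach buys a one-line verification of the $\Cstar$-axioms and a concrete realization of $\Cstar_{\sbt}(X)$ inside $pB^{**}p$; it costs the bidual machinery, which the paper deliberately avoids.

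Two points need shoring up before the argument is complete. First, you manipulate the unbounded affiliated operator $e^{-1}$ and write $\Lambda(x)=e^{-1}x$ before establishing that this product is a well-defined bounded element of $D^{**}$; the clean fix is to \emph{define} $\Lambda(x)$ as the unique $d\in pB^{**}p$ with $ed=x$. Existence (with $\|d\|\leq R$) follows from $xx^*\leq R^2e^2$ by Douglas factorization, and uniqueness from the fact that $p=\text{s-}\lim_j h_j(e)e$ is the support projection of $e$ (the bidual analogue of \cref{order unit}(i)); one then needs the additional cancellation $e\Lambda(x)=\Lambda(x)e$, which your computations $\Lambda(x)\Lambda(x)^*=e^{-2}xx^*$ and $\Lambda(x)\Lambda(y)=\Lambda(x\sbt y)$ silently use. (Also, squaring $Re\geq|x|$ gives only $R^2e^2\geq x^*x$; apply the same to $x^*\in X$ to get $R^2e^2\geq xx^*$.) Second, the equality $\sup_\Omega u/s^2=\|e^{-2}xx^*\|$ is not a statement internal to $C_0(\Omega)$, since $e^{-2}xx^*$ lives only in the bidual; it follows from $h_j(e)^2e^2\nearrow p$ strongly together with normality of states, giving $\|w\|=\sup_j\|h_j(e)^2e^2w\|$ for positive $w\in pB^{**}p$ commuting with $e$, but this step should be justified rather than asserted. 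With these repairs your proof is correct.
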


We begin by establishing further descriptions of $\|\cdot\|_{\sbt}$. 
(Note that the following Proposition does not require the $X^2\subset eX$ assumption.)

\begin{proposition}\label{norm}
Let $B$ be a $\Cstar$-algebra and $X\subset B$ a self-adjoint linear subspace with local order unit $e\in B_+^1\cap X'$. 
Then the map $\|\cdot\|_{\sbt}\colon X\longrightarrow [0,\infty)$ defined in \eqref{bullet norm} is a norm on $X$. Moreover, for each $x\in X$ we have  
\begin{align}
\|x\|_{\sbt}&=\lim_j\|h_j(e)x\|\label{bullet norm 2}\\
&=\sup_j\|h_j(e)x\| \nonumber\\
&=\lim_j\|h_j(e)^2ex\|\nonumber \\
&=\sup_j\|h_j(e)^2ex\|. \nonumber
\end{align}
\end{proposition}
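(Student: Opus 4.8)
The plan is to first isolate the elementary functional-calculus facts about the $h_j$ that make all four quantities tractable, then dispatch finiteness and the norm axioms, and finally prove the two genuinely nontrivial equalities relating $\|h_j(e)x\|$ to $\|h_j(e)^2ex\|$. The governing observation is that both scalar functions $t\mapsto h_j(t)$ and $t\mapsto h_j(t)^2t$ are non-decreasing in $j$ for each fixed $t\in[0,1]$: from \eqref{hk} one reads off that on $[0,\tfrac1j]$ they equal $j^2t$ and $j^4t^3$ (both increasing in $j$), while on $[\tfrac1j,1]$ they are both constantly $1/t$. By functional calculus this makes $h_j(e)$ and $h_j(e)^2e$ increasing sequences of positive operators. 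Since $e\in X'$, each $h_j(e)$ commutes with $x$, so $\|h_j(e)x\|^2=\|h_j(e)^2\,x^*x\|$ and $\|h_j(e)^2ex\|^2=\|(h_j(e)^2e)^2x^*x\|$; as $x^*x\ge 0$ commutes with $e$, both right-hand sides are norms of increasing positive sequences, hence non-decreasing in $j$. This already delivers $\lim_j=\sup_j$ on the second and fourth lines.

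To see these suprema are finite, I would apply \cref{commuting order unit}(ii) to obtain $R>0$ with $Re\ge|x|$, and square these commuting positive elements to get $x^*x\le R^2e^2$. Then $h_j(e)^2x^*x\le R^2(h_j(e)e)^2\le R^2$, because $h_j(e)e$ is a positive contraction (as $h_j(t)t\le 1$). Hence $\|h_j(e)x\|\le R$ uniformly in $j$, so $\|\cdot\|_{\sbt}$ is well defined and finite. Homogeneity and the triangle inequality pass to the limit from the corresponding properties of $\|h_j(e)\,\cdot\,\|$, so only definiteness requires care. For this I would use \cref{commuting order unit}(iii): $x\in\Cstar(X)\subset\overline{eBe}$ and $\{h_j(e)e\}_j$ is an approximate identity there, so $h_j(e)ex\to x$; if $\|x\|_{\sbt}=0$ then $\|h_j(e)ex\|\le\|h_j(e)x\|\to 0$, forcing $x=0$.

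It remains to identify $\sup_j\|h_j(e)^2ex\|$ with $\|x\|_{\sbt}=\sup_j\|h_j(e)x\|$. One inequality is immediate: since $h_j(t)^2t\le h_j(t)$ pointwise, we have $h_j(e)^2e\le h_j(e)$, and as these are commuting positive functions of $e$ this yields $\|h_j(e)^2ex\|\le\|h_j(e)x\|$, whence $\sup_j\|h_j(e)^2ex\|\le\|x\|_{\sbt}$. The reverse inequality is the \textbf{main obstacle}. My approach is to fix $k$ and exploit the approximate-identity convergence $h_k(e)h_j(e)ex\to h_k(e)x$ as $j\to\infty$ (valid because $h_k(e)$ is bounded and $h_j(e)ex\to x$), combined with the order estimate $h_k(e)h_j(e)e\le h_j(e)^2e$ for $j\ge k$ (from $h_k(e)\le h_j(e)$). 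These give
\[
\|h_k(e)x\|=\lim_j\|h_k(e)h_j(e)ex\|\le\sup_j\|h_j(e)^2ex\|,
\]
and taking the supremum over $k$ produces $\|x\|_{\sbt}\le\sup_j\|h_j(e)^2ex\|$. Combining the two inequalities closes the chain of equalities, completing the proof.
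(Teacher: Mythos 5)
Your proof is correct, and the routine parts — monotonicity of both sequences via the pointwise monotonicity in $j$ of $h_j$ and of $t\mapsto h_j(t)^2t$, boundedness from $Re\ge|x|$, and the inequality $\sup_j\|h_j(e)^2ex\|\le\|x\|_{\sbt}$ from $h_j(t)^2t\le h_j(t)$ — coincide with the paper's. Where you genuinely diverge is in the two delicate steps. For definiteness, the paper deduces $e|x|=0$ from $\|x\|_{\sbt}=0$ and then invokes \cref{commuting order unit}(i) and (ii) to force $x^*x\le Re|x|=0$; you instead note that $h_j(e)ex\to x$ by \cref{commuting order unit}(iii) while $\|h_j(e)ex\|\le\|h_j(e)x\|=0$, which is shorter. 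More significantly, for the hard inequality $\sup_k\|h_k(e)x\|\le\sup_j\|h_j(e)^2ex\|$ the paper passes to $C_0(\Omega)$ with $\Omega\subset\sigma(e)\times\sigma(|x|)$, uses the bound $Rs\ge t$ from \cref{commuting order unit}(ii) together with a continuity argument to show the supremum defining $\|h_k(e)|x|\|$ is attained on $\{s\ge\varepsilon\}$, and then compares $h_k(s)t$ with $h_j(s)^2st$ there for $j$ large; you replace all of this with the sandwich $\|h_k(e)x\|=\lim_j\|h_k(e)h_j(e)ex\|$ and the operator inequality $h_k(e)h_j(e)e\le h_j(e)^2e$ for $j\ge k$, again riding on the approximate-identity statement of \cref{commuting order unit}(iii). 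Your route is cleaner and avoids the $\varepsilon$-bookkeeping, at the cost of concentrating the use of the local-order-unit hypothesis inside the cited lemma (whose proof is where $Re\ge|x|$ actually enters), whereas the paper's spectral argument is more self-contained and shows explicitly how the order-unit inequality controls the spectrum near $s=0$. If you write this up, spell out once that for commuting positive functions $a\le b$ of $e$ one gets $\|ax\|\le\|bx\|$ via $x^*a^2x\le x^*b^2x$, since you invoke this pattern several times.
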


\begin{proof}
Fix $x\in X$. First we show that the limit in \eqref{bullet norm} exists. Since $(h_j)_j$ is increasing, for all $j\geq 1$ we have (using that $x$ and $e$ commute)
$$\|h_j(e)x\|^2=\|x^*h_j(e)^2x\|\leq \|x^*h_{j+1}(e)^2x\|=\|h_{j+1}(e)x\|^2.$$
 So to show the limits exist, it suffices to show that the sequence is bounded. 
 
 Since $e$ is a local order unit for $X$,  
 we can choose for any self-adjoint $y\in X$ an $R>0$ such that $y\leq Re$. Since $e\in X'$, we have 
$\|h_j(e)y\|\leq \|Rh_j(e)e\|\leq R$ for all $j\geq 1$. Applying this to $(x+x^*)/2$ and $(x-x^*)/(2\mathrm{i})$, we conclude that $(\|h_j(e)x\|)_j$ is bounded and moreover \begin{align}\label{mon and bdd}
\|x\|_{\sbt}=\lim_j\|h_j(e)x\|=\sup_j\|h_j(e)x\|<\infty.
\end{align}

It is straightforward to check that $\|\cdot\|_{\sbt}$ is a seminorm. To see that it is a norm, suppose $x\in X$ and $0=\|x\|_{\sbt}=\sup_j\|h_j(e)x\|$.  Because $0\leq e\leq h_j(e)$ for each $j\geq 1$ and $e$ commutes with $x^*x$, 
we have 
$$\|e|x|\|^2=\|e^2|x|^2\|\leq \|ex^*x\|\leq \|h_j(e)x^*x\|\leq \|x^*\|\|h_j(e)x\|=0$$ for all $j\geq 1$. So $e|x|=0$, $e$ commutes with $|x|$, and, by \cref{commuting order unit}\ref{commuting order unit (b)}, there exists $R>0$ with $Re\geq |x|$. From this, \cref{commuting order unit}\ref{eq: hashtag} tells us $x^*x\leq Re|x|=0$, and hence $x=0$. 

All that remains is to show the additional claim: for each fixed $x\in X\backslash\{0\}$ we have
\begin{align}\label{extra}
    \|x\|_{\sbt}=\lim_j\|h_j(e)^2ex\|=\sup_j\|h_j(e)^2ex\|.
\end{align}
Since $0\leq h_j(e)^4e^2\leq h_{j+1}(e)^4e^2$ and $0\leq h_j(e)^4e^2\leq h_j(e)^2$ for all $j\geq 1$, we have 
\begin{align*}
    \|h_j(e)^2ex\|^2&=\|x^*h_j(e)^4e^2x\|\leq \|x^*h_{j+1}(e)^4e^2x\|=\|h_{j+1}(e)^2ex\|^2\ \text{ and}\\
     \|h_j(e)^2ex\|^2&=\|x^*h_j(e)^4e^2x\|\leq \|x^*h_j(e)^2x\|=\|h_j(e)x\|^2\overset{\eqref{mon and bdd}}{\leq}\|x\|_{\sbt}^2
\end{align*}
for all $ j\geq 1$. This implies the sequence $\big(\|h_j(e)^2ex\|\big)_j$ is bounded and nondecreasing, and so the limit in \eqref{extra} exists and furthermore
\begin{align*}
    \lim_j\|h_j(e)^2ex\|=\sup_j\|h_j(e)^2ex\|\leq \sup_j\|h_j(e)x\|
    =\|x\|_{\sbt}.
\end{align*}
It remains to show that this is an equality. Since $e\in B_+\cap X'$, we have $\|h_j(e)|x|\|=\|h_j(e)x\|$ 
and likewise $\|h_j(e)^2e|x|\| =\|h_j(e)^2ex\|$ for all $j\geq 1$, and so we instead show that 
\[\sup_j \|h_j(e)|x|\|\leq \sup_j\|h_j(e)^2e|x|\|.\]
We fix $k\geq 1$ 
and aim to find $j\geq 1$ so that $\|h_j(e)^2e|x|\|\geq \|h_k(e)|x|\|$. 
Since $e$ commutes with $|x|$, we identify $\Cstar(e,|x|)=C_0(\Omega)$ as before for some locally compact Hausdorff space $\Omega\subset\sigma(e)\times\sigma(|x|)$. 
First, we claim that there exists $\varepsilon>0$ such that 
\begin{align}\label{eq: cts}
    \|h_k(e)|x|\|=\sup_{(s,t)\in \Omega, s\geq\varepsilon} |h_k(s)t|.
\end{align}
 Since $(s,t)\longmapsto h_k(s)t$ is a continuous function on $[0,1]\times[0,\infty)$, there exists a $\delta>0$ such that $|h_k(s)t|<\|h_k(e)|x|\|/2$ whenever $|(s,t)|<\delta$ (since $x\neq 0$, we can conclude from \cref{order unit}(ii) along with \cref{commuting order unit}(i) that $\|h_k(e)|x|\|>0$). 
 By \cref{commuting order unit}\ref{commuting order unit (b)}, there exists $R>0$ so that 
 $Rs\geq t$ for all $(s,t)\in \Omega$. For $s<\varepsilon\coloneqq\delta(R^2+1)^{-1/2}$, we have 
 $$|(s,t)|^2=s^2+t^2\leq s^2+R^2s^2<\delta^2,$$
 which establishes the claim.
 
 Now choose $j\geq k$ such that $1/j<\varepsilon$, and consider $(s,t)\in \Omega$ with $s\geq \varepsilon$. Then $s>1/j$ and so $h_j(s)^2st=\textstyle{\frac{t}{s}}$. 
 If $s> 1/k$, then $$h_k(s)t=\textstyle{\frac{t}{s}}=h_j(s)^2st,$$ and if $s\leq 1/k$, then $$h_k(s)t=k^2st\leq \textstyle{\frac{t}{s}}=h_j(s)^2st.$$ Hence for all $(s,t)\in \Omega$ with $s\geq \varepsilon$, we have 
 $$h_j(s)^2st=\textstyle{\frac{t}{s}}\geq h_k(s)t,$$ which gives us
 \begin{align*}
     \|h_j(e)^2e|x|\|\geq \sup_{(s,t)\in \Omega, s\geq\varepsilon}|h_j(s)^2st|\geq \sup_{(s,t)\in \Omega, s\geq\varepsilon} |h_k(s)t|\overset{\eqref{eq: cts}}{=}\|h_k(e)|x|\|.
 \end{align*}
 This establishes \eqref{extra}.
\end{proof}

We are now ready to prove \cref{theorem57}. Since we are establishing an alternative product on a subspace of a $\Cstar$-algebra, we emphasize here that multiplication in $B$ will always be denoted by the usual concatenation (i.e., $xy$). For example for $e (x\sbt y)$, denotes the $B$-product of $e$ and $x\sbt y$. 
\begin{proof}[Proof of \cref{theorem57}]

The fact that $X^2\subset eX$ along with the nondegeneracy of $e$ (i.e., \cref{order unit}(i)) 
guarantee that for each $(x,y)\in X\times X$, there exists a unique element $z$ in $X$ such that $xy=ez$. We denote this element by $x{\sbt} y$,
and conclude that the map ${\sbt}\colon X\times X\longrightarrow X$ given by $(x,y)\longmapsto x{\sbt} y$ is well-defined. 

The arguments for associativity and bilinearity use nondegeneracy of $e$ in the same way, so we just give one argument for associativity. Let $x,y,z\in X$ and $\lambda\in \mathbb{C}$. Then we have
\begin{align*}
    e^2(x{\sbt}(y{\sbt} z))&=ex(y{\sbt} z)\\
    &=xe(y{\sbt} z)\\
    &=x(yz)\\
    &=(xy)z\\
    &=e(x{\sbt} y)z\\
    &=e^2((x{\sbt} y){\sbt} z).
\end{align*}
By nondegeneracy of $e$ (or actually $e^2$ in this case), it follows that $x{\sbt} (y{\sbt} z)=(x{\sbt} y){\sbt} z$. 

To see that $(X,{\sbt})$ is a $^*$-algebra, we compute for $x,y\in X$
$$e(x{\sbt} y)^*=(e(x{\sbt} y))^*=(xy)^*=y^*x^*=e(y^*{\sbt} x^*).$$ Then it follows from nondegeneracy that $(x{\sbt} y)^*=y^*{\sbt} x^*$. 

When $e\in X$, nondegeneracy also gives
$$0=ex-ex=e(e{\sbt} x) - ex = e(e{\sbt} x-x)\Longrightarrow e{\sbt} x=x,$$ and similarly $x{\sbt} e=x$. 
Hence $e$ becomes the unit for $(X,{\sbt})$. 

\medskip

From \cref{norm}, we know that $\|\cdot\|_{\sbt}\colon X\longrightarrow [0,\infty)$ defines a norm on $X$. 
It remains to show that it  
is a pre-$\Cstar$-norm on $(X,{\sbt})$. 
We begin by checking that $\|\cdot\|_{\sbt}$ is a algebra norm with respect to ${\sbt}\colon X\times X\longrightarrow X$ (i.e., it is submultiplicative). Let $x,y\in X$. With \eqref{bullet norm 2} we have that
\begin{align*}
    \|x{\sbt} y\|_{\sbt}&=\lim_j\|h_j(e)^2e(x{\sbt} y)\|\\
    &=\lim_j\|h_j(e)^2xy\|
    \\
    &\leq (\lim_j \|h_j(e)x\|)(\lim_j \|h_j(e)y\|)\\
    &=\|x\|_{\sbt}\|y\|_{\sbt}. 
\end{align*}
Likewise we check the $\Cstar$-identity for each $x\in X$ again using \eqref{bullet norm 2}
\begin{align*}
    \|x^*{\sbt} x\|_{\sbt}&=\lim_j\|h_j(e)^2e(x^*{\sbt} x)\|\\
    &=\lim_j\|h_j(e)^2x^*x\|\\
    &=\lim_j\|h_j(e)x\|^2\\
    &=\|x\|_{\sbt}^2.
\end{align*}
Hence $\|\cdot\|_{\sbt}$ is a pre-$\Cstar$-norm on $(X,\sbt)$.
\end{proof}

\begin{definition}\label{defn60}
Let $B$ be a $\Cstar$-algebra and $X\subset B$ a self-adjoint linear subspace with local order unit $e\in B_+^1\cap X'$ with $X^2\subset eX$. Let ${\sbt}\colon X\times X\longrightarrow X$ be the product and $\|\cdot\|_{\sbt}\colon X\longrightarrow [0,\infty)$ the norm from \cref{theorem57}. 
We denote the induced $\Cstar$-algebra $\overline{(X,{\sbt})}^{\|\cdot\|_{_{\sbt}}}$ by $\Cstar_{\sbt}(X)$, and we call this the \emph{$\Cstar$-algebra associated to $(X,e)$}. 
\end{definition}

Note that when $X$ is already complete with respect to $\|\cdot\|_{\sbt}$, $\Cstar_{\sbt}(X)$ and $X$ are equal as $^*$-vector spaces. This is often the case in our applications. 

\begin{examples}\label{prop52.5} 
(i) Let $B$ be a $\Cstar$-algebra and $e\in B_+$ with $\|e\|\leq 1$. Then  $X=\Span\{e\}$ is readily seen to satisfy the criteria of \cref{theorem57}, and in this case $\Cstar_{\sbt}(X)\cong\C$.

(ii) Let $A$ and $B$ be $\Cstar$-algebras and $\theta\colon A\longrightarrow B$ a c.p.c.\ map. Then $X=\theta(A)$ is a self-adjoint linear subspace of $B$, and we can define an associative bilinear map ${\sbt}\colon X\times X\longrightarrow X$ given by $\theta(a){\sbt}\theta(b)=\theta(ab)$. When $A$ is unital and $\theta$ is moreover order zero, $e\coloneqq\theta(1_A)\in B_+^1\cap X\cap X'$ is a local order unit for $X$ and $eX=X^2$ by the structure theorem for order zero maps. In particular, we have $e(\theta(a){\sbt}\theta(b))=\theta(1_A)\theta(ab)=\theta(a)\theta(b)$ for all $\theta(a),\theta(b)\in X$. Hence $(\theta(A),\theta(1_A))$ also satisfy the criteria of \cref{theorem57}.
\end{examples}

\begin{proposition}\label{prop52'}
Let $B$ be a $\Cstar$-algebra and $X\subset B$ a self-adjoint linear subspace with local order unit $e\in B_+^1\cap X'$ with $X^2\subset eX$. 
Then we have the following for all $x,y\in X$ with respect to the product $\sbt$ from \cref{theorem57}. 
\begin{enumerate}[label=\textnormal{(\roman*)}]
    \item $xy=0$ if and only if $x{\sbt} y=0$, \label{prop52' it2}
    \item $xy=yx$ if and only if $x{\sbt} y = y{\sbt} x$, and \label{prop52' it3}
    \item $x^*{\sbt} x\in B_+$. \label{prop52' it4}
\end{enumerate}
\end{proposition}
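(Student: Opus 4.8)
The plan is to leverage the defining relation \eqref{mult id}, namely $xy = e(x{\sbt} y)$, to translate each of the three assertions about the ${\sbt}$-product into a statement about the ordinary product of $e$ with an element of $X$ inside $B$, and then to invoke the structural properties of the commuting local order unit $e$ recorded in \cref{order unit}.

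For (i) and (ii) the key tool is nondegeneracy, \cref{order unit}(i). For (i), I would note that \eqref{mult id} makes $xy = 0$ equivalent to $e(x{\sbt} y) = 0$; since $x{\sbt} y \in X \subset \Span\{X,e\}$, nondegeneracy forces $x{\sbt} y = 0$, while the converse is immediate from $xy = e\cdot 0 = 0$. For (ii), applying \eqref{mult id} to both $xy$ and $yx$ shows that $xy = yx$ is equivalent to $e\big((x{\sbt} y) - (y{\sbt} x)\big) = 0$; as $(x{\sbt} y) - (y{\sbt} x) \in X$, nondegeneracy again yields the desired equivalence with $x{\sbt} y = y{\sbt} x$.

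For (iii) the relevant tool is the order property \cref{order unit}(iii): if $ez \geq 0$ in $B$ for some $z \in \Span\{X,e\}$, then $z \geq 0$ in $B$. I would set $z \coloneqq x^*{\sbt} x$, which lies in $X$ because $X$ is self-adjoint (so $x^* \in X$) and ${\sbt}$ maps $X \times X$ into $X$. By \eqref{mult id} we have $ez = e(x^*{\sbt} x) = x^* x$, which is manifestly positive in $B$. Applying \cref{order unit}(iii) then gives $x^*{\sbt} x = z \geq 0$ in $B$, as claimed.

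I do not expect a genuine obstacle here: once the defining relation is in hand, each part is a one-line deduction. The only point requiring care is to confirm at each step that the element fed into nondegeneracy or into the order property actually lies in $X$ (respectively $\Span\{X,e\}$), which is guaranteed by ${\sbt}\colon X \times X \longrightarrow X$ together with the self-adjointness of $X$.
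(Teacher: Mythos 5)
Your proposal is correct and follows essentially the same route as the paper: nondegeneracy (\cref{order unit}(i)) for parts (i) and (ii), and the order property (\cref{order unit}(iii), which the paper cites with a typographical slip as ``(iv)'') applied to $e(x^*\sbt x)=x^*x\geq 0$ for part (iii).
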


\begin{proof}
Set $x,y\in X$. By nondegeneracy (\cref{order unit}(i)) we have that 
$$0=xy=e(x{\sbt} y) \iff x{\sbt} y=0.$$
If $xy = yx$, then 
$$e(x{\sbt} y)=xy=yx=e(y{\sbt} x)\ \Longrightarrow\ x {\sbt} y = y {\sbt} x,$$
and likewise $x {\sbt} y = y {\sbt} x$ implies $xy=yx$. 
Finally, $e(x^*{\sbt} x)=x^*x\geq 0$ implies $x^*{\sbt} x\geq 0$ by \cref{order unit}(iv). 
\end{proof}

\begin{remark}\label{X has positive elements}
In particular, this tells us that, even when $e\notin X$, we are still guaranteed to have $X\cap B_+\neq \{0\}$ when $X\neq \{0\}$, which resolves the concern raised in Remark \ref{automatically mou}(ii). 
\end{remark}

We close by confirming that our construction is compatible with matrix amplifications.

\begin{corollary}\label{theorem 57 amplified}
Let $B$ be a $\Cstar$-algebra and $X\subset B$ a self-adjoint linear subspace with local order unit $e\in B_+^1\cap X'$ with $X^2\subset eX$.
Then for every $r\geq 1$, $e^{(r)}\in \M_r(B)_+^1\cap \M_r(X)'$ is a local order unit for $\M_r(X)$ with $\M_r(X)^2\subset e^{(r)}\M_r(X)$, where $e^{(r)}\coloneqq 1_{\M_r}\otimes e$. The $\Cstar$-algebra associated to $(\M_r(X),e^{(r)})$ is $\M_r\big(\Cstar_{\sbt}(X)\big)$. 
\end{corollary}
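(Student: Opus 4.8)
The plan is to verify the hypotheses of \cref{theorem57} for the pair $(\M_r(X), e^{(r)})$ and then identify the resulting $\Cstar$-algebra. First I would check that $e^{(r)} = 1_{\M_r}\otimes e$ is a commuting local order unit for $\M_r(X)$. This is essentially already recorded: as noted in \cref{automatically mou}(i) and reused at the end of the proof of \cref{order unit}, a local order unit $e$ for $X\subset B$ automatically amplifies to a local order unit $e^{(r)}\in \M_r(B)_+^1\cap \M_r(X)'$ for $\M_r(X)$. The norm bound $\|e^{(r)}\|=\|e\|\leq 1$ and the commutation $e^{(r)}\in \M_r(X)'$ are immediate from the tensor-product form.

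Next I would verify the key structural hypothesis $\M_r(X)^2\subset e^{(r)}\M_r(X)$. The point is that matrix multiplication is built entrywise out of the $B$-multiplication: for $a=(a_{ij}), b=(b_{ij})\in \M_r(X)$, the $(i,k)$-entry of $ab$ is $\sum_j a_{ij}b_{jk}$, and since each $a_{ij}b_{jk}\in X^2\subset eX$, writing $a_{ij}b_{jk}=e(a_{ij}\sbt b_{jk})$ gives $\sum_j a_{ij}b_{jk}=e\big(\sum_j a_{ij}\sbt b_{jk}\big)$. Collecting entries, $ab=e^{(r)}(a\sbt^{(r)} b)$, where $\sbt^{(r)}$ denotes the entrywise-matrix-product built from $\sbt$, i.e. $(a\sbt^{(r)}b)_{ik}=\sum_j a_{ij}\sbt b_{jk}$. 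This shows $\M_r(X)^2\subset e^{(r)}\M_r(X)$ and, by the uniqueness in \cref{theorem57} (nondegeneracy of $e^{(r)}$, from \cref{order unit}(i) applied at level $r$), simultaneously identifies the product $\sbt$ produced by \cref{theorem57} for $(\M_r(X),e^{(r)})$ with $\sbt^{(r)}$.

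With the product identified, the remaining task is to match the $\Cstar$-algebra. The algebraic $^*$-isomorphism $(\M_r(X),\sbt^{(r)})\cong \M_r(X,\sbt)$ is formal once one knows the product and involution on $\M_r(X)$ are the entrywise matrix operations induced by $(X,\sbt)$; the involution check reduces to $(x\sbt y)^*=y^*\sbt x^*$ from \cref{theorem57}. So it suffices to show the pre-$\Cstar$-norm $\|\cdot\|_{\sbt}$ on $(\M_r(X),e^{(r)})$ coincides with the canonical $\Cstar$-norm on $\M_r\big(\Cstar_{\sbt}(X)\big)$ arising from $(X,\sbt)$. For this I would use the description \eqref{bullet norm} from \cref{norm}: for $a\in \M_r(X)$,
\begin{align*}
\|a\|_{\sbt}=\lim_j\|h_j(e^{(r)})\,a\|=\lim_j\big\|\,(1_{\M_r}\otimes h_j(e))\,a\,\big\|,
\end{align*}
computed in $\M_r(B)$. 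The matrices $h_j(e^{(r)})a=(h_j(e)a_{ik})_{ik}$ are, for each fixed $j$, genuine matrices over $B$, and by uniqueness of the $\Cstar$-norm on a matrix algebra over a $\Cstar$-algebra, the limit agrees with the $\Cstar$-norm that $\M_r(\Cstar_{\sbt}(X))$ inherits from $\Cstar_{\sbt}(X)$.

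The main obstacle I anticipate is this last norm-identification step: a priori the norm on $\M_r(X)$ coming from $\|\cdot\|_{\sbt}$ at level $r$ is defined through the ambient $\M_r(B)$-norm, while the target norm on $\M_r(\Cstar_{\sbt}(X))$ is the unique $\Cstar$-norm determined abstractly by $\|\cdot\|_{\sbt}$ on $X$. Reconciling these requires showing the completion $\Cstar_{\sbt}(\M_r(X))$ and $\M_r(\Cstar_{\sbt}(X))$ carry the same $\Cstar$-norm; since both are $\Cstar$-norms on the same $^*$-algebra $\M_r(X,\sbt)$ (matrices over a pre-$\Cstar$-algebra), and a $\Cstar$-norm on a matrix algebra over a fixed $\Cstar$-algebra is unique, the two must agree, so the completions are canonically $^*$-isomorphic. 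The cleanest route is therefore to pass to completions at the very end and invoke uniqueness of $\Cstar$-norms rather than to match the norms elementwise by hand.
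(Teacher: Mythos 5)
Your first two steps coincide with the paper's proof: the amplification of the local order unit is quoted from \cref{automatically mou}(i), the inclusion $\M_r(X)^2\subset e^{(r)}\M_r(X)$ follows from the entrywise computation, and the product produced by \cref{theorem57} for $(\M_r(X),e^{(r)})$ is identified with the entrywise matrix product over $(X,\sbt)$ via nondegeneracy (\cref{order unit}(i)) at level $r$. Up to that point your argument is the paper's.

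The final norm-identification step, however, has a genuine gap. You invoke ``uniqueness of the $\Cstar$-norm on a matrix algebra over a fixed $\Cstar$-algebra,'' but the two norms you are comparing live on $\M_r(X,\sbt)$, and $(X,\sbt)$ is only a \emph{pre}-$\Cstar$-algebra: $X$ need not be complete in $\|\cdot\|_{\sbt}$. Uniqueness of $\Cstar$-norms is a statement about $\Cstar$-algebras (complete objects); a dense $^*$-subalgebra of a $\Cstar$-algebra can carry several inequivalent $\Cstar$-norms with non-isomorphic completions (the group algebra $\C[F_2]$ with its reduced and full norms is the standard example). So the assertion that the two norms on $\M_r(X,\sbt)$ ``must agree'' does not follow from what you cite, and the claim that the two completions are canonically $^*$-isomorphic is exactly what still has to be proved. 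What is missing is a quantitative comparison showing that $\|\cdot\|_{\star}$ on $\M_r(X)$ and the entrywise $\|\cdot\|_{\sbt}$-norms have the same Cauchy sequences. The paper supplies precisely this: from the elementary matrix estimate
\begin{align*}
\max_{1\leq k,l\leq r}\|h_j(e)x_{kl}\|\ \leq\ \|(h_j(e)x_{kl})_{k,l}\|\ \leq\ r\max_{1\leq k,l\leq r}\|h_j(e)x_{kl}\|
\end{align*}
one passes to the limit in $j$ to get $\max_{k,l}\|x_{kl}\|_{\sbt}\leq\|x\|_{\star}\leq r\max_{k,l}\|x_{kl}\|_{\sbt}$, which shows that the $\|\cdot\|_{\star}$-completion of $\M_r(X)$ is $\M_r\big(\overline{X}^{\|\cdot\|_{\sbt}}\big)=\M_r\big(\Cstar_{\sbt}(X)\big)$ as a $^*$-algebra; only \emph{then} does uniqueness of the $\Cstar$-norm on the complete algebra $\M_r\big(\Cstar_{\sbt}(X)\big)$ force the norms to agree. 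Adding this two-line estimate closes your gap and recovers the paper's proof.
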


\begin{proof}
Fix $r\geq 1$.
By Remark \ref{automatically mou}(i) $e$ is automatically a matrix local order unit. Moreover, $e^{(r)}\in \M_r(X)'$, and we have the inclusions 
\[\M_r(X)^2\subset \Span\{\M_r(X^2)\}\subset \M_r(eX)= e^{(r)}\M_r(X).\]
Applying \cref{theorem57} gives us an associative bilinear map $\star\colon \M_r(X)\times \M_r(X)\longrightarrow \M_r(X)$ satisfying
\begin{align*}
    xy=e^{(r)}(x{\star} y)
\end{align*}
for all $x,y\in \M_r(X)$, and a pre-$\Cstar$-norm $\|\cdot\|_{\star}$ on $(\M_r(X),{\star})$ defined for each $x\in \M_r(X)$ by 
 \begin{align*}
 \|x\|_{\star}\coloneqq\lim_j\|(1_{\M_r}\otimes h_j(e))x\|=\lim_j\|h_j(1_{\M_r}\otimes e)x\|=\lim_j\|h_j(e^{(r)})x\|,
 \end{align*}
 where $\{h_j\}_j$ is defined as in \eqref{hk}.
 
For $x=(x_{kl})_{k,l=1}^r$ and $y=(y_{kl})_{k,l=1}^r\in \M_r(X)$,
\begin{align*}
    e^{(r)} (x{\star} y)&=xy\\
    &=\big(\textstyle{\sum}_{m=1}^r x_{km}y_{ml}\big)_{k,l=1}^r\\
    &=\big(\textstyle{\sum}_{m=1}^r e (x_{km}{\sbt} y_{ml})\big)_{k,l=1}^r\\
    &=e^{(r)}\big(\textstyle{\sum}_{m=1}^r(x_{km}{\sbt} y_{ml})\big)_{k,l=1}^r,
\end{align*}
and so nondegeneracy (\cref{order unit}(i)) tells us 
that 
$$x{\star} y=\big(\textstyle{\sum}_{m=1}^n (x_{km}{\sbt} y_{ml})\big)_{k.l=1}^r,$$
whence we may write $\sbt$ for both products.

Next, since for each $j\geq 1$ and $x=(x_{kl})_{k,l=1}^r\in \M_r(X)$
\[\max_{1\leq k,l\leq r} \|h_j(e)x_{kl}\|\leq \|(h_j(e)x_{kl})_{k,l=1}^r\|\leq r\max_{1\leq k,l\leq r} \|h_j(e)x_{kl}\|, \]
it follows that 
\begin{align*}
\max_{1\leq k,l\leq r} \|x_{kl}\|_{\sbt}\leq \|x\|_{\star}\leq r\max_{1\leq k,l\leq r} \|x_{kl}\|_{\sbt}.
\end{align*}
Hence we have
\[\overline{\M_r(X)}^{\|\cdot\|_{\sbt}}=\M_r(\Cstar_{\sbt}(X)).\qedhere\]
\end{proof}

With this, we may write $\Cstar_{\sbt}\big(\M_r(X)\big)=\M_r\big(\Cstar_{\sbt}(X)\big)$ for all $r\geq 1$ without ambiguity. 

\section{A c.p.c.\ order zero map}\label{sect: a c.p.c. order zero map}

\noindent We now turn our attention to the map 
$\id_X:(X,\|\cdot\|_{\sbt})\longrightarrow (X,\|\cdot\|)$ between the normed linear spaces $(X,\|\cdot\|_{\sbt})$ and $(X,\|\cdot\|)$ (where $\|\cdot\|_{\sbt}$ is as defined in \eqref{bullet norm} and $\|\cdot\|$ is the norm inherited from the $\Cstar$-algebra $B$ as before), which we want to extend to a c.p.c.\ order zero map $\Cstar_{\sbt}(X)\longrightarrow B$. The map $\id_X$ is already a linear $^*$-preserving bijection, but in order to extend it to $\Cstar_{\sbt}(X)$, we need to show that it is bounded. In fact, it is completely contractive as the following proposition shows.

\begin{proposition}\label{equivalent norms}
Let $B$ be a $\Cstar$-algebra and $X\subset B$ a self-adjoint linear subspace with local order unit $e\in B_+^1\cap X'$, and let $\|\cdot\|_{\sbt}$ be as defined in \eqref{bullet norm}.
Then for each $r\geq 1$ and $x\in \M_r(X)$, $$\|x\|\leq \|x\|_{\sbt}.$$ 

Moreover, if we assume that $X$ is closed in $\|\cdot\|$, then there exists an $R\geq 1$ such that for all $x\in X$, 
$$\|x\|_{\sbt}\leq R\|x\|.$$ In particular, if $X=\overline{X}^{\|\cdot\|}$, then $X=\overline{X}^{\|\cdot\|_{\sbt}}$. 
\end{proposition}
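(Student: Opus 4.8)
The plan is to prove the two inequalities separately and then read off the ``in particular'' clause from the resulting equivalence of norms.

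First I would establish $\|x\|\le\|x\|_{\sbt}$ for $x\in\M_r(X)$ using the approximate identity furnished by \cref{commuting order unit}\ref{approx id}. Applying that result to $\M_r(X)$ with its commuting local order unit $e^{(r)}=1_{\M_r}\otimes e$ (which is a local order unit by \cref{automatically mou}(i)), one gets $\Cstar(\M_r(X))\subset\overline{e^{(r)}\M_r(B)e^{(r)}}$, for which $\{h_j(e^{(r)})e^{(r)}\}_j$ is an approximate identity. Hence for $x\in\M_r(X)$ we have $\|x\|=\lim_j\|h_j(e^{(r)})e^{(r)}x\|$. Since $\|e^{(r)}\|\le 1$ and $e^{(r)}$ is a function of itself (so commutes with $h_j(e^{(r)})$), each term obeys $\|h_j(e^{(r)})e^{(r)}x\|=\|e^{(r)}h_j(e^{(r)})x\|\le\|h_j(e^{(r)})x\|$; passing to the limit and invoking the description $\|x\|_{\sbt}=\sup_j\|h_j(e^{(r)})x\|$ from \cref{norm} (applied to $\M_r(X)$) gives $\|x\|\le\|x\|_{\sbt}$. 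This inequality needs no closedness hypothesis.

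For the reverse inequality I would invoke the uniform boundedness principle. Assuming $X$ is closed in $B$, the space $(X,\|\cdot\|)$ is a Banach space. For each $j\ge 1$ define the linear map $T_j\colon(X,\|\cdot\|)\longrightarrow B$ by $T_j(x)=h_j(e)x$; each $T_j$ is bounded with $\|T_j\|\le\|h_j(e)\|\le\|h_j\|_\infty=j$. By \cref{norm}, this family is pointwise bounded, since $\sup_j\|T_jx\|=\sup_j\|h_j(e)x\|=\|x\|_{\sbt}<\infty$ for every $x\in X$. Banach--Steinhaus then yields a finite constant $R\coloneqq\sup_j\|T_j\|<\infty$, so that $\|x\|_{\sbt}=\sup_j\|T_jx\|\le R\|x\|$ for all $x\in X$; the first inequality forces $R\ge 1$. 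Finally, the two bounds say $\|\cdot\|$ and $\|\cdot\|_{\sbt}$ are equivalent norms on $X$ when $X=\overline{X}^{\|\cdot\|}$. Equivalent norms share the same Cauchy sequences, so completeness of $(X,\|\cdot\|)$ transfers to $(X,\|\cdot\|_{\sbt})$; as $\Cstar_{\sbt}(X)$ is by definition the completion of $(X,\|\cdot\|_{\sbt})$, this gives $X=\overline{X}^{\|\cdot\|_{\sbt}}$.

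I do not expect a genuine obstacle so much as one conceptual point to get right: closedness of $X$ is used \emph{only} to supply the completeness needed for Banach--Steinhaus, and it is indispensable, since without it the local order unit need not be uniform and the reverse inequality genuinely fails (compare \cref{not uou}). The remaining care is routine bookkeeping with the amplifications $e^{(r)}$ and with the suprema characterization of $\|\cdot\|_{\sbt}$ from \cref{norm}.
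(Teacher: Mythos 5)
Your proposal is correct and follows essentially the same route as the paper: the first inequality via the approximate identity $\{h_j(e^{(r)})e^{(r)}\}_j$ from \cref{commuting order unit}\ref{approx id} together with $\|e^{(r)}\|\leq 1$, and the reverse inequality via Banach--Steinhaus applied to the very same family of multiplication operators $x\mapsto h_j(e)x$ (the paper's $H_j$). The concluding remark on equivalence of norms and the role of closedness also matches the paper's argument.
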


\begin{proof}
Let $\{h_j\}_j$ be as defined \eqref{hk}, and fix $r\geq 1$ and $x\in \M_r(X)$. 
Since $e^{(r)}$ is still a local order unit for $\M_r(X)$, $\{h_j(e^{(r)})e^{(r)}\}_j\subset X'$ forms an increasing approximate identity for $X$ by \cref{commuting order unit}(iii). Combining this with \cref{theorem 57 amplified} and \eqref{bullet norm 2}, we have 
\begin{align}\label{contractive}
    \|x\|&=\lim_j\|h_j(e^{(r)})e^{(r)}x\|\\
    &=\sup_j\|h_j(e^{(r)})e^{(r)}x\|\nonumber\\
    &\leq \sup_j\|e^{(r)}\|\|h_j(e^{(r)})x\| \nonumber\\
    &\leq\|e^{(r)}\|\|x\|_{\sbt}\nonumber\\
    &\leq \|x\|_{\sbt}. \nonumber
\end{align}

If $X$ is closed in $\|\cdot\|$, then $X$ is a Banach space with respect to $\|\cdot\|$, and the maps $H_j\colon X\longrightarrow B$ given for each $j\geq 1$ for all $x\in X$ by 
\begin{align}\label{Hj}
    H_j(x)=h_j(e)x
\end{align} 
are bounded linear maps.  Since $\sup_j\|H_j(x)\|=\|x\|_{\sbt}<\infty$ for each $x\in X$ (by \cref{norm}), the Banach--Steinhaus theorem (or Uniform Boundedness Principle)
guarantees that $0\leq \sup_j\|H_j\|=:R<\infty$. 
Thus we have for all $x\in X$
\begin{align}\label{Hj'}
\|x\|_{\sbt}&=\lim_j\|h_j(e)x\|\\
&=\sup_j\|h_j(e)x\|\nonumber\\
&=\sup_j\|H_j(x)\|\nonumber\\
&\leq \sup_j\|H_j\|\|x\|\nonumber\\
&= R\|x\|. \nonumber
\end{align}
Moreover, \eqref{contractive} tells us $R\geq 1$. 
\end{proof}

\begin{theorem}\label{prop 61}
Let $B$ be a $\Cstar$-algebra and $X\subset B$ a self-adjoint linear subspace with local order unit $e\in B_+^1 \cap X'$ with $X^2\subset eX$. Let $\Cstar_{\sbt}(X)$ be the associated $\Cstar$-algebra as in Definition \ref{defn60}. 
Then the map 
$\id_X:(X,\|\cdot\|_{\sbt})\longrightarrow (X,\|\cdot\|)$ extends to a c.p.c.\ order zero map
$\Phi\colon \Cstar_{\sbt}(X) \longrightarrow B$ with $\Phi(\Cstar_{\sbt}(X))\subset \overline{X}^{\|\cdot\|}$.  

If $\Phi$ is injective, then $\Phi^{(r)}(x)\geq 0$ if and only if $x\geq 0$ for any $r\geq 1$ and $x\in \M_r(\Cstar_{\sbt}(X))$. 
 Moreover $X$ is closed in $\|\cdot\|$ if and only if $\Phi=\id_X$ and $\Phi$ is bounded below.
\end{theorem}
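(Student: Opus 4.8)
The plan is to build $\Phi$ by continuous extension and then verify each property by reducing to the dense subspace $X$. First, \cref{equivalent norms} gives $\|x\|\le\|x\|_{\sbt}$ for every $x\in\M_r(X)$ and every $r\ge1$, so $\id_X$ is completely contractive as a map $(X,\|\cdot\|_{\sbt})\to(X,\|\cdot\|)$. Since $X$ is $\|\cdot\|_{\sbt}$-dense in $\Cstar_{\sbt}(X)$ by construction, $\id_X$ extends uniquely to a completely contractive linear map $\Phi\colon\Cstar_{\sbt}(X)\to B$; continuity together with $\Phi|_X=\id_X$ forces $\Phi(\Cstar_{\sbt}(X))\subset\overline{X}^{\|\cdot\|}$.

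The crux is showing $\Phi$ is c.p.\ order zero. For order zero I would first establish the identity
\[
\Phi(a)\Phi(b)=e\,\Phi(a\sbt b)\qquad(a,b\in\Cstar_{\sbt}(X)).
\]
On the dense set this is exactly the defining relation $xy=e(x\sbt y)$ of \cref{theorem57} (recall $\Phi$ is the identity on $X$ and $x\sbt y\in X$), and both sides are jointly $\|\cdot\|_{\sbt}$-continuous — using that $\Phi$ and the $B$-multiplication are continuous and that $\sbt$ is $\|\cdot\|_{\sbt}$-continuous — so the identity passes to all of $\Cstar_{\sbt}(X)$. Order zero is then immediate: if $a,b\ge0$ with $a\sbt b=0$, then $\Phi(a)\Phi(b)=e\,\Phi(0)=0$. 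For complete positivity, any positive $x\in\M_r(\Cstar_{\sbt}(X))$ is $\|\cdot\|_{\sbt}$-approximated by elements $z^*\sbt z$ with $z\in\M_r(X)$ (write $x=c^*\sbt c$ in the $\Cstar$-algebra $\M_r(\Cstar_{\sbt}(X))$ and approximate $c$ from the dense $\M_r(X)$, using \cref{theorem 57 amplified}); by \cref{prop52'}(iv) amplified through \cref{theorem 57 amplified}, each $\Phi^{(r)}(z^*\sbt z)=z^*\sbt z$ is positive in $\M_r(B)$, and since $\Phi^{(r)}$ is continuous and $\M_r(B)_+$ is closed, $\Phi^{(r)}(x)\ge0$.

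The injective case is then a direct citation: once $\Phi$ is an injective c.p.c.\ order zero map, Remark~\ref{cor: order zero for unital}(iii) gives $\Phi^{(r)}(x)\ge0\iff x\ge0$ for all $r$ and all $x\in\M_r(\Cstar_{\sbt}(X))$.

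Finally, for the equivalence I would exploit that $\Phi=\id_X$ says precisely that $X$ is already $\|\cdot\|_{\sbt}$-complete (so $\Cstar_{\sbt}(X)=X$), and that $\Phi$ bounded below means $\|x\|=\|\Phi(x)\|\ge R'\|x\|_{\sbt}$ for some $R'>0$. Combined with the universal bound $\|x\|\le\|x\|_{\sbt}$, these two together are equivalent to $\|\cdot\|$ and $\|\cdot\|_{\sbt}$ being equivalent norms on $X$, under which completeness in one norm transfers to the other. If $X$ is $\|\cdot\|$-closed, \cref{equivalent norms} supplies the reverse bound $\|x\|_{\sbt}\le R\|x\|$, giving norm equivalence, hence $\Cstar_{\sbt}(X)=X$ (so $\Phi=\id_X$) and $\Phi$ bounded below; conversely norm equivalence makes $(X,\|\cdot\|)$ complete, i.e.\ $X$ closed in $B$. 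The main obstacle is really the order-zero step — keeping the two products straight and verifying that the bilinear identity $\Phi(a)\Phi(b)=e\,\Phi(a\sbt b)$ extends off the dense subspace; everything else is bookkeeping on top of \cref{equivalent norms}, \cref{prop52'}, and Remark~\ref{cor: order zero for unital}(iii).
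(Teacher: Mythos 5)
Your proposal is correct and follows essentially the same route as the paper: complete contractivity from \cref{equivalent norms}, complete positivity by approximating positive elements with $z^*\sbt z$ for $z\in\M_r(X)$ via \cref{theorem 57 amplified} and \cref{prop52'}, order zero by passing the relation $xy=e(x\sbt y)$ through a density/continuity argument, and the same bookkeeping for the injectivity and closedness statements. The only (harmless) variation is that you first extend the identity $\Phi(a)\Phi(b)=e\,\Phi(a\sbt b)$ to all of $\Cstar_{\sbt}(X)$ and then read off order zero, whereas the paper runs the approximation argument directly on orthogonal positive pairs.
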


\begin{proof}
It follows from \cref{equivalent norms} that the map $\Cstar_{\sbt}(X)\supset X\xlongrightarrow {\id_X} X\subset B$ is completely contractive and hence extends to a completely contractive map $\Phi\colon \Cstar_{\sbt}(X)\longrightarrow B$ with $\Phi(\Cstar_{\sbt}(X))\subset \overline{X}^{\|\cdot\|}$. To see that $\Phi$ is completely positive, fix $r\geq 1$. 
By \cref{theorem 57 amplified}, 
we may apply \cref{prop52'}\ref{prop52' it4}
and conclude that for each $x\in \M_r(X)$, 
$$\Phi^{(r)}(x^*{\sbt} x)=x^*{\sbt} x\in \M_r(B)_+.$$
Since $(\M_r(X),{\sbt})$ is a dense $^*$-subalgebra of $\M_r(\Cstar_{\sbt}(X))=\Cstar_{\sbt}(\M_r(X))$, it follows that $\{x^*{\sbt} x\ :\ x\in \M_r(X)\}$ is dense in $\M_r(\Cstar_{\sbt}(X))_+$, and so
we have 
\begin{align*}
\Phi^{(r)}(\M_r(\Cstar_{\sbt}(X))_+)&\subset \overline{\Phi^{(r)}(\{x^*{\sbt} x\ :\ x\in \M_r(X)\})}^{\|\cdot\|}\\
&=\overline{\{x^*{\sbt} x\ :\ x\in \M_r(X)\}}^{\|\cdot\|}\subset \M_r(B)_+.
\end{align*}
To see that $\Phi$ is order zero, suppose $a,b\in \Cstar_{\sbt}(X)_+$ with $a{\sbt} b=0$, and let $(x_n)_n,(y_n)_n\subset X$ such that $\lim_n\|x_n- a\|_{\sbt}=0$ and $\lim_n\|y_n- b\|_{\sbt}=0$. Then $\lim_n \|x_n{\sbt} y_n\|_{\sbt}=0$ and so
\begin{align*}
0&=\lim_n \|e\Phi(x_n{\sbt} y_n)\|\\&=\lim_n \|e(x_n{\sbt} y_n)\|\\&=\lim_n \|x_ny_n\|\\
&=\lim_n \|\Phi(x_n)\Phi(y_n)\|\\&=\|\Phi(a)\Phi(b)\|, 
\end{align*} 
using continuity of $\Phi$ for the first and last equalities. 

 Remark \ref{cor: order zero for unital}(iii) tells us that 
if $\Phi$ is injective, then for any $r\geq 1$ and $x\in \M_r(\Cstar_{\sbt}(X))$, we have $\Phi^{(r)}(x)\geq 0$ if and only if $x\geq 0$.

 If $X$ is closed in $\|\cdot\|$, then \cref{equivalent norms} tells us that $X$ is already complete with respect to $\|\cdot\|_{\sbt}$. Hence $\Cstar_{\sbt}(X)=X$ as sets, and $\Phi=\id_X$. In particular, $\Phi$ is injective with closed range and hence bounded below. On the other hand, if $\Phi$ is bounded below, then its image is closed in $B$, and so $\Phi(\Cstar_{\sbt}(X))= \overline{X}^{\|\cdot\|}$. If moreover $\Phi=\id_X$, then it follows that $X$ is closed in $B$.  
\end{proof}

Recall from Example \ref{prop52.5}(ii) that for a given a c.p.c.\ order zero map $\theta\colon A\longrightarrow B$ from a unital $\Cstar$-algebra, $\theta(A)$ and $\theta(1_A)$ satisfy the assumptions of \cref{theorem57}. 
In this case, it turns out that $\theta(A)$ is automatically closed with respect to the induced $\|\cdot\|_{\sbt}$ norm. This is essentially \cref{bdd below perp gives isoclass} in our $\sbt$-language. 

\begin{proposition}\label{X=Xbullet}
Suppose $\theta\colon A\longrightarrow B$ is a c.p.c.\ order zero map between $\Cstar$-algebras and there exists a local order unit $e\in B_+^1\cap \theta(A)'$ for $\theta(A)$ such that $\theta(a)\theta(b)=e\theta(ab)$ for all $a,b\in A$. 
Then $\theta(ab)=\theta(a){\sbt}\theta(b)$ for all $a,b\in A$, and $\eta\coloneqq\id_{\theta(A)}\circ \theta\colon A\longrightarrow \Cstar_{\sbt}(\theta(A))$ is a surjective $^*$-homomorphism which is a $^*$-isomorphism when $\theta$ is injective. 
In particular, $\theta(A)$ is closed with respect to $\|\cdot\|_{\sbt}$. 
\end{proposition}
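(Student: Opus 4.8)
First I would observe that the hypotheses place us squarely in the setting of \cref{theorem57}: writing $X\coloneqq\theta(A)$, the identity $\theta(a)\theta(b)=e\,\theta(ab)$ shows $X^2\subset eX$, while $e\in B_+^1\cap X'$ is assumed to be a local order unit, so the product $\sbt$ and the norm $\|\cdot\|_{\sbt}$ are available. Recall that $x\sbt y$ is by definition the unique element of $X$ with $xy=e(x\sbt y)$. Since the hypothesis reads exactly $\theta(a)\theta(b)=e\,\theta(ab)$ with $\theta(ab)\in X$, nondegeneracy of $e$ (\cref{order unit}(i)) immediately forces $\theta(a)\sbt\theta(b)=\theta(ab)$, which is the first claim and requires no real computation.

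Next I would verify that $\eta=\id_{\theta(A)}\circ\theta\colon A\longrightarrow\Cstar_{\sbt}(X)$ is an (algebraic) $^*$-homomorphism into the $\Cstar$-algebra $\Cstar_{\sbt}(X)$. Linearity is inherited from $\theta$; adjoint-preservation holds because $\theta$ is $^*$-linear (being c.p.) and the involution on $(X,\sbt)$ is precisely the one inherited from $B$, as established in the proof of \cref{theorem57}; and multiplicativity is exactly the relation $\theta(ab)=\theta(a)\sbt\theta(b)$ just proved. So $\eta$ is a $^*$-homomorphism whose image is $X$, sitting inside $\Cstar_{\sbt}(X)$ as a dense $^*$-subalgebra.

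The crucial final step is to promote this dense image to all of $\Cstar_{\sbt}(X)$, and this is where the one genuine idea enters: a $^*$-homomorphism between $\Cstar$-algebras automatically has \emph{closed} range (its image being $^*$-isomorphic, via the induced isometry, to $A/\ker\eta$). Thus $\eta(A)=X$ is simultaneously dense and closed in $\Cstar_{\sbt}(X)$, forcing $\eta(A)=\Cstar_{\sbt}(X)$; this gives surjectivity of $\eta$ and, as a bonus, the closedness of $\theta(A)$ with respect to $\|\cdot\|_{\sbt}$. When $\theta$ is injective, $\eta$ is injective as well (since $\id_X$ is), and a bijective $^*$-homomorphism is a $^*$-isomorphism. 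The point to emphasize is that one should resist proving surjectivity by hand from density: the closed-range property of $^*$-homomorphisms does both jobs at once and is the real engine of the argument.
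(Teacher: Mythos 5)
Your proposal is correct and follows essentially the same route as the paper: deduce $\theta(ab)=\theta(a)\sbt\theta(b)$ from nondegeneracy of $e$, observe that $\eta$ is then a $^*$-homomorphism with dense image $\theta(A)$, and invoke the automatic closedness of the range of a $^*$-homomorphism between $\Cstar$-algebras to get surjectivity and $\|\cdot\|_{\sbt}$-closedness simultaneously. If anything, your write-up makes the ``dense $+$ closed $\Rightarrow$ all'' step more explicit than the paper does.
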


\begin{proof}
Since $(\theta(A),e)$ satisfy the assumptions of \cref{theorem57} (see Example \ref{prop52.5}(ii)) we have a product $\sbt$ on $\theta(A)$ satisfying $e\theta(ab) = \theta(a)\theta(b)\\ = e(\theta(a){\sbt} \theta(b))$ for all $a,b\in A$.  It then follows from nondegeneracy (\cref{order unit}(i)) that $\theta(ab)=\theta(a){\sbt} \theta(b)$ for all $a,b\in A$. 

Since the map $\id_{\theta(A)}:(\theta(A),\|\cdot\|)\longrightarrow (\theta(A),\|\cdot\|_{\sbt})$ between the normed $^*$-linear spaces 
is linear and $^*$-preserving (though a priori not necessarily bounded), the map $\eta\colon A\longrightarrow \Cstar_{\sbt}(\theta(A))$ is also linear and $^*$-preserving with $\eta(A)=\theta(A)$ dense in $\Cstar_{\sbt}(\theta(A))$. Moreover, for each $a,b\in A$, we have $\eta(ab)=\theta(ab)=\theta(a){\sbt} \theta(b)=\eta(a){\sbt} \eta(b)$. Hence $\eta$ is a surjective $^*$-homomorphism, which implies that its image, $\theta(A)$, is closed in $\Cstar_{\sbt}(\theta(A))$, i.e. $\theta(A)=\overline{\theta(A)}^{\|\cdot\|_{\sbt}}$. When $\theta$ is moreover injective, so is $\eta$. 
\end{proof}

\begin{remark}
It follows from the (proof of the) structure theorem for c.p.c.\ order zero maps (\cite[Theorem 3.3]{WZ09}) that the kernel of any c.p.c.\ order zero map $\theta\colon A\longrightarrow B$ from a unital $\Cstar$-algebra is an ideal. In this case, $\Cstar_{\sbt}(\theta(A))$ is isomorphic to $A/\ker({\theta})$, which is in turn isomorphic to $\pi_\theta(A)$ where $\pi_\theta\colon A\longrightarrow \mathcal{M}(\Cstar(\theta(A)))$ is the $^*$-homomorphism guaranteed by the structure theorem. 
\end{remark}

We are now equipped to say moreover when a self-adjoint linear subspace of a $\Cstar$-algebra is actually the image of a c.p.c.\ order zero map from a unital $\Cstar$-algebra. 

\begin{proposition}\label{image of c.p.c. order zero}
Let $B$ be a $\Cstar$-algebra with $e\in B_+^1$ and $X\subset B$ a self-adjoint linear subspace. Then the following are equivalent.
\begin{enumerate}[label=\textnormal{(\roman*)}]
    \item There exists a unital $\Cstar$-algebra $A$ and c.p.c.\ order zero map $\theta\colon A\longrightarrow B$ with $\theta(A)=X$ and $\theta(1_A)=e$. 
    \item $e\in X\cap X'$ is a local order unit for $X$, $X^2= eX$, and $X=\overline{X}^{\|\cdot\|_{\sbt}}$.
\end{enumerate}
\end{proposition}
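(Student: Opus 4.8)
The plan is to prove both implications by assembling results already established, since each direction reduces to invoking the structure theorem together with the $\sbt$-machinery of \cref{Section:Def bullet} and \cref{sect: a c.p.c. order zero map}.

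For (i) $\Rightarrow$ (ii), I would start from a c.p.c.\ order zero map $\theta\colon A\to B$ with $\theta(A)=X$ and $\theta(1_A)=e$ and verify the conditions in (ii) one at a time. That $e\in X$ is immediate from $e=\theta(1_A)\in\theta(A)=X$, and that $e\in X'$ is a local order unit for $X$ is exactly \cref{Order units in a unital A}(iv) (via the structure theorem, \cref{thm: structure thm}). For the multiplicative condition $X^2=eX$, I would use the characterization \eqref{dager} of \cref{cor: order zero for unital}(ii): it gives $\theta(a)\theta(b)=e\,\theta(ab)$, so that $X^2=\{e\theta(ab):a,b\in A\}$; since $A$ is unital, $\{ab:a,b\in A\}=A$, whence $\{\theta(ab):a,b\in A\}=\theta(A)=X$ and therefore $X^2=eX$. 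Finally, the completeness statement $X=\overline{X}^{\|\cdot\|_{\sbt}}$ is precisely \cref{X=Xbullet}.

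For (ii) $\Rightarrow$ (i), the hypotheses are exactly those of \cref{theorem57} (a commuting local order unit $e\in X'$ with $X^2\subset eX$), so that construction produces the $\Cstar$-algebra $\Cstar_{\sbt}(X)$. Because $X=\overline{X}^{\|\cdot\|_{\sbt}}$, we have $\Cstar_{\sbt}(X)=X$ as $^*$-vector spaces, and the map $\Phi$ of \cref{prop 61} is simply $\id_X$, which is c.p.c.\ order zero with image $X$. Since moreover $e\in X$, \cref{theorem57} tells us that $e$ is the unit of $(X,\sbt)$, so $A\coloneqq\Cstar_{\sbt}(X)$ is unital with $1_A=e$. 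Taking $\theta\coloneqq\Phi=\id_X$ then yields a c.p.c.\ order zero map with $\theta(A)=X$ and $\theta(1_A)=\Phi(e)=e$, completing the construction.

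The proof is thus essentially bookkeeping: the only genuinely substantive ingredients are the completeness condition, handled by \cref{X=Xbullet} in the forward direction and by the construction of \cref{theorem57} together with \cref{prop 61} in the backward direction. The step I expect to require the most care is the equality (rather than mere inclusion) $X^2=eX$ in the forward direction, where it is unitality of $A$ that upgrades the surjectivity of $\theta$ to $\{ab:a,b\in A\}=A$; this is the point that would fail for a non-unital domain.
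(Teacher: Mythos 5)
Your proof is correct and follows essentially the same route as the paper: the forward direction is the content of Example \ref{prop52.5}(ii) (whose proof of $X^2=eX$ you simply unpack via \eqref{dager} and unitality of $A$) together with \cref{X=Xbullet}, and the backward direction combines \cref{theorem57} with \cref{prop 61} exactly as the paper does.
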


\begin{proof}
Example \ref{prop52.5}(ii) and \cref{X=Xbullet} give (i) $\Longrightarrow$ (ii).

On the other hand, if $e\in X\cap X'$ is a local order unit such that $X^2= eX$ and $X=\overline{X}^{\|\cdot\|_{\sbt}}$, then \cref{theorem57} gives us a unital $\Cstar$-algebra $\Cstar_{\sbt}(X)$, and \cref{prop 61} gives us $\Phi=\id_X\colon \Cstar_{\sbt}(X)\longrightarrow B$, the desired c.p.c.\ order zero map. 
\end{proof}


\section{Unitizations}\label{Z}

\noindent In this section, we describe a unitization of our construction and consider the dependence of the construction on the given local order unit. This will also yield the proof of \cref{theorem C} (by combining Proposition \ref{theorem57 span} with Remarks \ref{rem: unitization}).

\begin{proposition}\label{theorem57 span}
Let $B$ be a $\Cstar$-algebra and $X\subset B$ a self-adjoint linear subspace, and suppose there exists a local order unit $e\in B_+^1\cap X'$ for $X$ so that $X^2\subset eX$. 
Set $Z\coloneqq\Span\{X,e\}$. 

Then $e\in B_+^1\cap Z'$ is a local order unit for $Z$ so that $Z^2=eZ$, and the induced product and norm from \emph{\cref{theorem57}} agree with those on $X$. 

Moreover, the inclusion $X\subset Z$ induces an inclusion of $\Cstar_{\sbt}(X)$ in $\Cstar_{\sbt}(Z)$ as a two-sided closed ideal, which has co-dimension 1 unless $e\in X$, and 
the map $\id_Z$  extends to a c.p.c.\ order zero map $\Phi^\dagger\colon \Cstar_{\sbt}(Z)\longrightarrow  B$ which agrees with $\id_Z$ on $Z$. 
\end{proposition}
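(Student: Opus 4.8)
The plan is to treat the three assertions in turn, reducing every statement about $Z$ to a statement about $X$ by means of the nondegeneracy of $e$ (\cref{order unit}(i)), which applies verbatim to $Z=\Span\{X,e\}$ once we know $e$ is a commuting local order unit for $Z$. First I would check that $(Z,e)$ again satisfies the hypotheses of \cref{theorem57}. That $e\in B_+^1\cap Z'$ is immediate, since $e$ commutes with $X$ by assumption and with itself. For the local order unit property I would take a self-adjoint $z\in Z$; in the nontrivial case $e\notin X$ we have $X\cap\C e=\{0\}$, so the decomposition $Z=X\oplus\C e$ forces $z=x+\lambda e$ with $x\in X_{\mathrm{s.a.}}$ and $\lambda\in\R$, and then $Re\geq x$ together with $\lambda e\leq|\lambda|e$ gives $(R+|\lambda|)e\geq z$ (the case $e\in X$ is trivial, as then $Z=X$). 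For the multiplicative structure I would expand $(x_1+\lambda_1 e)(x_2+\lambda_2 e)$ and note each of the four terms lies in $eZ$, using $X^2\subset eX$, $eX\subset eZ$, and $e^2\in eZ$; this yields $Z^2\subset eZ$, while $eZ\subset Z^2$ is automatic because $e\in Z$, so in fact $Z^2=eZ$. The two products then agree on $X$, since both satisfy $xy=e(x\sbt y)$ there and nondegeneracy forces $x\sbt_Z y=x\sbt_X y$; the two norms agree because the defining formula $\|x\|_{\sbt}=\lim_j\|h_j(e)x\|$ depends only on $e$ and the ambient norm, not on the surrounding subspace.

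Next, for the ideal claim, the agreement of products and norms means $(X,\sbt)$ sits inside $(Z,\sbt)$ as an isometric $^*$-subalgebra, so passing to completions embeds $\Cstar_{\sbt}(X)$ as a closed $^*$-subalgebra of $\Cstar_{\sbt}(Z)$. To see it is a two-sided ideal I would verify $Z\sbt X\subset X$ on the dense $^*$-subalgebras: for $z=x'+\lambda e$ and $x\in X$, bilinearity gives $z\sbt x=(x'\sbt x)+\lambda(e\sbt x)=(x'\sbt x)+\lambda x\in X$, using $X\sbt X\subset X$ and that $e$ is the $\sbt$-unit of $Z$; the reverse inclusion follows by applying $*$. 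Continuity and density then upgrade this to $\Cstar_{\sbt}(Z)\sbt\Cstar_{\sbt}(X)\subset\Cstar_{\sbt}(X)$ and $\Cstar_{\sbt}(X)\sbt\Cstar_{\sbt}(Z)\subset\Cstar_{\sbt}(X)$.

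For the codimension, I would argue that $\Cstar_{\sbt}(X)+\C e$ is closed (a finite-dimensional enlargement of a closed subspace of a Banach space) and contains the dense subspace $Z$, hence equals $\Cstar_{\sbt}(Z)$; thus the ideal has codimension at most $1$, equal to $1$ precisely when $e\notin\Cstar_{\sbt}(X)$ and to $0$ when $e\in X$ (as then $Z=X$). This dichotomy is the delicate point and the main obstacle. The clean statement ``codimension $1$ unless $e\in X$'' holds as soon as $X$ is complete in $\|\cdot\|_{\sbt}$ (the setting of \cref{theorem C} and of our applications), for then $\Cstar_{\sbt}(X)=X$ and $e\notin X$ gives $e\notin\Cstar_{\sbt}(X)$ directly. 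Without completeness, $e$ can lie in the $\|\cdot\|_{\sbt}$-completion of $X$ without lying in $X$ itself, so in full generality the codimension is governed by the condition $e\notin\Cstar_{\sbt}(X)$ rather than by $e\notin X$; identifying $\Cstar_{\sbt}(Z)$ with the minimal versus the forced unitization of $\Cstar_{\sbt}(X)$ is then the content I would defer to the subsequent remarks.

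Finally, for the extension of $\id_Z$, I would simply apply \cref{prop 61} to the pair $(Z,e)$, which the first paragraph shows satisfies all of its hypotheses. This produces a c.p.c.\ order zero map $\Phi^\dagger\colon\Cstar_{\sbt}(Z)\longrightarrow B$ extending $\id_Z$, with image in $\overline{Z}^{\|\cdot\|}$. Since $\Phi^\dagger$ restricts to $\id_X$ on the dense subalgebra of $\Cstar_{\sbt}(X)$, continuity identifies $\Phi^\dagger|_{\Cstar_{\sbt}(X)}$ with the map $\Phi$ of \cref{prop 61}, so $\Phi^\dagger$ is the desired common extension and agrees with $\id_Z$ on $Z$.
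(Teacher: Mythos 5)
Your proposal follows the paper's proof almost step for step: reduce to the case $e\notin X$, verify that $(Z,e)$ satisfies the hypotheses of \cref{theorem57} by expanding products of elements $x+\lambda e$, use nondegeneracy (\cref{order unit}(i)) to see that the two products agree on $X$, observe that the norms agree by definition, identify $\Cstar_{\sbt}(Z)=\Span\{\Cstar_{\sbt}(X),e\}$ via the finite-codimension Banach space fact, and obtain $\Phi^{\dagger}$ by applying \cref{prop 61} to $(Z,e)$. The one place you genuinely diverge is the codimension dichotomy, and your caution there is well founded rather than a defect: the paper's proof argues that if $\Cstar_{\sbt}(X)=\Cstar_{\sbt}(Z)$ then $(X,\sbt)$ is a dense two-sided ideal of the unital algebra $\Cstar_{\sbt}(Z)$ and hence contains the unit $e$, but the multiplication computation only exhibits $X$ as an ideal of $(Z,\sbt)$, not of the completion, so the ``dense ideal of a unital Banach algebra is everything'' argument does not apply. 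The dichotomy as literally stated can in fact fail without a completeness hypothesis: take $B=C([1,2])$, $e=1_{B}$, and $X$ the set of restrictions to $[1,2]$ of polynomials vanishing at $0$; then $e\notin X$, $X^{2}\subset X=eX$, $\|\cdot\|_{\sbt}=\|\cdot\|$, and Stone--Weierstrass gives $\Cstar_{\sbt}(X)=C([1,2])=\Cstar_{\sbt}(Z)$, so the ideal has codimension $0$ although $e\notin X$. Your reformulation --- codimension $1$ exactly when $e\notin\Cstar_{\sbt}(X)$, which collapses to $e\notin X$ under the completeness assumption in force in \cref{theorem C} and in all of the paper's applications --- is the correct general statement, and the rest of your argument is complete and matches the intended one.
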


\begin{proof}
If $e\in X$, then there is nothing to show, so we assume $e\notin X$. We want to apply \cref{theorem57} to $Z$. One readily verifies that $e\in Z'\cap B_+^1$ is a local order unit for $Z$. 
For $\alpha, \beta\in \C$ and $x,y\in X$, let $z\in X$ so that $xy=ez$. Then 
\begin{align*}
    (\alpha e+x)(\beta e+y)&=\alpha\beta e^2 +\alpha ey +\beta ex+ xy\\
    &=\alpha\beta e^2 +\alpha ey +\beta ex+ ez\in eZ. 
\end{align*}
Hence $Z^2=eZ$. Applying \cref{theorem57} to both $(X,e)$ and $(Z,e)$ gives us 
associative bilinear maps ${\sbt}\colon X\times X\longrightarrow X$ and $\star\colon Z\times Z\longrightarrow Z$ such that for all $x,y\in X$ and $w,z\in Z$, 
\begin{align*}
    xy=e(x{\sbt} y) \hspace{.5 cm} \text{ and } \hspace{.5 cm} wz=e(w\star z).
\end{align*}
Nondegeneracy (\cref{order unit}(i)) for $(Z,e)$ tells us that for all $x,y\in X\subset Z$, we have
\[e(x\star y)=xy=e(x{\sbt} y)\ \Longrightarrow x\star y = x{\sbt} y.\] 
Then $\star\colon Z\times Z\longrightarrow Z$ extends ${\sbt}\colon X\times X\longrightarrow X$, and we may write ${\sbt}$ for the $\star$ multiplication on $Z$ without confusion. 

By definition the norm $\|\cdot\|_{\sbt}\colon Z\longrightarrow [0,\infty)$ guaranteed by Theorem \ref{theorem57} restricts to the norm $\|\cdot\|_{\sbt}\colon X\longrightarrow [0,\infty)$ guaranteed by Theorem \ref{theorem57}, and hence 
$\Cstar_{\sbt}(Z)=\overline{\Span\{X,e\}}^{\|\cdot\|_{_{\sbt}}}=\Span\{\overline{X}^{\|\cdot\|_{_{\sbt}}},e\}=\Span\{\Cstar_{\sbt}(X),e\}$.\footnote{As a consequence of the Banach space fact from footnote \ref{B-space closures1}, it follows that for any linear subspaces $\mathcal{Y}$ and $\mathcal{Z}$ of a Banach space $\mathcal{X}$ with  $\mathcal{Z}$ finite-dimensional, $\overline{\Span\{\mathcal{Y},\mathcal{Z}\}}=\Span\{\overline{\mathcal{X}},\mathcal{Z}\}$.}\label{B-space closures2} 
Hence the inclusion $X\subset Z$ induces an isometric embedding $\Cstar_{\sbt}(X)\subset \Cstar_{\sbt}(Z)$. 
Since $e$ is the unit of $\Cstar_{\sbt}(Z)$ (by \cref{theorem57}), it follows that $\Cstar_{\sbt}(X)$ sits as a closed 2-sided ideal with co-dimension at most 1. 
Note that if $\Cstar_{\sbt}(X)=\Cstar_{\sbt}(Z)$, then $(X,{\sbt})\subset \Cstar_{\sbt}(Z)$ is a dense 2-sided ideal, which must then contain the unit $e$. 
So $\Cstar_{\sbt}(X)$ has co-dimension 1 if and only if $e\notin X$.

It follows from \cref{prop 61} that $\id_Z$ extends to a c.p.c.\ order zero map $\Phi^\dagger\colon \Cstar_{\sbt}(Z)\longrightarrow B$, and 
$X\subset \Cstar_{\sbt}(Z)$ is closed with respect to $\|\cdot\|_{\sbt}$ if and only if $Z$ is.\footnote{See footnote \ref{B-space closures1}.}
\end{proof}

\begin{remarks}\label{rem: unitization}
(i) In this generality, there is nothing ruling out the possibility that $e\notin X$ but $\Cstar_{\sbt}(X)$ is nonetheless unital, in which case $\Cstar_{\sbt}(Z)$ would end up being the forced unitization $\Cstar_{\sbt}(X)^\dagger$ of $\Cstar_{\sbt}(X)$. (Indeed a unital $\Cstar$-algebra sitting inside its forced unitization is already an example of this.) 
However, note that $\Cstar_{\sbt}(X)\triangleleft \Cstar_{\sbt}(Z)$ is an essential ideal if and only if $\Cstar_{\sbt}(X)$ is not unital. Hence, when we really want to deal with
$\Cstar_{\sbt}(X)^\sim$, 
we use $\Cstar_{\sbt}(Z)$ when 
$\Cstar_{\sbt}(X)\triangleleft \Cstar_{\sbt}(Z)$ is essential and 
$\Cstar_{\sbt}(X)^\sim$ otherwise. In this case, we write $\Phi^{\sim}$ for $\Phi^\dagger$. 

(ii) As a Banach space, 
$Z$ is closed with respect to $\|\cdot\|_{\sbt}$ if and only if $X$ is.\footnote{See footnote \ref{B-space closures1}.} Likewise, when $\Cstar_{\sbt}(Z)=\Cstar_{\sbt}(X)^\sim$, $\Phi^{\sim}$ is bounded below if and only if $\Phi$ is. Indeed, since $\Cstar_{\sbt}(Z)=\Span\{\Cstar_{\sbt}(X),e\}$ and $\Phi$ is linear, it follows that $\Phi^{\sim}\big(\Cstar_{\sbt}(Z)\big)=\Span\{\Phi\big(\Cstar_{\sbt}(X)\big),e\}$, and so $\Phi$ has closed image if and only if $\Phi^{\sim}$ does. Clearly $\Phi$ is injective if $\Phi^{\sim}$ is, and the other direction we checked in 
\cref{invertible c.p. order zero and order scale}(ii). 
\end{remarks}

Now, \cref{theorem C} follows from \cref{theorem57 span} and  \cref{rem: unitization}. 

As a corollary to \cref{theorem57 span}, we get sufficient conditions for when a c.p.c.\ order zero map $A\longrightarrow B$ from a non-unital $\Cstar$-algebra extends to a c.p.c.\ order zero map $A^{\sim}\longrightarrow B$.  

\begin{corollary}\label{unitize order zero'}
Let $A$ be a non-unital $\Cstar$-algebra and $\theta\colon A\longrightarrow B$ a c.p.c.\ order zero map. If there exists a local order unit $e\in B_+^1\cap \theta(A)'$ for $\theta(A)$ such that $e\theta(ab)=\theta(a)\theta(b)$ for all $a,b\in A$, then $\theta^{\sim}\colon A^{\sim}\longrightarrow B$ given by $\theta^{\sim}(a+\lambda1_{A^{\sim}})=\theta(a)+\lambda e$ is a c.p.c.\ order zero extension of $\theta$.
\end{corollary}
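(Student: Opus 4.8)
The plan is to factor the desired extension $\theta^{\sim}$ as a $^*$-homomorphism into $\Cstar_{\sbt}(Z)$ followed by the c.p.c.\ order zero map $\Phi^\dagger$ produced by \cref{theorem57 span}. First I would set $X\coloneqq\theta(A)$ and observe that the hypotheses place us exactly in the situation of \cref{theorem57 span}: the element $e\in B_+^1\cap X'$ is a local order unit for $X$, and $X^2\subset eX$ since $\theta(a)\theta(b)=e\theta(ab)$. Setting $Z\coloneqq\Span\{X,e\}$, \cref{theorem57 span} then supplies the product $\sbt$, the norm $\|\cdot\|_{\sbt}$, and a c.p.c.\ order zero map $\Phi^\dagger\colon\Cstar_{\sbt}(Z)\longrightarrow B$ restricting to $\id_Z$ on $Z$; moreover $e$ is the unit of $\Cstar_{\sbt}(Z)$ by \cref{theorem57}. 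In parallel, \cref{X=Xbullet} applies verbatim (it does not require $A$ to be unital) and tells me that $\theta(ab)=\theta(a)\sbt\theta(b)$ and that $\eta\coloneqq\id_{\theta(A)}\circ\theta\colon A\longrightarrow\Cstar_{\sbt}(X)$ is a surjective $^*$-homomorphism.

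Next I would promote $\eta$ to the unitization. Since $A$ is non-unital, $A^{\sim}=A\oplus\C 1_{A^{\sim}}$ as a vector space, so the formula $\bar\eta(a+\lambda 1_{A^{\sim}})\coloneqq\eta(a)+\lambda e$ defines a unital, linear, $^*$-preserving map $\bar\eta\colon A^{\sim}\longrightarrow\Cstar_{\sbt}(Z)$. The one genuine computation is multiplicativity with respect to $\sbt$: expanding $\bar\eta\big((a+\lambda 1_{A^{\sim}})(b+\mu 1_{A^{\sim}})\big)$ and $\bar\eta(a+\lambda 1_{A^{\sim}})\sbt\bar\eta(b+\mu 1_{A^{\sim}})$ and using that $\eta$ is multiplicative together with the fact that $e$ is the unit of $\Cstar_{\sbt}(Z)$ (so that $e\sbt\eta(a)=\eta(a)\sbt e=\eta(a)$ and $e\sbt e=e$) shows the two agree. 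Thus $\bar\eta$ is a unital $^*$-homomorphism of $\Cstar$-algebras and is in particular c.p.c.

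Finally I would define $\theta^{\sim}\coloneqq\Phi^\dagger\circ\bar\eta$. A $^*$-homomorphism followed by a c.p.c.\ order zero map is again c.p.c.\ order zero: complete positivity and contractivity are immediate, while orthogonality is preserved because a $^*$-homomorphism carries orthogonal positive elements to orthogonal positive elements, which $\Phi^\dagger$ then sends to a zero product. It remains only to identify the formula: for $a\in A$ and $\lambda\in\C$ we have $\bar\eta(a+\lambda 1_{A^{\sim}})=\eta(a)+\lambda e=\theta(a)+\lambda e\in Z$, and since $\Phi^\dagger$ restricts to $\id_Z$ on $Z$ we obtain $\theta^{\sim}(a+\lambda 1_{A^{\sim}})=\theta(a)+\lambda e$, matching the claimed formula and restricting to $\theta$ on $A$. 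I expect the crux to be the passage to $Z$: it is precisely the presence of a genuine unit $e$ in $\Cstar_{\sbt}(Z)$ that turns the naive linear extension $\bar\eta$ into an honest unital $^*$-homomorphism, whereas working inside $\Cstar_{\sbt}(X)$ alone, where $e$ need not lie, would not furnish this. Everything else is formal.
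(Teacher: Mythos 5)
Your proposal is correct and follows essentially the same route as the paper: both set $Z=\Span\{\theta(A),e\}$, invoke \cref{theorem57 span} for the c.p.c.\ order zero map $\Phi^\dagger$, extend the $^*$-homomorphism $\eta$ from \cref{X=Xbullet} to a unital $^*$-homomorphism $A^{\sim}\longrightarrow \Cstar_{\sbt}(Z)$ sending $1_{A^{\sim}}$ to $e$, and compose. The paper is merely terser about the multiplicativity of the unital extension, which you spell out correctly.
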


\begin{proof}
Set $X=\theta(A)$ and $Z=\Span\{X,e\}$. \cref{X=Xbullet} tells us  $X=\Cstar_{\sbt}(X)$ as sets, and so 
 by Remark \ref{rem: unitization}(ii) $Z=\Cstar_{\sbt}(Z)$ as well,  and  
 the c.p.c.\ order zero map $\Phi^\dagger\colon \Cstar_{\sbt}(Z)\longrightarrow B$ from \cref{theorem57 span} is just $\id_Z$. 

Let $\eta\coloneqq\id_X\circ\theta\colon A\longrightarrow \Cstar_{\sbt}(X)$ be the $^*$-homomorphism from Proposition \ref{X=Xbullet}. Then $\eta$ extends to a unital $^*$-homomorphism $\eta^\sim\colon A^{\sim}\longrightarrow \Cstar_{\sbt}(Z)$ given by $\eta^\sim(a+\lambda 1_{A^{\sim}})=\eta(a) +\lambda e= \theta(a) +\lambda e$. Hence $\theta^{\sim}\coloneqq\Phi^\dagger\circ\eta^\sim$ is a c.p.c.\ order zero map, 
and it is given by 
$\theta^{\sim}(a+\lambda1_{A^{\sim}})=\theta(a)+\lambda e$, as desired. 
\end{proof}

For the remainder of this section, we consider how our construction of $\Cstar_{\sbt}(X)$ depends on the distinguished  commuting local order unit $e$. The following is essentially what is employed in \cite[Proposition 3.5]{CW1}. 

\begin{proposition}\label{mult is order unit}
Let $B$ be a $\Cstar$-algebra and $X\subset B$ a self-adjoint linear subspace with local order unit $e\in B_+^1\cap X'$ such that $X^2\subset eX$. Suppose $h\in B_+^1$ satisfies $hx=ex$ for all $x\in X$. Then $h\in X'$ and $h$ is a local order unit for $X$ such that $X^2\subset hX$. If $e$ is moreover 
an        order unit
then $h$ is also an        order unit 
with the same  
scaling factor as $e$. Moreover, the products and norms induced by $e$ and $h$ on $X$ agree, and the induced $\Cstar$-algebras are equal.
\end{proposition}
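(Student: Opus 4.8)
The plan is to first promote the defining relation $hx=ex$ from $X$ to all of $\Cstar(X)$, and then read off every assertion from this together with the two technical facts in \cref{commuting order unit}. Since $X$ is self-adjoint, applying the relation to $x^{*}$ and taking adjoints gives $xh=xe$, and as $e\in X'$ this reads $xh=xe=ex=hx$, so $h\in X'$. Moreover $\{c\in B:hc=ec\}$ is a norm-closed subalgebra of $B$ (closedness is immediate, and $h(cd)=(hc)d=(ec)d=e(cd)$) containing $X$; hence it contains the $^*$-algebra generated by $X$ (which is self-adjoint because $X=X^{*}$) and therefore its closure. Thus $hc=ec$ for every $c\in\Cstar(X)$, and this identity is the workhorse for the whole argument.

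The structural claims are then short. For $X^{2}\subset hX$, given $x,y\in X$ I would write $xy=ez$ with $z\in X$ (possible by $X^{2}\subset eX$ and nondegeneracy, \cref{order unit}), and note $ez=hz$ since $z\in X$, so $xy=hz\in hX$. For the local order unit property, fix $x\in X_{\mathrm{s.a.}}$ and choose $R$ with $Re\ge|x|$ (\cref{commuting order unit}\ref{commuting order unit (b)}); by \cref{commuting order unit}\ref{eq: hashtag} this is equivalent to $Re|x|\ge x^{2}$, and since $|x|\in\Cstar(X)$ we have $e|x|=h|x|$, whence $Rh|x|\ge x^{2}$ and, applying \cref{commuting order unit}\ref{eq: hashtag} once more (legitimate because $h\in X'$), $Rh\ge|x|\ge x$. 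Taking $R=K\|x\|$ in this same computation shows that if $e$ is an order unit with scaling factor $K$ then $h$ is an order unit with scaling factor at most $K$; running the identical argument with the roles of $e$ and $h$ interchanged (the relation $hx=ex$ is symmetric, and $h$ is now known to be a commuting local order unit in $B_{+}^{1}$) gives the reverse inequality, so the scaling factors agree.

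That the two products coincide is formal. Writing $x{\sbt}y$ and $x{\star}y$ for the products from \cref{theorem57} induced by $e$ and $h$, we have $xy=e(x{\sbt}y)=h(x{\star}y)$; since $x{\sbt}y\in X$ we also have $e(x{\sbt}y)=h(x{\sbt}y)$, so $h(x{\star}y)=h(x{\sbt}y)$, and nondegeneracy of $h$ (\cref{order unit}) gives $x{\star}y=x{\sbt}y$.

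The substantive point, and the step I expect to be the main obstacle, is that the two norms agree. Here I would first record, for self-adjoint $a\in X$, the identity $\|a\|_{\sbt}=\min\{R\ge 0:Re\ge|a|\}$: evaluating $\|a\|_{\sbt}=\sup_{j}\|h_{j}(e)a\|$ (\cref{norm}) in the joint functional calculus of the commuting pair $(e,a)$, the supremum equals $\sup\{|t|/s\}$ over the joint spectrum with $s>0$, which is precisely the least $R$ with $Rs\ge|t|$, i.e.\ with $Re\ge|a|$. Now \cref{commuting order unit}\ref{eq: hashtag} gives $Re\ge|a|\iff Re|a|\ge a^{2}$, and since $e|a|=h|a|$ this is equivalent to $Rh|a|\ge a^{2}\iff Rh\ge|a|$; hence the two minima coincide, so the $e$-norm and the $h$-norm agree on self-adjoint elements. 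The $\Cstar$-identity $\|x\|_{\sbt}^{2}=\|x^{*}{\sbt}x\|_{\sbt}$ (with $x^{*}{\sbt}x$ self-adjoint) then upgrades this to all of $X$. Since the $^*$-algebra $(X,{\sbt})$ and the norm $\|\cdot\|_{\sbt}$ are thereby seen to be literally the same whether built from $e$ or from $h$, their completions coincide, giving the asserted equality of the associated $\Cstar$-algebras.
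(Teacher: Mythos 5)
Your proof is correct, and apart from one step it follows the same route as the paper: the promotion of $hx=ex$ to $\Cstar(X)$, the verification that $h\in X'$ is a local order unit with $X^2\subset hX$, the transfer of the scaling factor via \cref{commuting order unit}\ref{eq: hashtag} applied to $e|x|=h|x|$, and the identification of the two products by nondegeneracy are all essentially the paper's argument. Where you genuinely diverge is the equality of the norms. The paper disposes of this by asserting $h_j(e)x=h_j(h)x$ for all $x\in X$ and $j\geq 1$ (which ultimately rests on the power identity $e^nx=h^nx$, obtainable by induction from $ea=ha$ and $ae=ah$ on $\Cstar(X)$, though the paper leaves that induction implicit). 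You instead prove the intrinsic formula $\|a\|_{\sbt}=\min\{R\geq 0: Re\geq |a|\}$ for self-adjoint $a$ via the joint functional calculus of $(e,a)$, transfer the defining order inequality from $e$ to $h$ through \cref{commuting order unit}\ref{eq: hashtag} and $e|a|=h|a|$, and then upgrade to general elements with the $\Cstar$-identity. Your route avoids the power identity entirely and, as a byproduct, exhibits $\|\cdot\|_{\sbt}$ as an order-unit-type norm determined purely by the matrix order data and $e$, which is a pleasant complement to \cref{completely isometric}; the paper's route is shorter once one grants the functional-calculus identity. Both are valid.
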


\begin{proof}
For $x,y\in X$, there exists $z\in X$ so that $xy=ez=hz$, and so it follows that $X^2\subset hX$. Moreover, 
\[\|hx-xh\|=\|ex-xh\|=\|xe-xh\|=\|ex^*-hx^*\|=0,\]
and so $h\in X'$. 

In order to check that $h$ is a local order unit, first note that for all $a\in \Cstar(X)\subset B$ (the $\Cstar$-algebra generated by $X$ in $B$) we also have $ea=ha$. In particular, $e|x|=h|x|$ for all $x\in X$. Let $x\in X$ be self-adjoint. Then \cref{commuting order unit}\ref{commuting order unit (b)} guarantees an $R>0$ so that $Re\geq |x|$, which by \cref{commuting order unit}\ref{eq: hashtag} implies $Re|x|\geq x^*x$. Since $Re|x|=Rh|x|$, it follows that $Rh|x|\geq x^*x$, which by \cref{commuting order unit}\ref{eq: hashtag} implies $Rh\geq |x|$. Then \cref{commuting order unit}\ref{commuting order unit (b)} tells us $h$ is a local order unit for $X$.

If $e$ is moreover an        order unit with scaling factor $R$, then 
the same argument shows that $h$ is an        order unit with scaling factor at most as large as $R$. A symmetric argument now shows that $R$ is the scaling factor of $h$. 

Finally, it follows from nondegeneracy and the fact that $h_j(e)x=h_j(h)x$ for all $x\in X$ and $j\geq 1$ that the $\Cstar$-algebras associated to $(X,e)$ and $(X,h)$, as guaranteed by \cref{theorem57}, are equal. 
\end{proof}

The previous lemma applies in particular to the case where $\Cstar_{\sbt}(X)$ is unital but does not contain $e$, as in the following corollary.  
\begin{corollary}\label{mult is order unit'} 
Let $B$ be a $\Cstar$-algebra and $X\subset B$ a self-adjoint linear subspace with         order unit 
$e\in B_+^1\cap X'$ such that $X^2\subset eX$, and let $\Cstar_{\sbt}(X)$ denote the associated $\Cstar$-algebra. If $\Cstar_{\sbt}(X)$ is unital but $e\notin X$, then the unit of $\Cstar_{\sbt}(X)$, denoted by $h$, is also 
an         order unit for $X$ 
with the same scaling factor as $e$, and the $\Cstar$-algebra associated to $(X,h)$ is exactly $\Cstar_{\sbt}(X)$. 
\end{corollary}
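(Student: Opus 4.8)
The plan is to recognize this as an immediate application of \cref{mult is order unit}, where the auxiliary element $h$ of that proposition is taken to be the image of the unit of $\Cstar_{\sbt}(X)$ under the map $\Phi$ from \cref{prop 61}. Concretely, let $u$ denote the unit of $\Cstar_{\sbt}(X)$ and let $\Phi\colon \Cstar_{\sbt}(X)\longrightarrow B$ be the c.p.c.\ order zero map of \cref{prop 61}, which restricts to $\id_X$ on $X$; I set $h\coloneqq\Phi(u)$, which is the natural realization in $B$ of ``the unit of $\Cstar_{\sbt}(X)$'' (in the complete case $\Phi=\id_X$ and $u\in X$, so $h=u$). Since $u\in\Cstar_{\sbt}(X)_+^1$ and $\Phi$ is c.p.c., we immediately obtain $h\in B_+^1$, so $h$ is a legitimate input for \cref{mult is order unit}.

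The only genuine verification is the hypothesis $hx=ex$ for all $x\in X$; once this is in hand, every assertion of the corollary --- that $h\in X'$ is an order unit for $X$ with $X^2\subset hX$, that it has the same scaling factor as $e$ (here using that $e$ is assumed to be an \emph{order} unit), and that the $\Cstar$-algebra associated to $(X,h)$ coincides with $\Cstar_{\sbt}(X)$ --- is read off directly from \cref{mult is order unit}. To establish $hx=ex$ I would argue by approximation, transferring the defining property of the unit from the $\|\cdot\|_{\sbt}$-topology to the $\|\cdot\|$-topology via the norm comparison $\|\cdot\|\leq\|\cdot\|_{\sbt}$ of \cref{equivalent norms}. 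Choose $(x_n)_n\subset X$ with $\|x_n-u\|_{\sbt}\to 0$; then $x_n=\Phi(x_n)\to\Phi(u)=h$ in $\|\cdot\|$ because $\Phi$ is contractive. Fixing $x\in X$, joint continuity of multiplication in $\Cstar_{\sbt}(X)$ together with $u\sbt x=x$ gives $x_n\sbt x\to x$ in $\|\cdot\|_{\sbt}$, hence also in $\|\cdot\|$. Applying \eqref{mult id} for each $n$ yields $e(x_n\sbt x)=x_nx$, and passing to the $\|\cdot\|$-limit on both sides --- using $x_n\sbt x\to x$ on the left, $x_n\to h$ on the right, and continuity of multiplication in $B$ --- produces $ex=hx$.

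I expect the main (indeed only) obstacle to be the bookkeeping in this limit argument: one must keep straight which products are the $B$-product and which are the $\sbt$-product, and invoke \cref{equivalent norms} precisely at the two points where a $\|\cdot\|_{\sbt}$-convergence must be downgraded to $\|\cdot\|$-convergence. Everything else is a citation of \cref{mult is order unit}. As a consistency check one can observe that in the complete case $X=\overline{X}^{\|\cdot\|_{\sbt}}$ the identity $hx=ex$ simply records that the abstract unit, viewed inside $B$, multiplies like $e$.
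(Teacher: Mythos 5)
Your proof is correct, and it has the same skeleton as the paper's: both reduce the corollary to \cref{mult is order unit}, so the only point of substance is verifying $hx=ex$ for $x\in X$ (with $h=\Phi(u)$, as you make explicit). Where you differ is in how that identity is obtained. The paper factors $x=y\sbt z$ with $y,z\in\Cstar_{\sbt}(X)$ (possible because $\Cstar_{\sbt}(X)$ is a $\Cstar$-algebra) and computes
\[
hx=\Phi(h)\Phi(y\sbt z)=\Phi(y)\Phi(z)=e\,\Phi(y\sbt z)=ex,
\]
using the order zero characterization \eqref{dager} for the unital domain $\Cstar_{\sbt}(X)$ together with the continuous extension of \eqref{mult id}. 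You instead approximate the abstract unit $u$ by a sequence $(x_n)_n\subset X$ in $\|\cdot\|_{\sbt}$ and pass to the limit in $e(x_n\sbt x)=x_nx$, downgrading $\|\cdot\|_{\sbt}$-convergence to $\|\cdot\|$-convergence via \cref{equivalent norms}. Your route is more elementary --- it uses neither the factorization of elements in a $\Cstar$-algebra nor the order zero property of $\Phi$, only its contractivity and the density of $X$ in $\Cstar_{\sbt}(X)$ --- at the cost of a slightly longer bookkeeping argument; the paper's version is shorter but leans on \cref{prop 61} more heavily. Both are complete.
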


\begin{proof}
     Let $x\in X$. Since $\Cstar_{\sbt}(X)$ is a $\Cstar$-algebra, there exist $y,z\in \Cstar_{\sbt}(X)$ so that $x=y\sbt z$. Then since $\Phi$ is order zero, we have \[hx=\Phi(h)\Phi(y\sbt z)=\Phi(y)\Phi(z)=yz=e(y\sbt z)=ex.\]
      By \cref{mult is order unit} we are done.
\end{proof}

 \begin{proposition}\label{order unit invariant}
 Let $X\subset B$ be a self-adjoint linear subspace of a $\Cstar$-algebra $B$ with local order units $e,h\in B_+^1\cap X\cap X'$ with $hX=X^2=eX$. 
 Let $\sbt \colon X\times X\longrightarrow X$, $\|\cdot\|_{\sbt }$, $\Cstar_{\sbt }(X)$ denote the multiplication, norm, 
 and $\Cstar$-algebra induced by $e$ and $\star \colon X\times X\longrightarrow X$, $\|\cdot\|_{\star }$, and $\Cstar_{\star }(X)$ the multiplication, norm, and $\Cstar$-algebra induced by $h$. If $X$ is complete with respect to both  $\|\cdot\|_{\star }$ and $\|\cdot\|_{\sbt}$, then $\Cstar_{\sbt }(X)$ and $\Cstar_{\star }(X)$ are unitally $^*$-isomorphic. 
 \end{proposition}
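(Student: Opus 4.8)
The plan is to show that $\Cstar_\star(X)$ is an \emph{isotope} of $\Cstar_\sbt(X)$ by the central, positive, invertible element $h$, and then to run the standard conjugation-by-$h^{1/2}$ trick to produce the isomorphism. Write $A\coloneqq\Cstar_\sbt(X)$ and $C\coloneqq\Cstar_\star(X)$. Since $X$ is complete in both $\|\cdot\|_\sbt$ and $\|\cdot\|_\star$, both $A$ and $C$ coincide with $X$ as $^*$-vector spaces, with the common involution inherited from $B$; they differ only in their products (with units $e$ and $h$ respectively, by \cref{theorem57}) and their norms. Because $X$ is $\|\cdot\|_\sbt$-complete, the map $\Phi$ of \cref{prop 61} associated to $e$ equals $\id_X$ and is therefore injective, so the last part of \cref{prop 61} gives $x\geq 0$ in $A$ if and only if $x\geq 0$ in $B$; the same argument applies to $C$. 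In particular $A$ and $C$ share the positive cone they inherit from $B$.

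Next I would record two facts about $h$ viewed inside $A$. First, $h$ is \emph{central} in $A$: since $h\in X'$ we have $hx=xh$ in $B$ for every $x\in X$, and \cref{prop52'}\ref{prop52' it3} then yields $h\sbt x=x\sbt h$ for all $x\in X=A$; consequently every continuous function of $h$ computed in $A$ is central as well. Second, $h$ is \emph{invertible} in $A$: applying the local-order-unit property of $h$ to the self-adjoint element $e\in X$ produces $R>0$ with $Rh\geq e$ in $B$, hence (same positive cone) $Rh\geq e=1_A$ in $A$; thus $h\geq\tfrac1R 1_A>0$, so $\sigma_A(h)$ is bounded away from $0$ and $h$ is invertible. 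Functional calculus in $A$ then provides a central, positive, invertible square root $h^{1/2}\in A$ with $\sbt$-inverse $h^{-1/2}$.

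Finally I would identify $\star$ in terms of $\sbt$ and define the isomorphism. From $h(x\star y)=xy=e(x\sbt y)$, together with centrality of $h^{-1}$ and nondegeneracy of $h$ (\cref{order unit}(i) applied to the pair $(X,h)$), one checks that $h\cdot(x\sbt h^{-1}\sbt y)=e(x\sbt y)=xy$, whence $x\star y=x\sbt h^{-1}\sbt y$ for all $x,y\in X$. Now set $\Psi(x)\coloneqq h^{1/2}\sbt x\sbt h^{1/2}$. Using associativity and centrality of the powers of $h$, a direct computation gives $\Psi(x)\star\Psi(y)=\Psi(x)\sbt h^{-1}\sbt\Psi(y)=h^{1/2}\sbt(x\sbt y)\sbt h^{1/2}=\Psi(x\sbt y)$, while $\Psi(e)=h=1_C$, $\Psi(x^*)=\Psi(x)^*$ (as $h^{1/2}$ is self-adjoint and the involution is common), and $\Psi$ is bijective with inverse $y\mapsto h^{-1/2}\sbt y\sbt h^{-1/2}$. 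Thus $\Psi$ is a unital $^*$-isomorphism of $^*$-algebras from $(A,\sbt)$ onto $(A,\star)=C$; since both are $\Cstar$-algebras, $\Psi$ is automatically isometric, yielding the desired unital $^*$-isomorphism $\Cstar_\sbt(X)\cong\Cstar_\star(X)$.

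I expect the main obstacle to be recognizing the isotope relation $x\star y=x\sbt h^{-1}\sbt y$ and, upstream of it, securing the invertibility of $h$ inside $A$ — which crucially relies on $A$ and $C$ sharing the ambient positive cone, so that the $B$-order inequality $Rh\geq e$ transfers into the order of $A$. Once centrality and invertibility are in hand, verifying that $\Psi=h^{1/2}\sbt(\cdot)\sbt h^{1/2}$ is a unital $^*$-isomorphism is routine.
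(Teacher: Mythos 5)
Your proposal is correct and follows essentially the same route as the paper: both arguments hinge on the shared positive cone (via \cref{prop 61}) to get invertibility of $h$ in $\Cstar_{\sbt}(X)$, then derive the isotope relation $x\star y=h^{-1}\sbt x\sbt y$ by nondegeneracy, and finally implement the isomorphism by multiplication by a central power of $h$. Your map $\Psi(x)=h^{1/2}\sbt x\sbt h^{1/2}=h\sbt x$ is precisely the inverse of the paper's $\varphi(x)=h^{-1}\sbt x$, so the two constructions coincide.
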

 
 \begin{proof}
First, we remark that our assumptions guarantee that $\Cstar_{\sbt }(X)=X=\Cstar_{\star }(X)$ as $^*$-vector spaces, and $e$, resp.\ $h$, is the unit of $\Cstar_{\sbt }(X)$, resp.\ $\Cstar_{\star }(X)$. We also note that $e\in Z(\Cstar_{\star }(X))$ and $h\in Z(\Cstar_{\sbt }(X))$ by \cref{prop52'}(ii). By \cref{prop 61}, for all $x\in X$ \[x\in \Cstar_{\sbt }(X)_+\ \Longleftrightarrow\  x\in B_+\ \Longleftrightarrow\  x\in \Cstar_{\star }(X)_+.\]
So $Rh- e\in \Cstar_{\sbt }(X)_+$ and $Re- h\in \Cstar_{\star }(X)_+$ for some $R> 0$, which means $h\in \mathrm{GL}(\Cstar_{\sbt }(X))$ and $e\in \mathrm{GL}(\Cstar_{\star }(X))$. Let $h^{-1}$ denote the element of $X$ that acts as the inverse of $h$ in $\Cstar_{\sbt }(X)$, i.e., $h\sbt  h^{-1}=h^{-1}\sbt  h = e$, and let $e^{-1}$ likewise denote the inverse of $e$ in $\Cstar_{\star }(X)$. Then for all $x,y\in X$, we have 
\begin{align*}
    h(x\star  y) &= xy\\
    &= e(x\sbt  y)\\ &= e(h \sbt  h^{-1}\sbt x\sbt  y)\\ &= e(h \sbt  (h^{-1}\sbt x\sbt  y))\\ &= h(h^{-1}\sbt x\sbt  y).
\end{align*}
Nondegeneracy of $h$ implies that 
\begin{align}\label{prod1}
x\star  y = h^{-1}\sbt x\sbt  y
\end{align}
for all $x,y\in X$, and likewise, we have 
\begin{align}\label{prod2}
x\sbt  y= e^{-1}\star x\star  y
\end{align}
for all $x,y\in X$. 
In particular, we have for all $x,y\in X$ 
\begin{align}
    h &= e\star  e^{-1} = h^{-1}\sbt e\sbt  e^{-1} = h^{-1}\sbt e^{-1},\ \mathrm{ and } \label{inverses1}\\
    e &= h\sbt  h^{-1} = e^{-1}\star h\star  h^{-1}  = e^{-1}\star h^{-1}. \label{inverses2}
\end{align}
Now define $\varphi\colon \Cstar_{\star }(X)\longrightarrow \Cstar_{\sbt }(X)$ by $\varphi(x)=h^{-1}\sbt x$ for all $x\in X$. Then $\varphi$ is a unital $^*$-homomorphism. Indeed, $\varphi(h)=h^{-1}\sbt h= e$, and for $x,y\in X$, we have 
\begin{align*}
    \varphi(x\star  y)= h^{-1}\sbt (x\star  y)\overset{\eqref{prod1}}{=} h^{-1}\sbt (h^{-1}\sbt x\sbt  y) = (h^{-1}\sbt x)\sbt  (h^{-1}\sbt y).
\end{align*} 
Likewise, $\psi\colon \Cstar_{\sbt }(X)\longrightarrow \Cstar_{\star }(X)$ given by $\psi(x)= e^{-1}\star  x $ is a unital $^*$-homomorphism. Moreover, 
we have for any $x\in X$, 
\begin{align*}
    \psi(\varphi(x))&=e^{-1}\star (h^{-1}\sbt x)\overset{\eqref{prod2}}{=}e^{-1}\star (e^{-1}\star h^{-1}\star  x) \overset{\eqref{inverses2}}{=} e^{-1}\star (e\star x) = x.
\end{align*}
Likewise using \eqref{inverses1}, we have $\varphi\circ\psi(x)=x$. 
So $\psi=\varphi^{-1}$. 
 \end{proof}

\begin{example}
    Let $A,B,$ and $C$ be $\Cstar$-algebras with $A$ and $C$ unital. Suppose we have c.p.\ order zero maps $\varphi\colon A \longrightarrow B$ and $\psi\colon C \longrightarrow B$ with $\varphi(A)=\psi(C)$. Then $X=\varphi(A)=\psi(C)$ and $e=\varphi(1_A)$ and $h=\psi(1_C)$ satisfy the assumptions of \cref{order unit invariant}, and so, with the notation above,  $\Cstar_{\sbt}(\varphi(A))\cong \Cstar_{\star}(\psi(C))$. Note that it now follows from \cref{X=Xbullet} that this $\Cstar$-algebra is a quotient of both $A$ and $C$. 
\end{example}


\section{Closed subspaces and order units}\label{closure}

\noindent The theory is better behaved when our self-adjoint linear subspace $X\subset B$ is assumed to be closed and also when the  commuting local order unit for $X$ is assumed to be an        order unit. It turns out these two notions are deeply intertwined.

\begin{proposition}\label{order unit for closure}
Suppose $X\subset B$ is a self-adjoint linear subspace of a $\Cstar$-algebra with         order unit 
$e\in B_+^1\cap X'$ with scaling factor $R$. 
Writing $\overline{X}$ for the closure of $X$ with respect to $\|\cdot\|$, we have the following. 
\begin{enumerate}[label=\textnormal{(\roman*)}]
\item \label{it1:order unit for closure} $e$ is an        order unit for $\overline{X}$ with scaling factor $R$.
    \item \label{it2:order unit for closure}  
    $R\|ex\|\geq \|x\|$ for each self-adjoint $x\in X$. In particular, multiplication by $e$ is bounded below on $\overline{X}$, and $\overline{eX}=e\overline{X}$.
    \item\label{it3:order unit for closure}   
    If $X^2\subset \overline{eX}$, then 
    $(\overline{X}, e)$ satisfies the assumptions of Theorem \ref{theorem57}, where the induced norm and product agree with those on $X$, and we have  $\overline{X}=\overline{X}^{\|\cdot\|_{\sbt}}$ and $\Cstar_{\sbt}(X)=\Cstar_{\sbt}(\overline{X})$. 
\end{enumerate}
 
\end{proposition}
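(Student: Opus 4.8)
The plan is to handle the three parts in sequence, with (ii) carrying the analytic content and (i) and (iii) being assembly. For (i), I would push the defining inequality across the closure. A self-adjoint element of $\overline{X}$ is a $\|\cdot\|$-limit of self-adjoint $x_n\in X$ (the involution is isometric), and for each $n$ we have $R\|x_n\|e-x_n\geq 0$; since $B_+$ is closed and $\|x_n\|\to\|x\|$, these inequalities pass to the limit to give $R\|x\|e\geq x$. Thus $e$ is an order unit for $\overline{X}$ with scaling factor at most $R$, while $X\subseteq\overline{X}$ forces the scaling factor of $\overline{X}$ to be at least that of $X$, so it equals $R$.

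For (ii), the crux is the estimate $R\|ex\|\geq\|x\|$ for self-adjoint $x\in X$, which I would obtain by commutative functional calculus. Since $e\in X'$, identify $\Cstar(e,x)\cong C_0(\Omega)$ with $\Omega\subseteq\sigma(e)\times\sigma(x)$, with $e$ corresponding to $(s,t)\mapsto s$ and $x$ to $(s,t)\mapsto t$. Assuming $x\neq 0$, applying the order-unit inequality to $\pm x$ gives $R\|x\|s\geq|t|$ on $\Omega$, so any point with $|t|$ close to $\|x\|=\sup_\Omega|t|$ satisfies $s\geq |t|/(R\|x\|)$, whence $\|ex\|=\sup_\Omega|st|\geq\|x\|/R$. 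To get that multiplication by $e$ is bounded below on all of $X$ (hence, by continuity, on $\overline{X}$), I would decompose a general $x\in X$ via $(x+x^*)/2$ and $(x-x^*)/2\mathrm{i}$; using $\|ex^*\|=\|(ex)^*\|=\|ex\|$ one bounds the norm of each self-adjoint part by $R\|ex\|$, giving $\|x\|\leq 2R\|ex\|$.

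The equality $\overline{eX}=e\overline{X}$ then splits into two inclusions: $e\overline{X}\subseteq\overline{eX}$ is just continuity of left multiplication by $e$, and for the reverse, if $ex_n\to y$ then boundedness below makes $(x_n)$ Cauchy, so $x_n\to x\in\overline{X}$ and $y=ex\in e\overline{X}$. For (iii), I would verify the hypotheses of \cref{theorem57} for $(\overline{X},e)$: $e\in\overline{X}'$ since commutation is preserved under $\|\cdot\|$-limits, $e$ is a local order unit for $\overline{X}$ by (i), and $\overline{X}^2\subseteq e\overline{X}$ because for $x,y\in\overline{X}$ approximated by $x_n,y_n\in X$ we have $x_ny_n\in X^2\subseteq\overline{eX}=e\overline{X}$, and $e\overline{X}=\overline{eX}$ is closed, so the limit $xy$ remains in $e\overline{X}$. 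Agreement of the products on $X$ is immediate from \cref{order unit}(i) (nondegeneracy) applied to $xy=e(x\sbt y)$, and the norms agree because the defining formula \eqref{bullet norm} for $\|\cdot\|_{\sbt}$ depends only on $e$ and the element in question.

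Finally, since $\overline{X}$ is $\|\cdot\|$-closed, \cref{equivalent norms} yields $\|\cdot\|\leq\|\cdot\|_{\sbt}\leq R'\|\cdot\|$ on $\overline{X}$, so the two norms are equivalent there; as $\overline{X}$ is $\|\cdot\|$-complete it is $\|\cdot\|_{\sbt}$-complete, i.e. $\overline{X}=\overline{X}^{\|\cdot\|_{\sbt}}$, and since $X$ is then $\|\cdot\|_{\sbt}$-dense in $\overline{X}$ we conclude $\Cstar_{\sbt}(X)=\overline{X}=\Cstar_{\sbt}(\overline{X})$. The main obstacle I anticipate is the functional-calculus estimate in (ii); a secondary point requiring care is ensuring that $e\overline{X}$ is genuinely closed (which is exactly the content $e\overline{X}=\overline{eX}$ from (ii)), since without this one would only get $\overline{X}^2$ inside the \emph{closure} of $e\overline{X}$ rather than $e\overline{X}$ itself.
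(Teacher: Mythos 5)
Your proposal is correct and follows essentially the same route as the paper: limits of positive elements for (i), a commutative functional-calculus estimate giving $R\|ex\|\geq\|x\|$ on self-adjoint elements plus the real/imaginary decomposition for (ii), and the chain $(\overline{X})^2\subset\overline{X^2}\subset\overline{eX}=e\overline{X}$ together with nondegeneracy and \cref{equivalent norms} for (iii). The only cosmetic difference is that you prove the key self-adjoint estimate by a direct supremum computation on the joint spectrum $\Omega$, whereas the paper routes it through \cref{commuting order unit}(i) via $R\|x\|e|x|\geq x^2$; the two arguments are interchangeable.
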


\begin{proof}
(i) Suppose $x\in \overline{X}$ is self-adjoint and $(x_n)_n$  is a sequence of self-adjoint elements in $X$ with $\lim_n\|x_n-x\|=0$.
Then $R\|x_n\|e+x_n\geq 0$ for all $n\in \N$, 
which implies $R\|x\|e+x\geq 0$ since $B_+$ is closed. Since $X\subset \overline{X}$, the scaling factor for $e$ with respect to $\overline{X}$ is at least $R$, and so it is $R$.  

(ii) 
Fix $x\in X$, and assume for now that $x$ is self-adjoint. 
Since $e\in X'$, $R\|x\|e\geq x$ implies $R\|x\|e|x|\geq x^2$ by \cref{commuting order unit}\ref{eq: hashtag}, 
and so 
$$R\|x\|\|ex\|=
R\|x\|\|e|x|\|\geq \|x^2\|=\|x\|^2.$$
Now, for general $x\in X$, we have 
\begin{align}\label{for plain x}
    2\|x\|&\leq \|x+x^*\|+\|i(x-x^*)\|\\
    &\leq R\|e(x+x^*)\|+R\|e(i(x-x^*))\|\nonumber\\
    &\leq 4R\|ex\|. \nonumber
\end{align}
It follows that multiplication by $e$ is bounded below on $\overline{X}$, and hence $e\overline{X}=\overline{eX}$. 

(iii) If $X^2\subset \overline{eX}$, then by (ii), we have $(\overline{X})^2\subset \overline{X^2}\subset \overline{eX}=e\overline{X}$, 
and so $(\overline{X}, e)$ satisfies the assumptions of \cref{theorem57}. 
Then we have an associative bilinear map $\overline{X}\times \overline{X}\longrightarrow \overline{X}$ satisfying \eqref{mult id}, which nondegeneracy tells us agrees with ${\sbt}\colon X\times X\longrightarrow X$ on $X\times X$. We also have a $\Cstar$-norm on $\overline{X}$ given by \eqref{bullet norm}, which agrees by definition with $\|\cdot\|_{\sbt}\colon X\longrightarrow [0,\infty)$ on $X$.  
Hence, we can denote the induced multiplication and norm on $\overline{X}$ by ${\sbt}$ and $\|\cdot\|_{\sbt}$, and 
we have a natural inclusion $\Cstar_{\sbt}(X)\subset \Cstar_{\sbt}(\overline{X})$ coming from $(X,{\sbt})\subset (\overline{X},{\sbt})$. 
Since $\overline{X}$ is closed, it then follows from \cref{equivalent norms} that $\|\cdot\|$ and $\|\cdot\|_{\sbt}$ are equivalent norms on $X$, and so 
$\overline{X}=\Cstar_{\sbt}(\overline{X})=\Cstar_{\sbt}(X)$. 
\end{proof}

\begin{remark}
The weaker assumption $X^2\subset \overline{eX}$ may seem out of place, but it will be useful in upcoming work and only requires a minimal adjustment here.\end{remark}

Now we are equipped to explicitly describe the relationship between closed self-adjoint linear subspaces and         order units. 

\begin{corollary}\label{closed uniform order unit}
Let $B$ be a $\Cstar$-algebra and $X\subset B$ a self-adjoint linear subspace with local order unit $e\in B_+^1\cap X'$ such that $X^2\subset eX$. Then $X$ is closed with respect to $\|\cdot\|$ if and only if $e$ is an        order unit and $X$ is closed with respect to $\|\cdot\|_{\sbt}$. 
In particular, if $A$ is a unital $\Cstar$-algebra and $\theta\colon A\longrightarrow B$ a c.p.c.\ order zero map, then $\theta(A)$ is closed if and only if $\theta(1_A)$ is an        order unit for $\theta(A)$. 
\end{corollary}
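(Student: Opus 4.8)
The plan is to prove the main equivalence in both directions and then read off the ``in particular'' clause. The reverse implication is the quicker one: assuming $e$ is an order unit and $X$ is $\|\cdot\|_{\sbt}$-closed, I would invoke \cref{order unit for closure}. Since $X^2\subset eX\subset\overline{eX}$, part (iii) of that proposition applies and yields $\Cstar_{\sbt}(X)=\Cstar_{\sbt}(\overline{X})$, where $\overline{X}$ denotes the $\|\cdot\|$-closure. As $X$ is complete in $\|\cdot\|_{\sbt}$ we have $\Cstar_{\sbt}(X)=X$, while $\overline{X}$ sits densely inside $\Cstar_{\sbt}(\overline{X})$; chaining these gives $X=\Cstar_{\sbt}(X)=\Cstar_{\sbt}(\overline{X})\supseteq\overline{X}\supseteq X$, forcing $X=\overline{X}$, so $X$ is $\|\cdot\|$-closed.

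For the forward implication, suppose $X$ is $\|\cdot\|$-closed. First, \cref{equivalent norms} supplies a constant $R\geq 1$ with $\|x\|\leq\|x\|_{\sbt}\leq R\|x\|$ for all $x\in X$; in particular the two norms are equivalent and $X$ is $\|\cdot\|_{\sbt}$-closed, which is one half of what is wanted. The remaining task is to upgrade the local order unit $e$ to an order unit, and here I would pass to the unitization $Z\coloneqq\Span\{X,e\}$ of \cref{theorem57 span}. Since $X$ is $\|\cdot\|_{\sbt}$-closed, so is $Z$ by \cref{rem: unitization}(ii), whence $Z=\Cstar_{\sbt}(Z)$ as sets and $\Phi^\dagger=\id_Z\colon\Cstar_{\sbt}(Z)\longrightarrow B$ is an injective c.p.c.\ order zero map with $e$ as the unit of $\Cstar_{\sbt}(Z)$. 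By injectivity and \cref{cor: order zero for unital}(iii), the positive cone of $\Cstar_{\sbt}(Z)$ agrees with that of $B$ on $Z$. For self-adjoint $x\in X$, the inequality $\|x\|_{\sbt}e\geq x$ holds in $\Cstar_{\sbt}(Z)$ because the norm times the unit dominates any self-adjoint element, and this then descends to $B$; combining with $\|x\|_{\sbt}\leq R\|x\|$ and $e\geq 0$ gives $R\|x\|e\geq x$ in $B$. Hence $e$ is an order unit for $X$ with scaling factor at most $R$.

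Finally, for the ``in particular'' statement I would set $X=\theta(A)$ and $e=\theta(1_A)$. By \cref{prop52.5}(ii), the pair $(X,e)$ satisfies the hypotheses of the corollary with $X^2=eX$, and by \cref{X=Xbullet} the image $\theta(A)$ is automatically $\|\cdot\|_{\sbt}$-closed. Consequently the second condition in the main equivalence is free, and the equivalence collapses to the assertion that $\theta(A)$ is $\|\cdot\|$-closed if and only if $\theta(1_A)$ is an order unit for $\theta(A)$.

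I expect the main obstacle to be the order-unit upgrade in the forward direction, precisely because $e$ need not lie in $X$, so the clean domination $\|x\|_{\sbt}e\geq x$ is only available in the unital $\Cstar$-algebra $\Cstar_{\sbt}(Z)$ rather than in $X$ itself. The crucial leverage is that, once $\Phi^\dagger$ is injective, \cref{prop 61} together with \cref{cor: order zero for unital}(iii) transports positivity faithfully between $\Cstar_{\sbt}(Z)$ and $B$, so the order relation witnessed by the abstract unit $e$ becomes a genuine order relation in $B$; the norm equivalence from \cref{equivalent norms} then converts the $\|\cdot\|_{\sbt}$-scaling into the required $\|\cdot\|$-scaling.
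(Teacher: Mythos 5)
Your proposal is correct and follows essentially the same route as the paper: the reverse direction via \cref{order unit for closure}(iii), the forward direction via the norm equivalence of \cref{equivalent norms} and the unitization $Z=\Span\{X,e\}$ with unit $e$ in $\Cstar_{\sbt}(Z)$ (your direct computation $R\|x\|e\geq\|x\|_{\sbt}e\geq x$ is exactly the content of Example \ref{ex: bdd below UOU}(ii), which the paper cites instead), and the ``in particular'' clause via \cref{X=Xbullet}.
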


\begin{proof} 
Set $Z=\Span\{X,e\}$. If $X$ is closed with respect to $\|\cdot\|$, then by footnote \ref{B-space closures1} so is $Z$. Then it follows from \cref{equivalent norms} that 
$Z$ is also closed with respect to $\|\cdot\|_{\sbt}$, and  
from \cref{prop 61} that $\Phi^\dagger\colon \Cstar_{\sbt}(Z)\longrightarrow Z$ is bounded below with $Z=\Phi^\dagger(\Cstar_{\sbt}(Z))$. Then as we saw in Example \ref{ex: bdd below UOU}(ii), $e$ is an        order unit for $Z$ in $B$ and hence also for $X$. 
Conversely, if $e$ is an        order unit for $X$ and $X=\overline{X}^{\|\cdot\|_{\sbt}}$, then since $X^2\subset eX\subset \overline{eX}$, it follows from \cref{order unit for closure}(iii) that $\overline{X}^{\|\cdot\|}=\overline{X}^{\|\cdot\|_{\sbt}}=X$. 

When $X=\theta(A)$, \cref{X=Xbullet} says that $X$ is automatically closed with respect to $\|\cdot\|_{\sbt}$. Hence $X$ is closed if and only if $e$ is an order unit. 
\end{proof}

Applying \cref{invertible c.p. order zero and order scale} 
we can now characterize the 
scaling factor for our commuting         order unit 
$e$ as $\|\Phi^{-1}\|$.

\begin{corollary}\label{completely isometric}
Let $B$ be a $\Cstar$-algebra and $X\subset B$ a closed self-adjoint linear subspace with local order unit $e\in B_+^1\cap X'$ such that $X^2\subset eX$. Then $\Phi\colon \Cstar_{\sbt}(X)\longrightarrow B$ is bounded below and $e$ is an        order unit  
with scaling factor $\|\Phi^{-1}\|=\sup_j\|H_j\|$, where $H_j\in \mathcal{L}(X,B)$ are defined as in \eqref{Hj}. 
When the scaling factor of $e$ is $1$, $\Phi$ is a complete order embedding and 
 $\|ex\|=\|x\|$ for all $x\in X$. 
\end{corollary}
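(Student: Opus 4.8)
The plan is to read off almost everything from results already in hand, reserving the only genuinely new computation for the sharp norm identity in the scaling-factor-one case.

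First I would dispose of the structural assertions. Since $X$ is closed in $\|\cdot\|$, \cref{prop 61} gives that $\Phi=\id_X$ and that $\Phi$ is bounded below, while \cref{closed uniform order unit} gives that $e$ is an        order unit for $X$. Because $\Phi=\id_X$, its inverse is $\id_X\colon(X,\|\cdot\|)\longrightarrow(X,\|\cdot\|_{\sbt})$, so $\|\Phi^{-1}\|=\sup_{x\neq 0}\|x\|_{\sbt}/\|x\|$. Substituting $\|x\|_{\sbt}=\sup_j\|H_j(x)\|$ from \cref{norm} and interchanging the two suprema yields $\|\Phi^{-1}\|=\sup_j\|H_j\|$ at once.

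Next I would identify the scaling factor of $e$ with $\|\Phi^{-1}\|$ by feeding the injective (because bounded below) c.p.c.\ order zero map $\Phi$ into \cref{invertible c.p. order zero and order scale}. If $\Cstar_{\sbt}(X)$ is nonunital, then by \cref{theorem57 span} together with \cref{rem: unitization}(i) the map $\Phi$ extends to a c.p.c.\ order zero map on $\Cstar_{\sbt}(X)^{\sim}=\Cstar_{\sbt}(Z)$ sending the adjoined unit to $e$, and \cref{invertible c.p. order zero and order scale}(ii) says precisely that $e$ is an        order unit for $X$ with scaling factor $\|\Phi^{-1}\|$. If instead $\Cstar_{\sbt}(X)$ is unital with unit $h$, the remark following \cref{invertible c.p. order zero and order scale} gives that $h=\Phi(1)$ is an        order unit for $X$ with scaling factor $\|\Phi^{-1}\|$; since $hx=e(h\sbt x)=ex$ by \eqref{mult id} for every $x\in X$, \cref{mult is order unit} (and \cref{mult is order unit'}) shows $e$ and $h$ have the same scaling factor, so $e$ again has scaling factor $\|\Phi^{-1}\|$.

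Finally, suppose the scaling factor is $1$, i.e.\ $\|\Phi^{-1}\|=1$. Then for each $x$ we get $\|x\|=\|\Phi^{-1}\Phi(x)\|\leq\|\Phi(x)\|\leq\|x\|$, so $\Phi$ is isometric and hence, being c.p.\ order zero, a complete order embedding by \cref{a: isometric unitization}. For the identity $\|ex\|=\|x\|$ I would first reduce to positive elements: since $e\in X'\cap B_+$ commutes with $x$ one has $\|ex\|=\|e|x|\|\leq\||x|\|=\|x\|$. For the reverse inequality I would use that $\Phi$ isometric means $\|x\|_{\sbt}=\|x\|$ for all $x\in X$; computing $\|x\|_{\sbt}=\sup_j\|h_j(e)|x|\|$ in $\Cstar(e,|x|)\cong C_0(\Omega)$ shows this equals $\sup\{t/s:(s,t)\in\Omega,\ s>0\}$, so $\|x\|_{\sbt}=\|x\|=\sup_\Omega t$ forces $t\leq\|x\|s$ throughout $\Omega$, i.e.\ $|x|\leq\|x\|e$. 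Multiplying this commuting inequality by $|x|\geq 0$ gives $|x|^2\leq\|x\|e|x|$, whence $\|x\|^2\leq\|x\|\,\|e|x|\|$ and therefore $\|ex\|=\|e|x|\|\geq\|x\|$, as needed.

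The step I expect to be the main obstacle is exactly this last identity for non-self-adjoint $x$: the        order unit inequality only controls self-adjoint elements of $X$, while $|x|$ lives in $\Cstar(X)$ rather than $X$, so the naive route through $x+x^*$ and $i(x-x^*)$ loses a factor of two (as in \eqref{for plain x}). Routing through the equality $\|x\|_{\sbt}=\|x\|$ and the spectral picture of $\Cstar(e,|x|)$ to recover the sharp bound $|x|\leq\|x\|e$ is the one place where scaling factor exactly $1$, rather than merely bounded, is essential; the accompanying unital/nonunital split in matching the scaling factor of $e$ to $\|\Phi^{-1}\|$ is pure bookkeeping handled by the unitization results of \cref{Z}.
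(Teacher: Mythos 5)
Your proposal is correct, and for everything up to the last identity it follows the paper's own route: closedness plus \cref{prop 61} and \cref{closed uniform order unit} for boundedness below and the order-unit property, the identity $\|\Phi^{-1}\|=\sup_j\|H_j\|$ via \eqref{Hj'} and an interchange of suprema, the unital/non-unital case split feeding $\Phi^{\dagger}=\id_Z$ into \cref{invertible c.p. order zero and order scale}(ii) (with \cref{mult is order unit'} handling the case $e\notin\Cstar_{\sbt}(X)$), and \cref{a: isometric unitization} for the complete order embedding. Where you genuinely diverge is the final identity $\|ex\|=\|x\|$: the paper proves it by a purely $\Cstar$-algebraic chain, $\|x\|^4=\|x^*xx^*x\|\leq\|x^*\Phi(x\sbt x^*)x\|\leq\|x^*\Phi^{\sim}(\|x\|_{\sbt}^2e)x\|\leq\|x\|\,\|x\|_{\sbt}^2\,\|ex\|$, using that $x\sbt x^*\leq\|x\|_{\sbt}^2e$ in $\Cstar_{\sbt}(Z)$, whereas you work in $\Cstar(e,|x|)\cong C_0(\Omega)$, read off $\|x\|_{\sbt}=\sup\{t/s\}$ from the functional calculus, deduce the sharp pointwise bound $|x|\leq\|x\|e$ from $\|x\|_{\sbt}=\|x\|$, and then multiply by $|x|$ via \cref{commuting order unit}(i). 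Both arguments are valid; yours has the advantage of isolating the stronger intermediate statement $|x|\leq\|x\|e$ for \emph{all} $x\in X$ (not just self-adjoint $x$, which is exactly the factor-of-two obstruction you correctly identify in \eqref{for plain x}), while the paper's version avoids any appeal to the spectral picture and stays entirely inside the order-zero formalism. Your reading of $\sup_j\sup_{\Omega}h_j(s)t=\sup_{\Omega,\,s>0}t/s$ is sound because points of $\Omega$ with $s=0$ force $t=0$ by the local order unit property, so no mass is lost at the boundary.
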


\begin{proof}
We know from \cref{closed uniform order unit} that $e$ is an        order unit for $X$. 
Set $Z=\text{span}\{X,e\}$. 
If $e\in \Cstar_{\sbt}(X)$ or $\Cstar_{\sbt}(X)$ is non-unital, then by \cref{theorem57 span}, $\Cstar_{\sbt}(X)^\sim=\Cstar_{\sbt}(Z)$. Since $X$ is closed in $\|\cdot \|$, so is $Z$,\footnote{See footnote \ref{B-space closures1}.} and so  $\id_Z$ is a bounded below c.p.\ order zero extension of $\Phi$ to $\Cstar_{\sbt}(X)^\sim$ (by \cref{prop 61}). Then 
\cref{invertible c.p. order zero and order scale}(ii) 
tells us that the 
scaling factor of $e$ for $X$ is  
$\|\Phi^{-1}\|$. 
If $\Cstar_{\sbt}(X)$ is unital with $e\notin\Cstar_{\sbt}(X)$, then \cref{invertible c.p. order zero and order scale}(ii) tells us $\Phi(1_{\Cstar_{\sbt}(X)})$ is an        order unit with scaling factor $\|\Phi^{-1}\|$, and \cref{mult is order unit'}  tells us this is then also the scaling factor for $e$. 
Furthermore, it follows from \eqref{Hj'} that 
for all $x\in X$ with $\|x\|=1$ and all $j\geq 1$ 
\[\|H_j(x)\|\leq \|x\|_{\sbt} =\sup_j\|h_j(e)x\|=\sup_j\|H_j(x)\|\leq \sup_j\|H_j\|<\infty,\]
(where $H_j$ are as defined in \eqref{Hj}), and so for all $x\in X$ 
\[\|H_j(x)\|\leq \|\Phi^{-1}(x)\|_{\sbt} \leq \sup_j\|H_j\|.\] 

Hence $\|H_j\|\leq \|\Phi^{-1}\|\leq \sup_j\|H_j\|$ for all $j\geq 1$, and so $\|\Phi^{-1}\|=\sup_j\|H_j\|$.

In particular, if the scaling factor of $e$ is $1$, then $\Phi$ is an isometry (since $\Phi$ was already contractive). 
Since $\Phi$ is c.p.\ order zero, \cref{a: isometric unitization} tells us it is a complete order embedding. 
Moreover for any $x\in X\backslash\{0\}$, 
\begin{align*}
    \|x\|^4&=\|x^*xx^*x\|\\
    &\leq \|x^*\Phi(x\sbt x^*)x\|\\
    &\leq \|x^*\Phi^\sim(\|x\|_{\sbt}^2 e)x\| \\
   & \leq 
    \|x\|\|x\|_{\sbt}^2\|ex\|.
\end{align*}
Since $\|x\|=\|x\|_{\sbt}$, it follows that $\|ex\|=\|x\|$ for all $x\in X$. 
\end{proof}

Now combining \cref{prop 61}, \cref{closed uniform order unit}, and \cref{completely isometric}, we can finish the proof of \cref{theorem B}:

\begin{corollary}\label{corollary E.5}
Let $B$ be a $\Cstar$-algebra and $X\subset B$ a self-adjoint linear subspace. Then $X$ is completely order isomorphic to a unital $\Cstar$-algebra via a c.p.\ order zero map if and only if $X$ is closed in $B$ and there exists an        order unit $e\in B_+^1\cap X\cap X'$ for $X$ with scaling factor $1$ so that $X^2=eX$. 
\end{corollary}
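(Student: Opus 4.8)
The plan is to prove the biconditional by assembling the machinery of the previous sections, using the (commuting) order unit $e$ as the bridge between the intrinsic $\Cstar$-structure $\Cstar_{\sbt}(X)$ and the ambient algebra $B$, and matching the metric hypothesis ``scaling factor $1$'' to the order-theoretic conclusion ``complete order embedding.''

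For the ``only if'' direction I would begin with a c.p.\ order zero map $\theta\colon A\longrightarrow B$ from a unital $\Cstar$-algebra that restricts to a complete order isomorphism onto $X=\theta(A)$, and set $e\coloneqq\theta(1_A)$. Since a complete order isomorphism is completely isometric, $\theta$ is in particular isometric and bounded below, so $X$ has closed range in $B$; moreover $\|\theta(1_A)\|=\|1_A\|=1$ gives $e\in B_+^1$, and $e\in\theta(A)=X$ by construction. From Example~\ref{Order units in a unital A}(iv) and the structure theorem \cref{thm: structure thm} (cf.\ Example~\ref{prop52.5}(ii)) it follows that $e\in X\cap X'$ is a commuting local order unit with $X^2=eX$. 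To pin down the scaling factor I would invoke \cref{invertible c.p. order zero and order scale}, which identifies the scaling factor of $\theta(1_A)$ with $\|\theta^{-1}\|$; since $\theta$ is isometric, its inverse is isometric as well, so $\|\theta^{-1}\|=1$ and $e$ has scaling factor $1$.

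For the ``if'' direction, given a closed $X$ and an order unit $e\in B_+^1\cap X\cap X'$ of scaling factor $1$ with $X^2=eX$, I would feed $(X,e)$ into \cref{theorem57} to produce the associated $\Cstar$-algebra $\Cstar_{\sbt}(X)$, which is unital with unit $e$ precisely because $e\in X$. Since $X$ is closed in $\|\cdot\|$, \cref{equivalent norms} shows it is already complete in $\|\cdot\|_{\sbt}$, so $\Cstar_{\sbt}(X)=X$ as $^*$-vector spaces and, by \cref{prop 61}, $\Phi\coloneqq\id_X\colon\Cstar_{\sbt}(X)\longrightarrow B$ is c.p.c.\ order zero with image $X$. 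Finally \cref{completely isometric} upgrades the scaling-factor-$1$ hypothesis to the statement that $\Phi$ is a complete order embedding, hence a complete order isomorphism onto $X$, exhibiting $X$ as completely order isomorphic to the unital $\Cstar$-algebra $\Cstar_{\sbt}(X)$ via the c.p.\ order zero map $\Phi$.

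The main obstacle---though it is largely discharged by the cited results---is the precise correspondence between the metric hypothesis (scaling factor exactly $1$) and the order conclusion (complete isometry). This rests on the chain: scaling factor $1$ forces $\Phi$ to be isometric, which, by \cref{a: isometric unitization} and because the domain is a $\Cstar$-algebra and $\Phi$ is order zero, forces $\Phi$ to be a complete order embedding. Here the order zero hypothesis is essential, since \cref{Bestellung} shows that for general c.p.\ maps isometry with c.p.\ inverse does not force complete isometry. I would also verify the bookkeeping that $X$ is closed in $B$ exactly when $e$ is an order unit and $X$ is $\|\cdot\|_{\sbt}$-complete, which is precisely \cref{closed uniform order unit}, ensuring the two sides of the equivalence carve out the same hypotheses.
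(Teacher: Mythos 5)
Your proposal is correct and follows essentially the same route as the paper: the forward direction via the structure theorem (Example \ref{prop52.5}(ii)) together with \cref{invertible c.p. order zero and order scale}(ii) to get scaling factor $1$, and the converse by assembling \cref{theorem57}, \cref{prop 61}, and \cref{completely isometric}. The additional remarks on why the order zero hypothesis is needed and on \cref{closed uniform order unit} are accurate but not required for the argument.
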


\begin{proof}
Suppose $A$ is a unital $\Cstar$-algebra and $\theta\colon A\longrightarrow  B$ is a completely isometric c.p.\ order zero map with $\theta(A)=X$. Set $e\coloneqq \theta(1_A)$. Then $e\in B_+^1\cap X\cap X'$ is an        order unit for $X$ with scaling factor $1$ such that  $X^2=eX$ by the structure theorem for order zero maps (see \cref{prop52.5}) and \cref{invertible c.p. order zero and order scale}(ii).

On the other hand, if $X$ is closed in $B$ and there exists an        order unit $e\in B_+^1\cap X\cap X'$ for $X$ with scaling factor $1$ so that $X^2=eX$, then by \cref{theorem57}, \cref{prop 61},  and \cref{completely isometric}, $\Phi\coloneqq \id_{X}\colon \Cstar_{\sbt}(X)\longrightarrow  B$ is a complete order embedding from the unital $\Cstar$-algebra $\Cstar_{\sbt}(X)$ with image $X$. 
\end{proof}

When $X$ is closed, we can detect an approximate identity from $\Cstar_{\sbt}(X)$ by how it behaves in $B$ (this essentially underlies \cite[Corollary 2.11]{CW1}). 

\begin{proposition}\label{approx unit}
Let $X\subset B$ be a closed self-adjoint linear subspace with        order unit $e\in B_+^1\cap X'$ such that $X^2\subset eX$, and let  $\Cstar_{\sbt}(X)$ be the associated $\Cstar$-algebra and $C\subset \Cstar_{\sbt}(X)$ a sub-$\Cstar$-algebra. An increasing net $(u_\lambda)_\Lambda\subset C^1_+$ is an approximate unit for $C$ if and only if $\lim_\lambda\|u_\lambda y-ey\|= 0$ for all $y\in C$. 
\end{proposition}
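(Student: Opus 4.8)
The plan is to translate the condition on the ambient $B$-multiplication into the intrinsic $\Cstar_{\sbt}(X)$-condition that defines an approximate unit, using the multiplication identity \eqref{mult id} as a bridge and the fact that left multiplication by $e$ is bounded below.

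First I would record the structural consequences of $X$ being closed. By \cref{equivalent norms} the norms $\|\cdot\|$ and $\|\cdot\|_{\sbt}$ are equivalent on $X$ and $\Cstar_{\sbt}(X)=X$ as $^*$-vector spaces; by \cref{prop 61} and \cref{completely isometric} the map $\Phi=\id_X\colon\Cstar_{\sbt}(X)\longrightarrow B$ is a bounded below c.p.c.\ order zero map and $e$ is an order unit. I would also note that, since $(u_\lambda)_\Lambda$ is an increasing net of positive contractions in the $\Cstar$-algebra $C$, it is an approximate unit for $C$ if and only if $\lim_\lambda\|u_\lambda\sbt y-y\|_{\sbt}=0$ for every $y\in C$, the two-sided condition following from the one-sided one by taking adjoints (as $C$ is $^*$-closed and each $u_\lambda$ is self-adjoint).

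The key step is the bridge identity. For $u_\lambda,y\in C\subset X$, the defining relation \eqref{mult id} gives $u_\lambda y=e(u_\lambda\sbt y)$ in $B$, so that, writing $w_\lambda\coloneqq u_\lambda\sbt y-y\in X$, we have $u_\lambda y-ey=e\,w_\lambda$ by bilinearity of the $B$-product. Thus the quantity $\|u_\lambda y-ey\|$ appearing in the statement is exactly $\|e\,w_\lambda\|$, and the whole proposition reduces to showing that $\|e\,w_\lambda\|\longrightarrow 0$ if and only if $\|w_\lambda\|_{\sbt}\longrightarrow 0$.

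Finally I would establish that $\|ew\|$ and $\|w\|_{\sbt}$ are comparable for every $w\in X$. On one side $\|ew\|\le\|w\|\le\|w\|_{\sbt}$, since $\|e\|\le 1$ and by \cref{equivalent norms}; on the other side \cref{order unit for closure}(ii) says multiplication by $e$ is bounded below on $\overline{X}=X$, which combined with the norm equivalence of \cref{equivalent norms} yields a constant $K$ with $\|w\|_{\sbt}\le K\|ew\|$. Hence $\|ew\|$ and $\|w\|_{\sbt}$ define the same convergence, and applying this to $w=w_\lambda$ for each fixed $y\in C$ completes the argument. I expect the only real content to be this last comparison---that is, the bounded-below property of left multiplication by $e$, which is precisely where the order-unit hypothesis (rather than merely a local order unit) together with the closedness of $X$ is used; everything else is a direct translation through \eqref{mult id} and standard approximate-unit bookkeeping.
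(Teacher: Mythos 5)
Your proposal is correct and follows essentially the same route as the paper's proof: the same bridge identity $u_\lambda y-ey=e(u_\lambda\sbt y-y)$ via \eqref{mult id}, followed by the same two-sided comparison of $\|ew\|$ with $\|w\|_{\sbt}$ using \cref{equivalent norms} and the bounded-below property of multiplication by $e$ from \cref{order unit for closure}(ii). The paper merely makes the constants explicit (via the scaling factor $\|\Phi^{-1}\|$ from \cref{completely isometric} and \eqref{for plain x}) where you invoke an unspecified constant $K$, which changes nothing of substance.
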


\begin{proof}
Since $X$ is closed, $\Cstar_{\sbt}(X)=X$ as vector spaces, and so $C$ is also a subspace of $B$. From \cref{completely isometric}, we know the scaling factor for $e$ is $\|\Phi^{-1}\|$, and with this \eqref{for plain x} tells us $2\|\Phi^{-1}\|\|ex\|\geq \|x\|$ for any $x\in X$. Now fix $y\in C$ and $\lambda\in \Lambda$. Then $\|u_\lambda y-ey\|=\|e(u_\lambda{\sbt} y-y)\|$, and  
\begin{align*}
   \frac{\|u_\lambda{\sbt} y-y\|_{{\sbt}}}{2\|\Phi^{-1}\|^2}&\leq \frac{\|u_\lambda{\sbt} y-y\|}{2\|\Phi^{-1}\|}\\
   &\leq  \|e(u_\lambda{\sbt} y-y)\|\\ &\leq \|u_\lambda{\sbt} y-y\|\\
   &\leq \|u_\lambda{\sbt} y-y\|_{\sbt},
    \end{align*}
    where the last inequality follows from \cref{equivalent norms}.
The claim follows.
\end{proof}

Finally, 
when our local order unit $e$ lies in $X$, the triple $(X,\{\M_r(X)\cap \M_r(B)_+\}_r,e)$ forms an abstract operator system (see \cite[Chapter 13]{Pau02}), to which we can associate the enveloping $\Cstar$-algebra, denoted $\Cstar_{\text{min}}(X)$ or $\Cstar_{\text{env}}(X)$ (as in \cite{Ham79}). 
When $X$ is moreover closed, 	$\Cstar_{\sbt}(X)$ is $\Cstar_{\text{min}}(X)$.  

\begin{proposition}\label{envelope}
Let $B$ be a $\Cstar$-algebra. \begin{enumerate}[label=\textnormal{(\roman*)}]
\item Suppose $A$ is a unital $\Cstar$-algebra and $\theta\colon A\longrightarrow B$ is isometric and c.p.c.\ order zero. 
Then 
\[\Cstar_{\emph{min}}(\theta(A))\cong A\cong \Cstar_{\sbt}(\theta(A)).\]
\item If $X\subset B$ is a closed self-adjoint linear subspace with        order unit $e\in X\cap X'\cap B_+^1$ with scaling factor $R=1$ such that $X^2=eX$, then $\Cstar_{\sbt}(X)$ is the enveloping $\Cstar$-algebra of $(X,\{\M_r(X)\cap \M_r(B)_+\},e)$. 
\end{enumerate}
\end{proposition}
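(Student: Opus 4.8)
The plan is to reduce both statements to two standard facts about the enveloping $\Cstar$-algebra of an (abstract) operator system: that it is invariant under unital complete order isomorphism, and that on a unital $\Cstar$-algebra, viewed as an operator system with its canonical order unit, it returns the algebra itself (see \cite{Ham79}). Once $\theta(A)$ (resp.\ $X$) is identified, \emph{as an operator system}, with a unital $\Cstar$-algebra, both conclusions are immediate.

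For (i), I would first invoke \cref{a: isometric unitization}: an isometric c.p.\ order zero map is automatically a complete order embedding. As $\theta(1_A)=e$ is the distinguished order unit of $\theta(A)$, this makes $\theta\colon A\longrightarrow \theta(A)$ a unital complete order isomorphism of operator systems, whence $\Cstar_{\mathrm{min}}(\theta(A))\cong \Cstar_{\mathrm{min}}(A)\cong A$, the second isomorphism holding because $A$ is already a unital $\Cstar$-algebra. The remaining identification $A\cong \Cstar_{\sbt}(\theta(A))$ is not new: \cref{prop52.5}(ii) shows $(\theta(A),e)$ satisfies the hypotheses of \cref{theorem57}, and since $\theta$ is isometric (hence injective), \cref{X=Xbullet} shows $\eta=\id_{\theta(A)}\circ\theta$ is a $^*$-isomorphism onto $\Cstar_{\sbt}(\theta(A))$. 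Concatenating these gives the asserted chain.

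For (ii), the hypotheses are exactly those of \cref{completely isometric} with scaling factor $1$, so that result provides a complete order embedding $\Phi=\id_X\colon \Cstar_{\sbt}(X)\longrightarrow B$ with image $X$; moreover $e\in X$ is the unit of $\Cstar_{\sbt}(X)$ by \cref{theorem57}. I would then observe that $\Phi$ restricts to a unital complete order isomorphism from the unital $\Cstar$-algebra $\Cstar_{\sbt}(X)$ onto the operator system $(X,\{\M_r(X)\cap \M_r(B)_+\}_r,e)$ --- the matching of matrix orders being \cref{prop 61} (positivity in $B$ detects positivity in $\Cstar_{\sbt}(X)$) together with the complete isometry furnished by \cref{completely isometric}. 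Applying the two facts above as in (i) then canonically identifies the enveloping $\Cstar$-algebra of $X$ with $\Cstar_{\sbt}(X)$.

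The main obstacle is not any single computation but careful bookkeeping of operator-system structures: one must be sure that the matrix order $\{\M_r(X)\cap \M_r(B)_+\}_r$ inherited from $B$ coincides, under $\Phi$, with the intrinsic $\Cstar$-matrix order of $\Cstar_{\sbt}(X)$, and that $e$ genuinely serves as the (Archimedean) matrix order unit on both sides. These rest on \cref{prop 61} and, crucially, on the scaling factor being $1$, which is what upgrades ``bounded below'' to ``completely isometric'' via \cref{completely isometric}. Assembling these ingredients correctly, rather than proving anything genuinely new, is where the care lies.
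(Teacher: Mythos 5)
Your proposal is correct and follows essentially the same route as the paper: part (i) via \cref{a: isometric unitization} plus the universal property of the enveloping $\Cstar$-algebra and \cref{X=Xbullet}, and part (ii) by applying (i) to the complete order embedding $\Phi$ supplied by \cref{completely isometric}. You merely spell out the operator-system bookkeeping that the paper leaves implicit.
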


\begin{proof}
(i) Since $\theta$ is an isomorphic order zero map, it is a complete order embedding by \cref{a: isometric unitization}, and so the first isomorphism follows from the universal property of the minimal $\Cstar$-algebra associated to the abstract operator system $(\theta(A),\{\M_r(X)\cap \M_r(B)_+\}_r,\theta(1_A))$. The second isomorphism follows from \cref{X=Xbullet}. 

Now (ii) follows immediately from (i) and the fact that $\Cstar_{\sbt}(\Phi(\Cstar_{\sbt}(X)))\\ =\Cstar_{\sbt}(X)$.
\end{proof}

\bibliographystyle{plain}
\makeatletter\renewcommand\@biblabel[1]{[#1]}\makeatother
\bibliography{References2}

\end{document}